\newtheorem{theorem}{Theorem}
\newtheorem{proposition}{Proposition}
\newtheorem{lemma}{Lemma}
\newtheorem{ass}{Assumption}
\newtheorem{remark}{Remark}
\newtheorem{cor}{Corollary}
\newcommand\EE {\mathbb E}
\newcommand\NN {\mathbb N}
\newcommand\RR {\mathbb R}
\newcommand\PP {\mathbb P}
\newcommand\ZZ {\mathbb Z}
\def\bone{\mathbf{1}}
\def\qed{\hskip6pt\vrule height6pt width5pt depth1pt}
\def\qed{\hskip 6pt\vrule height6pt width5pt depth1pt}
\newcommand{\ed}{\end{document}}
\newcommand{\be}{\begin{equation}}
\newcommand{\ee}{\end{equation}}
\newcommand{\bq}{\begin{eqnarray}}
\newcommand{\eq}{\end{eqnarray}}
\newcommand{\mmod}{\mathrm{mod}}
\definecolor{Red}{rgb}{0.9,0,0.0}
\definecolor{Blue}{rgb}{0,0.0,1.0}
\begin{document}

\title{Consistency of MLE for partially observed diffusions, with application in market microstructure modeling}
\author{Sergey~Nadtochiy, and Yuan~Yin\footnote{The authors thank I. Cialenco for useful discussions and S. Jaimungal for providing the historical data used herein.}
\footnote{Partial support from the NSF CAREER grant 1855309 is acknowledged.}
\footnote{The data that support the findings of this study are available from the corresponding author upon reasonable request.}
\footnote{Address the correspondence to: Department of Applied Mathematics, Illinois Institute of Technology, 10 W. 32nd St., Chicago, IL 60616 (snadtochiy@iit.edu).}
}
\date{\today}

\maketitle

\begin{abstract}
This paper presents a tractable sufficient condition for the consistency of maximum likelihood estimators (MLEs) in partially observed diffusion models, stated in terms of stationary distribution of the associated fully observed diffusion, under the assumption that the set of unknown parameter values is finite. This sufficient condition is then verified in the context of a latent price model of market microstructure, yielding consistency of maximum likelihood estimators of the unknown parameters in this model. Finally, we compute the latter estimators using historical financial data taken from the NASDAQ exchange.
\end{abstract}



\section{Introduction}

This paper is concerned with the large-sample properties of maximum likelihood estimators (MLEs) for partially observed diffusions, as well as their application in market microstructure modeling. 
The need for such estimation arises when, in addition to the unobserved dynamic components, a partially observed dynamical system has parameters that are assumed to be constant but not known.
A fairly general form of a partially observed diffusion with unknown parameters is given by \eqref{eq.Background.dyn.X}--\eqref{eq.Background.dyn.Y}, and a more specific model, motivated by financial application, is discussed in Section \ref{se:latentprice}. 

The choice of MLE as an estimator is justified by its large-sample properties. It is well known (see, e.g., \cite{Engle}) that the MLE is consistent (i.e., it converges to the true parameter value as the sample size grows) and asymptotically efficient (i.e., its asymptotic variance attains the famous lower bound due to Cramer-Rao) in the cases where the observed sample consists of independent identically distributed (i.i.d.) random variables. In this sense, MLE has the best large-sample properties. Some of these asymptotic properties of MLE have been extended to the settings where the observations are not i.i.d. but are given by a sample path of a Markov process: see \cite{BagchiFullObs}, \cite{BagchiFullObs2}, and the references therein, for the case of fully observed diffusions with linear coefficients.
A general approach for analyzing the asymptotic properties of MLE (as well as of other statistical methods) is described, e.g., in \cite{IbrahimovKhasminskii} and \cite{KutoyantsBook}. For the partially observed discrete-time Markov chains, there exist general results on the consistency of MLE (see, e.g., \cite{RamonMLEPartObsDisc}, \cite{LaredoDisc}, \cite{BookMLEpartObsDisc}), but it is not straightforward to extend the discrete-time methods and results to continuous-time processes. To date, with the exception of the present work and to the best of our knowledge, there exists only one general result on the (large-sample) consistency of MLE for partially observed diffusions, stated in \cite{ChiganskiMLE}.

The above literature review is focused on the large-sample properties of MLE for partially observed Markov processes, as such is the focus of the present paper. However, it is worth mentioning the related (though different) results on the asymptotic properties of MLEs for Markov systems with vanishing noise (cf. \cite{KutoyantsBookSmallNoise}, \cite{KutoyantsSpecific}, \cite{LeGland}, and the references therein), on the large-sample properties of the so-called one-step MLE process (cf. \cite{LeCam}, \cite{KhasminskiKutoyantsSpecific}, and the references therein), and on the EM algorithm that can be used to efficiently compute MLEs (cf. \cite{DemboZeitouni}).

The analysis of MLE for partially observed Markov systems is intimately connected to the problem of stochastic filtering: i.e., the computation of conditional distribution of the unobserved components given the information available thus far -- this conditional distribution is referred to as the filter (see, e.g., \cite{BainCrisan}, and the references therein).
Indeed, one may express the likelihood function and MLE via the filter and the observed component and notice that the latter form a new (fully observed) Markov process. Then, one may try to apply the abstract results (see, e.g., \cite{IbrahimovKhasminskii}, \cite{KutoyantsBook}) to establish the desired consistency and asymptotic efficiency of MLE in this fully observed Markov system. This is precisely the approach taken in \cite{ChiganskiMLE}, which provides sufficient conditions for the consistency and asymptotic efficiency of MLE in partially observed finite-state continuous-time Markov models. 
However, with very few exceptions (e.g., Kalman-Bucy filter), the filter in a general nonlinear partially observed Markov system is not available in a closed form. Moreover, the joint dynamics of the (typically, infinite-dimensional) filter and the observation are convoluted and degenerate.
For this reason, the aforementioned approach based on the classical abstract results leads to assumptions that are difficult to verify in practice: e.g., stated in terms of the quantitative properties of the likelihood function or of the stationary distribution of the filter, for which there is no tractable characterization in general. The latter motivates Theorem \ref{thm:NY} whose assumptions are stated in terms of stationary distribution of the original (finite-dimensional) diffusion, which has a tractable characterization, e.g., via the associated elliptic partial differential equation (PDE).

The main theoretical contribution of this work is two-fold. First, we derive a tractable sufficient condition for the consistency of MLE for partially observed diffusions (Theorem \ref{thm:NY}, Section \ref{se:general}). Second, we verify this sufficient condition for a specific model motivated by a financial application (Theorem \ref{thm:main}, Section \ref{se:latentprice}). The sufficient condition of Theorem \ref{thm:NY} requires one to find an appropriate set of separating test functions and test processes, in order to distinguish between any two values of the unknown parameter. The main technical effort of this paper is in the proof of Theorem \ref{thm:main}, where we illustrate how to choose such test functions and test processes, and how to verify that they possess the desired separating property, in the context of a latent price model of market microstructure. This verification is based on a careful asymptotic analysis of certain functionals of the associated diffusion (see Proposition \ref{prop:LatentPrice.verify}). We believe that the methods presented in the proof of Theorem \ref{thm:main} can be applied to other models of interest, and we discuss the general approach to verifying the conditions of Theorem \ref{thm:NY} in the discussion following the latter theorem. It is worth mentioning that Theorem \ref{thm:NY} requires the set of the unknown parameter values to be finite. A relaxation of this assumption (via an appropriate continuity assumption on the diffusion coefficients) is the subject of ongoing research in \cite{EkrenNadtochiy}, which uses the finite-parameter result proved herein as an important building block.

The theoretical results developed in the present paper are motivated by a specific financial application. It is a notorious problem in market microstructure modeling to find the right notion of price for a given asset, that would be appropriate in a setting where the observations are made at a very high (time) frequency. For example, the quoted bid and ask prices of even the most liquid stocks change only once in several seconds, whereas other relevant events in the limit order book occur on a scale of milliseconds. Thus, any notion of a price based on the observed bid and ask quotes (e.g., the midprice, or the last transaction price) would remain constant most of the time and would not reflect with sufficient accuracy the consensus of market participants about the true value of the asset. As a result, it is very challenging to define and estimate the volatility of midprice (or of the last transaction price) in a high-frequency regime. The fact that the changes of midprice and the changes of other relevant market factors occur on two very different time scales also makes it difficult to build a predictive model for the future price. In particular, it is challenging to estimate the price impact (i.e., the expected change in the price caused by a purchase or a sale of one unit of the asset) using high frequency observations. Indeed, it takes a relatively large number of trades to move the quoted prices (which stay constant most of the time), hence the standard linear regression model for the two does not work well in the high frequency regime. This issue is documented in \cite{Cont}, \cite{Livieri}, whose empirical results indicate that the estimation of expected impact of trades on midprice only works well if the observations are taken at a fairly low frequency (tens of seconds), which means that a large fraction of observations have to be ignored in such analysis.
To address these challenges, and to obtain a ``more dynamic" notion of price, the latent price models are used. Namely, instead of modeling the quoted prices directly, one assumes the existence of an unobserved (latent) price process, which changes at a high frequency, such that the quoted (observed) prices are functions (e.g., roundings to the nearest integer number of cents) of the latent price: see, e.g., \cite{Rosenbaum}, and the references therein. The advantage of a latent price model is that the price and other relevant market factors evolve on the same time scale. The main disadvantage is that the true price is hidden, which leads to a system with partial observations.
The latent price model analyzed herein is a version of the model proposed in \cite{N} to explain the concavity of expected price impact of a large sequence of co-directed trades -- a phenomenon that has received much attention from both practitioners and academics (see the discussion and references in \cite{N}).
This model is a mixture of the uncertainty-zone model of \cite{Rosenbaum} and a linear price impact model (see, e.g., \cite{AlmgrenChriss}), but it also prescribes an additional micro-drift for the latent price process (which is responsible for the aforementioned concavity of price impact). A more detailed description of the proposed model is presented at the beginning of Section \ref{se:latentprice}, which also contains the proof of consistency of MLE in this model (via verification of the conditions of Theorem \ref{thm:NY}). The empirical analysis described in Section \ref{se:implem} shows how to compute MLE in the proposed latent price model numerically, using historical financial data. The results of this analysis, in particular, provide high-frequency estimates of the price impact coefficient and of the volatility of latent price. In addition, these empirical results support the presence of the aforementioned micro-drift in the latent price process (i.e., the estimated magnitude of the micro-drift is non-negligible), which validates the main novel feature of the proposed model.

\section{Consistency of MLE for partially observed diffusions}
\label{se:general}

Consider a diffusion process $(X,Y)$ evolving in $\RR^{q+m}$ according to
\begin{align}
& dX_t = b(\theta;X_t) dt + \sigma(\theta;X_t) dB_t,\label{eq.Background.dyn.X}\\
& dY_t = h(\theta;X_t) dt + \bar\sigma dW_t,\label{eq.Background.dyn.Y}
\end{align}
where $(B,W)$ is a $\RR^{d+m}$-valued Brownian motion, with possibly correlated components: namely, $(B,W)$ is a continuous process with stationary, independent and centered increments, and the components of $W$ are independent.\footnote{The large-sample consistency of MLE, established in Theorem \ref{thm:NY}, holds for more general (even non-Markovian) processes $X$, as long they satisfy an appropriate ergodicity property.} We assume that the initial condition $X_0$ is independent of $(B,W)$, that the unknown parameter $\theta$ belongs to a given set $\Theta$, that the matrix $\bar\sigma\in \RR^{m\times m}$ is invertible, and that $b(\theta;\cdot),h(\theta;\cdot),\sigma(\theta;\cdot)$ are Borel functions (vector- and matrix-valued, respectively). W.l.o.g., we assume that $Y_0=0$. We also assume that there exists a weak solution to \eqref{eq.Background.dyn.X}--\eqref{eq.Background.dyn.Y} that is unique in law.

The component $Y$ is observed while $X$ is not. Our goal is to estimate the true value of the unknown parameter $\theta$ using the observation $Y_{[0,T]}$. More specifically, we are interested in MLE due to its expected efficiency as $T\rightarrow\infty$ (i.e., as the sample size grows). MLE is constructed by choosing a reference measure $\PP_T$ that dominates the law of $Y$ on the canonical space $C([0,T],\RR^m)$, denoted $\PP^\theta_T$, and by computing the \emph{likelihood} function which is defined as the Radon-Nikodym derivative $d\PP^\theta_T/d\PP_T$ evaluated on the observed path $Y_{[0,T]}$.\footnote{If one chooses two different dominating measures, so that one of them also dominates the other, the MLE under both measures remains the same.}
Under natural integrability assumptions on $h(\theta;\cdot)$ (e.g., if $h(\theta;\cdot)$ is bounded), Girsanov's theorem implies that $\PP^\theta_T$ is equivalent to the distribution of $\bar\sigma W$ on $C([0,T],\RR^m)$, denoted $\PP_T$.\footnote{If $W$ is a standard Brownian motion in $\RR^m$, then $\PP_T$ is a scaled Wiener measure.} Using the classical results on innovation processes (see, e.g., \cite{LipsterShiryaev}), one easily derives the following formula for the likelihood: for any $\theta\in\Theta$,
\begin{align}\label{eq.Background.L.first.def}
L_T(\theta):=\frac{d\PP^\theta_T}{d\PP_T} = \exp\left(-\frac{1}{2}\int_0^T \left|\bar\sigma^{-1}h^{\theta}_t\right|^2 dt
+ \int_0^T \left(\bar\sigma^{-1}h^{\theta}_t\right)^{\top}\, \bar\sigma^{-1}dY_t \right),
\quad h^\theta_t:=\tilde\EE^{\theta}\left(h(\theta;X_t)\,\vert\,\mathcal{F}^Y_t\right),
\end{align}
where $\tilde\EE^\theta$ is the expectation under $\tilde\PP^\theta$, the law of $(X,Y)$ on $C([0,\infty),\RR^{q+m})$ assuming the parameter value $\theta$ in \eqref{eq.Background.dyn.X}--\eqref{eq.Background.dyn.Y}. Note that $\PP^\theta$, the law of $Y$ on $C([0,\infty),\RR^{m})$, is the second marginal of $\tilde\PP^\theta$.

MLE is defined naturally as
\begin{align*}
&\hat{\theta}_T \in \text{argmax}_{\theta\in\Theta} L_T(\theta).
\end{align*}

As discussed in the introduction, MLE is known to possess the best large-sample properties in the case of i.i.d. observations. However, much less is known about its asymptotic properties in the present setting. The following theorem provides a tractable sufficient condition for the consistency of MLE in the above setting of partially observed diffusion model.

\begin{theorem}\label{thm:NY}
Assume that $\Theta$ is finite and that each $h(\theta;\cdot)$ is absolutely bounded. Assume also that there exist an integer $k\geq1$, a set of indices $\Lambda$, and a family of test processes $\{Y^\lambda\}_{\lambda\in\Lambda}$, each taking values in $\RR^{k}$ and adapted to $\mathbb{F}^Y$, s.t.
\begin{itemize}
\item $(h(X_t),Y^\lambda_t)$ is strongly ergodic under each $\tilde \PP^\theta$: i.e., for each $\theta\in\Theta$ there exists a measure $\nu^{\theta,\lambda}$ on $\RR^{m}\times\RR^{k}$ s.t.
\begin{align*}
\int_{\RR^{m}\times\RR^k}\phi(z,y) \nu^{\theta,\lambda}(dz,dy) = \lim_{T\rightarrow\infty} \frac{1}{T} \int_0^T \phi(h(X_t),Y^\lambda_t) dt,\quad \tilde\PP^\theta\text{-a.s.},
\end{align*} 
for any bounded Borel $\phi:\RR^{m}\times\RR^k\rightarrow\RR $;
\item for any $\theta,\theta'\in\Theta$, the equality
\begin{align}\label{eq.Background.nu.sep}
\int_{\RR^{m}\times\RR^k} z\cdot f(y)\, \nu^{\theta,\lambda}(dz,dy)=\int_{\RR^{m}\times\RR^k} z\cdot f(y)\, \nu^{\theta',\lambda}(dz,dy)
\quad \forall \,\lambda\in\Lambda\, \text{and bounded Borel }f:\RR^k\rightarrow\RR^m
\end{align}
implies $\theta=\theta'$.
\end{itemize}
Then, for any $\theta\in\Theta$, as $T\rightarrow\infty$, any MLE $\hat\theta_T$ converges to $\theta$ in probability (and a.s.) under $\PP^\theta$.
\end{theorem}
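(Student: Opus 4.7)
The plan is to show that for each true value $\theta^*\in\Theta$ and each alternative $\theta\in\Theta\setminus\{\theta^*\}$, the log-likelihood ratio $\log L_T(\theta^*)-\log L_T(\theta)\to+\infty$ as $T\to\infty$, $\tilde\PP^{\theta^*}$-a.s. Since $\Theta$ is finite, a union bound then forces any MLE $\hat\theta_T$ to equal $\theta^*$ for all large $T$ a.s., yielding both a.s.\ and in-probability consistency. Under $\tilde\PP^{\theta^*}$ the standard innovation representation gives $dY_t=h^{\theta^*}_t\,dt+\bar\sigma\,d\bar W_t$ for a $(\tilde\PP^{\theta^*},\mathbb F^Y)$-Brownian motion $\bar W$; substituting this into \eqref{eq.Background.L.first.def} and completing the square yields
\begin{align*}
\log L_T(\theta^*)-\log L_T(\theta)=\tfrac12\int_0^T\bigl|\bar\sigma^{-1}(h^{\theta^*}_t-h^\theta_t)\bigr|^2\,dt+N^{\theta,\theta^*}_T,
\end{align*}
where $N^{\theta,\theta^*}_T:=\int_0^T\bigl(\bar\sigma^{-1}(h^{\theta^*}_t-h^\theta_t)\bigr)^\top d\bar W_t$ is a $\tilde\PP^{\theta^*}$-martingale. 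Since $h(\theta;\cdot)$ is uniformly bounded, so is the filter $h^\theta$, hence $[N^{\theta,\theta^*}]_T\le CT$, and the SLLN for continuous martingales delivers $N^{\theta,\theta^*}_T/T\to 0$ a.s. The proof therefore reduces to showing
\begin{align*}
\liminf_{T\to\infty}\frac{1}{T}\int_0^T|h^{\theta^*}_t-h^\theta_t|^2\,dt>0,\qquad\tilde\PP^{\theta^*}\text{-a.s.}
\end{align*}

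To leverage the separating condition, fix any $\lambda\in\Lambda$ and any bounded Borel $f:\RR^k\to\RR^m$ and apply Cauchy--Schwarz in time:
\begin{align*}
\frac{1}{T}\int_0^T|h^{\theta^*}_t-h^\theta_t|^2\,dt\;\ge\;\|f\|_\infty^{-2}\Bigl(\frac{1}{T}\int_0^T(h^{\theta^*}_t-h^\theta_t)\cdot f(Y^\lambda_t)\,dt\Bigr)^2.
\end{align*}
Hence it suffices to exhibit $(\lambda,f)$ for which the inner time-average has a nonzero a.s.\ limit under $\tilde\PP^{\theta^*}$. Splitting the integrand as $h^{\theta^*}_t\cdot f(Y^\lambda_t)-h^\theta_t\cdot f(Y^\lambda_t)$, the first piece is handled cleanly: the tower property gives $\tilde\EE^{\theta^*}[h^{\theta^*}_tf(Y^\lambda_t)]=\tilde\EE^{\theta^*}[h(\theta^*;X_t)f(Y^\lambda_t)]$, the strong-ergodicity hypothesis applied to $\phi(z,y)=z\cdot f(y)$ yields $\frac{1}{T}\int_0^T h(\theta^*;X_t)\cdot f(Y^\lambda_t)\,dt\to A^{\theta^*,\lambda}(f):=\int z\cdot f(y)\,\nu^{\theta^*,\lambda}(dz,dy)$ a.s., and a covariance-decay estimate exploiting the conditional centering $\tilde\EE^{\theta^*}[h(\theta^*;X_t)-h^{\theta^*}_t\mid\mathcal F^Y_t]=0$ transfers this a.s.\ limit to $\frac{1}{T}\int_0^T h^{\theta^*}_t\cdot f(Y^\lambda_t)\,dt$.

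The heart of the proof, and the main obstacle, is the analogous identification under the \emph{wrong} measure $\tilde\PP^{\theta^*}$ of
\begin{align*}
\frac{1}{T}\int_0^T h^\theta_t\cdot f(Y^\lambda_t)\,dt\;\longrightarrow\;A^{\theta,\lambda}(f),\qquad\tilde\PP^{\theta^*}\text{-a.s.,}
\end{align*}
where $h^\theta_t=H^\theta_t(Y_{[0,t]})$ is a non-stationary functional of the growing observation path defined through the $\theta$-filtering equations, while the law of $Y$ is dictated by $\theta^*$. I plan to handle this step by combining (i) stability of the filter functional $H^\theta_t$ with respect to its initialization (so that $h^\theta_t$ can be approximated by a shift-stationary functional of $Y$ modulo errors that vanish in the time average), (ii) the strong-ergodicity hypothesis applied jointly to this stationary approximation and to $Y^\lambda$ under $\tilde\PP^{\theta^*}$, and (iii) a change-of-measure/tower-property computation that identifies the resulting limit with $A^{\theta,\lambda}(f)$. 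Granted the two limits, the separating assumption \eqref{eq.Background.nu.sep} supplies, for each $\theta\ne\theta^*$, a choice of $(\lambda,f)$ with $A^{\theta^*,\lambda}(f)\ne A^{\theta,\lambda}(f)$; the inner time-average then has a strictly nonzero a.s.\ limit, the Cauchy--Schwarz lower bound is strictly positive a.s., and consistency follows.
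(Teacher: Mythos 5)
Your overall strategy---show that $\log L_T(\theta^*)-\log L_T(\theta)\to\infty$ by bounding $\frac1T\int_0^T|h^{\theta^*}_t-h^\theta_t|^2\,dt$ away from zero via Cauchy--Schwarz against $f(Y^\lambda_t)$---is close in spirit to the paper's, but there is a genuine gap at exactly the step you call the heart of the proof. The claim that $\frac1T\int_0^T h^\theta_t\cdot f(Y^\lambda_t)\,dt\to A^{\theta,\lambda}(f)$ holds $\tilde\PP^{\theta^*}$-a.s.\ is not established by your three-part plan, and as stated it is dubious: $\nu^{\theta,\lambda}$ is the invariant law of $(h(\theta;X),Y^\lambda)$ under $\tilde\PP^{\theta}$, whereas under $\tilde\PP^{\theta^*}$ the quantity $h^\theta_t$ is a misspecified filter driven by observations with a different law, and its time averages (if they converge at all) are governed by a different limit. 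The change of measure you invoke cannot identify the limit, because if the theorem is true then $M_t:=L_t(\theta)/L_t(\theta^*)\to0$ $\PP^{\theta^*}$-a.s., i.e.\ the laws of $Y$ become mutually singular; a.s.\ statements do not transfer between them, and $M_t^{-1}=d\PP^{\theta^*}_{[0,t]}/d\PP^{\theta}_{[0,t]}$ is not uniformly integrable under $\PP^\theta$, so $\lim_t\EE^{\theta^*}\left[h^\theta_t f(Y^\lambda_t)\right]=\lim_t\tilde\EE^{\theta}\left[M_t^{-1}h(\theta;X_t)f(Y^\lambda_t)\right]$ need not equal $A^{\theta,\lambda}(f)$. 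A secondary gap of the same nature occurs in your ``first piece'': transferring the a.s.\ ergodic limit from $\frac1T\int_0^T h(\theta^*;X_t)\cdot f(Y^\lambda_t)\,dt$ to $\frac1T\int_0^T h^{\theta^*}_t\cdot f(Y^\lambda_t)\,dt$ requires a covariance-decay (mixing-rate) estimate for the conditionally centered differences, which is not among the hypotheses --- only strong ergodicity, with no rate, is assumed.

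The paper's Proposition \ref{prop:main} circumvents both obstructions by arguing by contradiction and working with expectations rather than a.s.\ limits. Assuming the likelihood ratio does \emph{not} vanish on an event $A=\{\xi\ge\epsilon\}$ of positive probability, all time averages are localized to $A_t=\{M_t\ge\epsilon\}$, on which $M_t^{-1}\bone_{A_t}\le\epsilon^{-1}$ is bounded and converges by supermartingale convergence; the tower property makes the substitution of $h(\theta;X_t)$ for the filter $h^{\theta}_t$ exact inside expectations, and the change of measure legitimately converts $\EE^{\theta^*}\left(\bone_{A_t}f(Y^\lambda_t)\cdot h^{\theta}_t\right)$ into $\tilde\EE^{\theta}\left(M_t^{-1}\bone_{A_t}f(Y^\lambda_t)\cdot h(\theta;X_t)\right)$, to which the ergodicity hypothesis applies directly under $\tilde\PP^{\theta}$; the normalizing constants on the two sides both equal $\PP^{\theta^*}(A)>0$, so \eqref{eq.Background.nu.sep} is forced and the separating assumption yields the contradiction. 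If you want to salvage your direct approach, you would need to either prove filter stability and identify the correct invariant measure of the misspecified filtering pair under $\tilde\PP^{\theta^*}$ (a substantial and model-dependent undertaking not supported by the stated hypotheses), or adopt the paper's localization to $\{M_t\ge\epsilon\}$ --- at which point you have essentially reconstructed the paper's argument.
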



\smallskip

Before we prove Theorem \ref{thm:NY}, let us comment on the practical value of this result. First, it is clear that the consistency of MLE implies the identifiability of $\theta$: i.e., for any $\theta\neq\theta'$, the two measures $\PP^{\theta}$ and $\PP^{\theta'}$ are mutually singular. The latter is interpreted as the feasibility of the problem of estimating the true value of $\theta$ with certainty from a sample with infinite time horizon. If the set $\Theta$ is finite, it is not hard to show that the identifiability of $\theta$ yields Proposition \ref{prop:main} and in turn the consistency of MLE. Thus, the statement of Theorem \ref{thm:NY} is equivalent to saying that, under the assumptions of this theorem, $\theta$ is identifiable. Moreover, since the notion of identifiability does not depend on whether $\Theta$ is finite or not, we conclude that the conditions of Theorem \ref{thm:NY}, without the assumption that $\Theta$ is finite, yield the identifiability of $\theta$. 

On the one hand, the assumption of finite $\Theta$ may not be too restrictive in practice, in the sense that any numerical algorithm for the computation of MLE is automatically restricted to take a value in a finite set (e.g., determined by the machine precision). On the other hand, in order to give a practical meaning to the statement that the MLE $\hat\theta_t$ converges to $\theta$ in probability under $\PP^\theta$, one needs to assume that the ``true" distribution of the observation corresponds to one of the values of $\theta$ restricted to a chosen finite set. The latter may or may not be natural, depending on a specific model/application. In general, it is certainly valuable to extend Theorem \ref{thm:NY} to a setting with a general (in particular, infinite) set $\Theta$. Such an extension is the subject of the ongoing investigation in \cite{EkrenNadtochiy}, which aims to prove that the (abstract) identifiability implies the consistency of MLE in partially observed diffusion models on a torus, for a general parameter set $\Theta$. 

\smallskip

The verification of the conditions of Theorem \ref{thm:NY} may require some work, but it is not unrealistic as illustrated by the present discussion and in Section \ref{se:pf.Thm.main}. First, one needs to choose a family of test processes $\{Y^\lambda\}_{\lambda\in\Lambda}$, adapted to $\mathbb{F}^Y$, s.t. $(h(X),Y^\lambda)$ is strongly ergodic. Note that $h(X)$ is ergodic as long as $h(\theta,\cdot)$ is bounded and either $X$ is ergodic or $(b,\sigma,h)$ are periodic and $\sigma\sigma^\top$ is strictly positive definite. Then, one can ensure the ergodicity of $(h(X),Y^\lambda)$ by choosing $Y^\lambda$ so that it forms a Markov system with $X$ and is strongly recurrent. We propose to define $Y^\lambda$ as follows:
\begin{align}\label{eq.sec2.Ylambda.def1}
dY^{\lambda}_t = -\lambda Y^{\lambda}_t dt + dY_t,\quad Y^{\lambda}_0=0,
\quad Y^\lambda_t = \int_0^t e^{-\lambda(t-s)}\,dY_s.
\end{align}
for all $\lambda\in(0,\infty)=:\Lambda$, as it is done in Section \ref{se:pf.Thm.main}.

The main challenge of using Theorem \ref{thm:NY} is in verifying its second condition, \eqref{eq.Background.nu.sep}. 
Notice, however, that the condition \eqref{eq.Background.nu.sep} is stated in terms of the invariant measure of the {\bf fully observed} system $(h(X),Y^\lambda)$, which is a probability measure on a finite-dimensional state space: e.g., the density of $\nu^{\theta,\lambda}$ is a solution to an elliptic PDE, whose coefficients are given explicitly in terms of $h,b,\bar\sigma,\sigma$. One may use the aforementioned PDE to obtain estimates or numerical approximations of the integrals in \eqref{eq.Background.nu.sep} and to verify the second condition of Theorem \ref{thm:NY}, provided the set $\Theta$ is not too large. In addition, one may combine the probabilistic and PDE tools to verify  \eqref{eq.Background.nu.sep} without explicitly computing, or approximating numerically, the quantities involved in \eqref{eq.Background.nu.sep}. Below, we consider two examples of families of partially observed diffusion models in which such a verification is possible (in both examples, $X$ is assumed to be ergodic, although, if $(b,\sigma,h)$ are periodic and $\sigma\sigma^\top$ is strictly positive definite, one may instead use the ergodicity of the canonical projection of $X$ on the associated torus).

\smallskip

In the first example, we assume that $\theta$ is a vector in $\RR^m$, that $(b,\sigma)$ do not depend on $\theta$, and that every $h_i$ depends on $\theta$ only through $\theta_i$. Then, we do not need to utilize $Y^\lambda$, and we show that, under certain regularity and monotonicity assumptions, the equation \eqref{eq.Background.nu.sep} with $f\equiv1$ yields $\theta=\theta'$. To this end, we notice that the left hand side of \eqref{eq.Background.nu.sep} with $f\equiv1$ can be written in terms of the invariant density $p$ of $X$ under $\tilde\PP^\theta$ as $\int_{\RR^q}h(\theta,\cdot)p\,dx$ (note that $p$ does not depend on $\theta$). Then, assuming that, for every $i$, the function $\theta_i\mapsto h_i(\theta_i,\cdot)$ is differentiable\footnote{Although $\Theta$ is finite, in this example, we treat $h_i(\theta_i,\cdot)$ as a function of $\theta_i\in\RR$.}, that its derivative is uniformly integrable in $x$ (w.r.t. $p\,dx$) and uniformly continuous in $\theta_i$, and that the function $x\mapsto\partial_{\theta_i}h(\theta_i,x)$ is either strictly positive or strictly negative, we deduce that the Jacobian of the $C^1$ mapping $\theta\mapsto \int_{\RR^q}h(\theta,\cdot)p\,dx$ is invertible at each $\theta$. Since $\Theta$ is finite, we can restrict $\theta$ to a compact set and conclude that the mapping $\theta\mapsto \int_{\RR^q}h(\theta,\cdot)p\,dx$ is bijective. The latter means that \eqref{eq.Background.nu.sep} with $f\equiv1$ yields $\theta=\theta'$.

\smallskip

In the second example, we assume that $X$ that $W$ is uncorrelated with $B$\footnote{Note that this assumption is not satisfied in the model considered in Section \ref{se:latentprice}, which is the main reason why its analysis is more complicated.}, that $Y$ is one-dimensional, 
and that $\bar\sigma=1$. Then, the left hand side of \eqref{eq.Background.nu.sep}, with $Y^\lambda$ given by \eqref{eq.sec2.Ylambda.def1} and with $f(y)=y^j$, is equal to 
\begin{align*}
& \lim_{t\rightarrow\infty}\tilde\EE^\theta \left[h(\theta,X_t) (Y^\lambda_t)^j\right]
= \sum_{i=0}^j \lim_{t\rightarrow\infty}\tilde\EE^\theta \left[h(\theta,X_t)\, (H_t(\theta))^i\right]\,\tilde\EE K_t^{j-i},
\end{align*}
where
\begin{align*}
& H_t(\theta):=\int_0^t e^{-\lambda(t-s)}\,h(\theta,X_s)\,ds,\,\,K_t:=\int_0^t e^{-\lambda(t-s)}\,dW_s,
\quad \lim_{\lambda\rightarrow\infty}\lambda H_t(\theta)=h(\theta,X_t).
\end{align*}
Assuming that one can justify the interchanging of the above limits in $t$ and $\lambda$ (and interchange both of them with $\tilde\EE^\theta$), one can deduce from the above (used recursively, for $j=0,1,\ldots$) that the equality in \eqref{eq.Background.nu.sep} yields
\begin{align*}
\lim_{t\rightarrow\infty}\EE^\theta (h(\theta,X_t))^j = \lim_{t\rightarrow\infty}\EE^{\theta'} (h(\theta',X_t))^j,\quad j=0,1,\ldots
\end{align*}
Next, we recall that, in many cases, the sequence of all moments determines a distribution uniquely. Once the above program is completed (Section \ref{se:pf.Thm.main} shows how to implement this strategy in an even more complicated setting where $W$ is correlated with $B$), we conclude that, in order to verify the second condition of Theorem \ref{thm:NY}, it suffices to establish the following: the invariant distribution of $h(\theta,X)$ under $\tilde\PP^\theta$ determines $\theta$ uniquely. The latter is a natural and tractable condition: in fact, this is a typical sufficient condition for the identifiability of $\theta$ in the fully observed diffusion models (and, clearly, a partially observed model is not expected to have more tractable sufficient conditions for identifiability than the associated fully observed model).
Let us show how the claim ``invariant distribution of $h(\theta,X)$ under $\tilde\PP^\theta$ determines $\theta$ uniquely" can be verified in certain classes of diffusions models. Namely, we assume that $h$ does not depend on $\theta$ and that the mapping $x\mapsto h(x)$ is invertible. Then, the desired claim reduces to the statement that the invariant distribution of $X$ under $\tilde\PP^\theta$ determines $\theta$ uniquely. To verify the latter, we recall that the invariant density $p^\theta$ of $X$ under $\tilde\PP^\theta$ satisfies the elliptic PDE
\begin{align*}
& \frac{1}{2}\sum_{i,j=1}^q \partial_{x_ix_j}\left[a_{ij}p^\theta\right] - \sum_{i=1}^q\partial_{x_i}\left[b_{i}p^\theta\right]=0,
\quad a:=\sigma\sigma^\top.
\end{align*}
Arguing by contradiction and assuming that there exist $\theta\neq\theta'$ with $p^\theta=p^{\theta'}=:p$, we obtain
\begin{align*}
& \frac{1}{2}\sum_{i,j=1}^q \partial_{x_ix_j}\left[(a_{ij}(\theta,\cdot)-a_{ij}(\theta',\cdot))p\right] - \sum_{i=1}^q\partial_{x_i}\left[(b_{i}(\theta,\cdot)-b_i(\theta',\cdot))p\right]=0.
\end{align*}
There exist various ways to deduce a contradiction with the above, in specific models in which the identifiability is natural to expect. To provide a simple explicit example, we assume that $q=1$, that $\theta\in\RR$, and that the drift $b$ has the property that, for any $\theta\neq\theta'$, the function $x\mapsto b(\theta,x)-b_i(\theta',x)$ is strictly positive or strictly negative. Then, multiplying the above display by $x$ and integrating, we obtain $\int_{\RR}(b(\theta,\cdot)-b(\theta',\cdot))\,p\,dx=0$, which yields a desired contradiction.

\smallskip

\begin{remark}\label{rem:compareToDiscrete}
Note that the continuous-time setting used herein, in fact, does not complicate the analysis as compared to the discrete-time setting. On contrary, it turns out that the diffusion models naturally lead to a more convenient choice of a reference measure with respect to which the likelihood is computed (i.e., a measure with respect to which the density of the observation is computed). The existing results on discrete-time Markov processes (see the introduction for a list of such results) express the sufficient condition for consistency in terms of the transition kernel of the Markov process, which naturally leads to the (product) Lebesgue measure as a reference measure for the likelihood. We believe that the sufficient condition of Theorem \ref{thm:NY} is more tractable than the existing conditions for discrete-time Markov processes (and the arguments in our proof are simpler), because the continuous-time framework leads us naturally to the (scaled) Wiener measure as a reference measure for the likelihood, which turns out to be more convenient for analyzing the consistency of MLE.
\end{remark}


\medskip

In the remainder of this section, we prove Theorem \ref{thm:NY}.
It is well known (see, e.g., \cite{IbrahimovKhasminskii}) that the consistency of $\hat\theta$ can be deduced from the following proposition.

\begin{proposition}\label{prop:main}
Under the assumptions of the theorem, for any distinct $\theta,\theta'\in\Theta$, we have $\lim_{T\rightarrow\infty}L_T(\theta')/L_T(\theta)= 0$, $\PP^\theta$-a.s..
\end{proposition}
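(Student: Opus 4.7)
The plan is to express $\log(L_T(\theta')/L_T(\theta))$ under $\tilde\PP^\theta$ via the $\tilde\PP^\theta$-innovation $N^\theta$, which is a standard $\mathbb{F}^Y$-Brownian motion defined by $\bar\sigma\,dN^\theta_t = dY_t - h^\theta_t\,dt$. A direct computation using \eqref{eq.Background.L.first.def} gives
\[
\log\frac{L_T(\theta')}{L_T(\theta)} \;=\; -\tfrac12 \int_0^T |g_t|^2\,dt + \int_0^T g_t^\top\,dN^\theta_t, \qquad g_t := \bar\sigma^{-1}\bigl(h^{\theta'}_t - h^\theta_t\bigr).
\]
By Dambis--Dubins--Schwarz and the law of the iterated logarithm, the stochastic integral above is $o\bigl(\int_0^T |g_t|^2\,dt\bigr)$ on $\bigl\{\int_0^\infty |g_t|^2\,dt = \infty\bigr\}$, so it suffices to prove $[M]_T := \int_0^T |g_t|^2\,dt \to \infty$ $\tilde\PP^\theta$-almost surely. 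For any $\lambda \in \Lambda$ and bounded Borel $f\colon\RR^k\to\RR^m$, Cauchy--Schwarz applied with the bounded test integrand $\xi_t := \bar\sigma^\top f(Y^\lambda_t)$ yields
\[
\frac{[M]_T}{T} \;\geq\; \frac{1}{C(\bar\sigma,f)}\left(\frac{1}{T}\int_0^T \bigl(h^{\theta'}_t - h^\theta_t\bigr)\cdot f(Y^\lambda_t)\,dt\right)^2,
\]
reducing the task to showing that the right-hand side has strictly positive liminf for some admissible $\lambda, f$.

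To identify the limits of the two time averages on the right, I would exploit the two semimartingale decompositions of $Y$ under $\tilde\PP^\theta$: the original one $dY_t = h(\theta;X_t)\,dt + \bar\sigma\,dW_t$ and the innovation form $dY_t = h^\theta_t\,dt + \bar\sigma\,dN^\theta_t$. Equating the two expressions for $\int_0^T f(Y^\lambda_t)\cdot dY_t$ and invoking the martingale strong law of large numbers for the $\tilde\PP^\theta$-martingales $\int f\cdot\bar\sigma\,dW$ and $\int f\cdot\bar\sigma\,dN^\theta$ (each with quadratic variation $O(T)$) gives
\[
\frac{1}{T}\int_0^T h^\theta_t\cdot f(Y^\lambda_t)\,dt = \frac{1}{T}\int_0^T h(\theta;X_t)\cdot f(Y^\lambda_t)\,dt + o(1) \;\longrightarrow\; I_\theta := \int z\cdot f(y)\,\nu^{\theta,\lambda}(dz,dy)
\]
$\tilde\PP^\theta$-a.s., where the final limit uses the ergodicity hypothesis (first bullet of Theorem \ref{thm:NY}). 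The identical argument carried out under $\tilde\PP^{\theta'}$ gives $\frac{1}{T}\int_0^T h^{\theta'}_t\cdot f(Y^\lambda_t)\,dt \to I_{\theta'} := \int z\cdot f(y)\,\nu^{\theta',\lambda}(dz,dy)$ $\tilde\PP^{\theta'}$-a.s.

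Suppose for contradiction that $A := \{[M]_\infty < \infty\}$ has $\tilde\PP^\theta(A)>0$. On $A$ the positive $\tilde\PP^\theta$-martingale $L_T(\theta')/L_T(\theta)$ converges to a finite, strictly positive limit $L_\infty$, and the Fatou-type bound $\tilde\PP^{\theta'}(B)\geq\tilde\EE^\theta[L_\infty\mathbf 1_B]$ for $B\in\mathcal{F}^Y_\infty$ shows that any $\tilde\PP^{\theta'}$-null set $E^c$ satisfies $L_\infty\mathbf 1_{E^c}=0$ $\tilde\PP^\theta$-a.s., hence $\tilde\PP^\theta(E^c\cap A)=0$. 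In particular, the $\tilde\PP^{\theta'}$-a.s. convergence of $\frac{1}{T}\int_0^T h^{\theta'}_t\cdot f(Y^\lambda_t)\,dt$ to $I_{\theta'}$ transfers to $\tilde\PP^\theta$-a.s. convergence on $A$. Combined with the Cauchy--Schwarz bound from the first paragraph (which, together with bounded $[M]_\infty$, forces $\frac{1}{T}\int_0^T(h^{\theta'}_t - h^\theta_t)\cdot f(Y^\lambda_t)\,dt \to 0$ on $A$) and the $I_\theta$ limit already established for the $\theta$-filter's time average, this forces $I_\theta = I_{\theta'}$ on $A$ for every admissible $\lambda, f$. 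The second bullet of Theorem \ref{thm:NY} then yields $\theta = \theta'$, contradicting our hypothesis; hence $\tilde\PP^\theta(A)=0$, $[M]_T\to\infty$ $\tilde\PP^\theta$-a.s., and $L_T(\theta')/L_T(\theta) \to 0$. The most delicate step is the measure-transfer argument that carries the $\tilde\PP^{\theta'}$-a.s. identification of the $\theta'$-filter time average over to a $\tilde\PP^\theta$-a.s. statement on $A$; it rests on strict positivity of $L_\infty$ there and on $L_\infty$ serving as the Radon--Nikodym density of the absolutely continuous part of $\tilde\PP^{\theta'}|_{\mathcal F^Y_\infty}$ with respect to $\tilde\PP^\theta|_{\mathcal F^Y_\infty}$.
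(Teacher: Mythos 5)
Your proof is correct, and while it follows the same contradiction skeleton as the paper's --- reduce to showing that the time average of $(h^{\theta'}_t-h^{\theta}_t)\cdot f(Y^\lambda_t)$ cannot vanish unless $\theta=\theta'$, then invoke the separating condition \eqref{eq.Background.nu.sep} --- the two key technical steps are carried out by genuinely different means. First, where the paper identifies $\lim_T\frac{1}{T}\int_0^T\EE^\theta\left[\bone_{A_t}f(Y^\lambda_t)\cdot h^{\theta'}_t\right]dt$ by moving the change of measure $M_t=d\PP^{\theta'}_t/d\PP^\theta_t$ inside the expectation and then applying ergodicity under $\tilde\PP^{\theta'}$ together with dominated convergence, you work pathwise: equating the signal-form and innovations-form semimartingale decompositions of $Y$ and applying the martingale strong law to the two stochastic integrals (each with quadratic variation $O(T)$ since $f$ is bounded) replaces the filter $h^{\theta}_t$ (resp.\ $h^{\theta'}_t$) by the signal $h(\theta;X_t)$ (resp.\ $h(\theta';X_t)$) in the time average, almost surely under the corresponding measure. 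Second, the paper never needs to transport a $\tilde\PP^{\theta'}$-a.s.\ statement back to $\tilde\PP^\theta$; it instead computes $\lim_t\EE^{\theta'}(M_t^{-1}\bone_{A_t})=\PP^\theta(\xi\geq\epsilon)$. You do need such a transfer, and your Lebesgue-decomposition argument ($L_\infty=\lim_T M_T$ is the density of the absolutely continuous part of $\PP^{\theta'}$ with respect to $\PP^\theta$ on $\mathcal{F}^Y_\infty$, hence $\PP^{\theta'}(B)\geq\EE^\theta[L_\infty\bone_B]$, and $L_\infty>0$ on $A=\{[M]_\infty<\infty\}$) is a correct and standard way to accomplish it. Your DDS/LIL identification of $\{M_T\to0\}$ with $\{\int_0^\infty|g_t|^2dt=\infty\}$ is also cleaner than the paper's subsequence argument for \eqref{eq.consist.concequence1} and dispenses entirely with the level sets $A_t=\{M_t\geq\epsilon\}$ and the indicator-convergence bookkeeping. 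Two small points to make explicit in a final write-up: the ergodicity hypothesis is stated for bounded $\phi$, so apply it to a cutoff of $z\cdot f(y)$ (harmless since $h(\theta;\cdot)$ is bounded); and for each fixed pair $(\lambda,f)$ it suffices that the three a.s.\ convergences hold simultaneously at a single $\omega\in A$, which yields the deterministic identity $I_\theta=I_{\theta'}$ and hence, quantifying over all $(\lambda,f)$, the desired contradiction.
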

\begin{proof}
To ease the notation, we assume that $\bar\sigma$ is the identity matrix.
A direct computation shows
\begin{equation}\label{eq.consist.compute M}
M_T:=\frac{L_T(\theta')}{L_T(\theta)}
= \exp\left(-\frac{1}{2}\int_0^T \left|\Delta h_t^{\theta',\theta}\right|^2 dt
+ \int_0^T \Delta h_t^{\theta',\theta} d W^{\theta}_t \right),
\end{equation}
where $W^\theta$ is a $\PP^\theta$-Brownian motion and $\Delta h_t^{\theta',\theta} := h^{\theta'}_t-h^{\theta}_t$.
Thus, $M$ is a positive supermartingale under $\PP^\theta$, which therefore converges a.s. under $\PP^\theta$ to a limit denoted by $\xi$. Our goal is to show that $\xi=0$, $\PP^\theta$-a.s..

\smallskip

We argue by contradiction and assume that $\PP^\theta(\xi>0)>0$.
Then, we claim that there exists $\epsilon>0$ s.t. $\PP^\theta(\xi\geq\epsilon)>0$ and
\begin{equation}\label{eq.MLEConsistPf.Ind.Conv1}
\lim_{T\to\infty}\bone_{\{M_T\ge\epsilon\}} = \bone_{\{\xi\geq\epsilon\}},
\end{equation}
a.s. under $\PP^\theta$.
The first part of this clam is obvious and the second follows from the observation that the set of mass points of the distribution of $\xi$ is at most countable, and from the application of the continuous mapping theorem.
Moreover, noticing that $M^{-1}$ is a positive supermartingale under $\PP^{\theta'}$, we conclude that there exist $\epsilon>0$ and $\eta$ such that
\begin{equation}\label{eq.MLEConsistPf.Ind.Conv2}
\lim_{T\to\infty} M^{-1}_T = \eta,\quad
\lim_{T\to\infty}\bone_{\{M_T\ge\epsilon\}} = \bone_{\{\eta\leq\epsilon^{-1}\}},
\end{equation}
a.s. under $\PP^{\theta'}$.

\smallskip

Then, with the notation $A:=\{\xi\ge\epsilon\}$, we claim that
\begin{equation}\label{eq.consist.concequence1}
\lim_{T\to\infty}\frac{1}{T}\int_0^T \left|\Delta h_t^{\theta',\theta}\right|^2 dt = 0 \quad\PP^\theta\mbox{-a.s. on $A$}
\end{equation}
To prove \eqref{eq.consist.concequence1}, we notice that, if it doesn't hold, there exists an event $A'\subset A$, s.t. $\PP^\theta(A')>0$ and
$$
\liminf_{T\to\infty}-\frac{1}{T}\int_0^{T} \left|\Delta h_t^{\theta',\theta}\right|^2 dt < 0 \quad\mbox{on $A'$}
$$
Then, on $A'$, we have
\begin{align*}
&\liminf_{T\to\infty} M_T
= \liminf_{T\to\infty} \exp\left(\frac{T}{2}\left[-\frac{1}{T}\int_0^T \left|\Delta h_t^{\theta',\theta}\right|^2 dt
+ \frac{2}{T}\int_0^T \Delta h_t^{\theta',\theta} d W^{\theta}_t\right] \right) = 0,
\end{align*}
as the term $\frac{1}{T}\int_0^T \Delta h_t^{\theta',\theta} d W^{\theta}_t$ converges to zero in probability (under $\PP^\theta$) and hence a.s. along a subsequence of any sequence of $\{T_n\}$.
The above is a contradiction as $A'\subset A$ and $\PP^\theta(A')>0$. Thus, we conclude that \eqref{eq.consist.concequence1} holds.

\smallskip

Next, using the dominated convergence theorem, we deduce from \eqref{eq.consist.concequence1} that
\begin{equation*}
\lim_{T\to\infty}\EE^\theta \frac{1}{T}\int_0^T \left|\Delta h_t^{\theta',\theta}\right|^2 dt\, \bone_A = 0.
\end{equation*}
The above, along with the Cauchy-Schwartz inequality, yields the following for any bounded test function $f:\RR^k\rightarrow\RR^m$ and any test process $Y^\lambda$ satisfying the assumptions of the theorem:
\begin{align*}
0 &\le \lim_{T\to\infty} \EE^\theta\frac{1}{T}\int_0^{T} \bone_{A}\, \left|f\left(Y^\lambda_t\right)\cdot \Delta h_t^{\theta',\theta}\right| dt \\
&\le \lim_{T\to\infty} \left(\EE^\theta \frac{1}{T}\int_0^{T} \left|\Delta h_t^{\theta',\theta}\right|^2 dt\, \bone_A\right)^{1/2} \cdot \lim_{T\to\infty}\left(\EE^\theta \frac{1}{T}\int_0^{T} \left|f\left(Y^\lambda_t\right)\right|^2 dt \bone_{A}\right)^{1/2} = 0.
\end{align*}

In addition, using the notation
\begin{equation*}
A_t := \{M_t\geq \epsilon\},
\end{equation*}
and recalling \eqref{eq.MLEConsistPf.Ind.Conv1}, as well as the dominated convergence theorem, we obtain
\begin{align*}
&\lim_{T\to\infty} \EE^\theta\frac{1}{T}\int_0^{T} \left|(\bone_{A_t}-\bone_{A})\,f\left(Y^\lambda_t\right)\cdot\Delta h_t^{\theta',\theta}\right| dt=0,
\end{align*}
which yields
\begin{equation}\label{eq.MLEConsistPf.AveDeltaMu.vanish}
\lim_{T\to\infty} \frac{1}{T}\int_0^{T} \EE^\theta \left(\bone_{A_t}\,f\left(Y^\lambda_t\right) \cdot\Delta h_t^{\theta',\theta}\right) dt = 0.
\end{equation}
Let us expand the integrand in \eqref{eq.MLEConsistPf.AveDeltaMu.vanish}:
\begin{align}
\EE^\theta \left(\bone_{A_t}\,f\left(Y^\lambda_t\right) \cdot\Delta h_t^{\theta',\theta}\right) &= \EE^{\theta}\left(\bone_{A_t}\,f\left(Y^\lambda_t\right) \cdot\tilde{\EE}^{\theta'}\left[h(\theta';X_t)\,\vert\,\mathcal{F}^Y_t\right]\right)
- {\EE}^{\theta}\left(\bone_{A_t}\,f\left(Y^\lambda_t\right)\cdot\tilde{\EE}^{\theta}\left[h(\theta;X_t)\,\vert\,\mathcal{F}^Y_t\right]\right) \nonumber\\
& = \EE^{\theta'}\left[M^{-1}_t\tilde{\EE}^{\theta'}\left(\bone_{A_t}\, f\left(Y^\lambda_t\right)\cdot h(\theta';X_t)\,\vert\,\mathcal{F}^Y_t\right)\right] 
- {\EE}^{\theta}\tilde{\EE}^{\theta}\left(\bone_{A_t}\,f\left(Y^\lambda_t\right)\cdot h(\theta;X_t)\,\vert\,\mathcal{F}^Y_t\right)\nonumber\\
& = \tilde{\EE}^{\theta'}\left(M^{-1}_t \bone_{A_t}\,f\left(Y^\lambda_t\right) \cdot h(\theta';X_t)\right) 
- \tilde{\EE}^{\theta}\left(\bone_{A_t}\,f\left(Y^\lambda_t\right) \cdot h(\theta;X_t)\right),\nonumber
\end{align}
where we used the fact that $M_t = d\PP^{\theta'}_{[0,t]}/d\PP^\theta_{[0,t]}$.
Note that $\tilde\EE^\theta$ is an expectation under $\tilde\PP^\theta$, which is defined on a larger probability space than $\PP^\theta$, hence the right hand side of the above may not equal zero even if $h(\theta;x)$ does not depend on $\theta$.

\smallskip

From the above, we deduce
\begin{align}
&0=\lim_{T\rightarrow\infty}\frac{1}{T}\int_0^{T} \EE^\theta \left(\bone_{A_t}\,f\left(Y^\lambda_t\right) \cdot\Delta h_t^{\theta',\theta}\right) dt \label{eq.MLEConsistPf.AveDeltaMu.vanish.n}\\
&= \lim_{T\rightarrow\infty} \left[\tilde{\EE}^{\theta'} \frac{1}{T}\int_0^{T} M^{-1}_t \bone_{A_t}\, f\left(Y^\lambda_t\right) \cdot h(\theta';X_t) dt
- \tilde{\EE}^{\theta} \frac{1}{T}\int_0^{T} \bone_{A_t}\, f\left(Y^\lambda_t\right) \cdot h(\theta;X_t) dt\right].\nonumber
\end{align}
Recalling that $(h(\theta;X),Y^\lambda)$ is strongly ergodic under $\tilde{\PP}^\theta$ and \eqref{eq.MLEConsistPf.Ind.Conv1}, we obtain $\tilde{\PP}^\theta$-a.s.:
\begin{align*}
&\lim_{T\rightarrow\infty}\frac{1}{T}\int_0^{T} \bone_{A_t}\, f\left(Y^\lambda_t\right)\cdot h(\theta;X_t) dt
= \bone_{\{\xi\geq\epsilon\}} \int_{\RR^m\times \RR^k} z\cdot f(y) \nu^{\theta,\lambda}(dx,dy).
\end{align*}
Using the dominated convergence, we deduce from the above that
\begin{align*}
&\lim_{T\rightarrow\infty}\tilde\EE^\theta \frac{1}{T}\int_0^{T} \bone_{A_t}\, f\left(Y^\lambda_t\right)\cdot h(\theta;X_t) dt
= \PP(\xi\geq\epsilon) \int_{\RR^m\times \RR^k} z\cdot f(y) \nu^{\theta,\lambda}(dz,dy),
\end{align*}
where we also noticed that $\PP^\theta$ and $\tilde\PP^\theta$ coincide on $\mathcal{F}^Y_\infty$ and $\xi\in \mathcal{F}^Y_\infty$.
Similarly, recalling \eqref{eq.MLEConsistPf.Ind.Conv2} and using the strong ergodicity along with the dominated convergence, we obtain
\begin{align*}
&\lim_{T\rightarrow\infty} \tilde{\EE}^{\theta'} \frac{1}{T}\int_0^{T} M^{-1}_t \bone_{A_t}\, f\left(Y^\lambda_t\right)\cdot h(\theta';X_t) dt
= \EE^{\theta'}\left(\eta\bone_{\{\eta\leq\epsilon^{-1}\}}\right) \int_{\RR^m\times \RR^k} z\cdot f(y) \nu^{\theta',\lambda}(dz,dy)\\
&= \lim_{t\rightarrow\infty}\EE^{\theta'}\left(M^{-1}_t\bone_{\{M_t\geq\epsilon\}}\right) \int_{\RR^m\times \RR^k} z\cdot f(y) \nu^{\theta',\lambda}(dz,dy)
= \PP^\theta(\xi\geq\epsilon) \int_{\RR^m\times \RR^k} z\cdot f(y) \nu^{\theta',\lambda}(dz,dy)\,
\end{align*}
where we used again the fact that $M_t = d\PP^{\theta'}_{[0,t]}/d\PP^\theta_{[0,t]}$.

\smallskip

Collecting the above and recalling \eqref{eq.MLEConsistPf.AveDeltaMu.vanish.n}, as well as $\PP^\theta(\xi\geq\epsilon)>0$, we obtain
\begin{align*}
\int_{\RR^m\times \RR^k} z\cdot f(y) \nu^{\theta,\lambda}(dz,dy)
= \int_{\RR^m\times \RR^k} z\cdot f(y) \nu^{\theta',\lambda}(dz,dy).
\end{align*}
The above, in view of the arbitrary choice of $f$ and $\lambda$, yields $\theta=\theta'$ which leads to a desired contradiction.
\qed
\end{proof}

\medskip

To prove Theorem \ref{thm:main} it remains to notice that, due to Proposition \ref{prop:main} and the finiteness of $\Theta$, $\PP^\theta$-a.s., for all large enough $T$, the unique maximizer of $L_T(\cdot)$ is given by $\theta$.

\section{Latent price model of market microstructure}
\label{se:latentprice}

We denote by $X$ the latent price process (a.k.a. the unobserved fundamental price process) and by $Y$ the order flow process, whose value at time $t$ is given by the total volume of all the limit buy orders at time $t$ less the total volume of all limit sell orders.\footnote{This notion of order flow is sometimes called the total order imbalance. Note that it implicitly incorporates the changes in both limit and market orders (as any market order eliminates the corresponding number of limit orders).}
Instead of postulating what $X_t$ should be, as a function of observed market factors, we list the natural properties required of $X$ and then build a simple model that reflects these properties. The three properties we focus on herein are: (i) on a macroscopic scale, $X$ and $Y$ should evolve as random walks, (ii) there is a positive correlation between the local increments of $X$ and $Y$ (referred to as price impact), and (iii) $X$ has a microscopic drift that pushes it away from the midprice (i.e., from the middle point between the best bid and ask prices). The first two properties are standard in the existing literature and are confirmed empirically. 
The third property appeared recently in \cite{N}, where the presence of such microscopic drift is used as an explanation of the concavity of expected price impact of a large meta-order. The presence of such microscopic drift is also confirmed empirically in \cite{N} for $X$ given by the ``microprice" (an observed quantity that is another candidate for the notion of price in a high frequency regime).
Assuming that the prices are measured in cents (i.e., in $\$0.01$) and that the bid-ask spread stays at one cent (its smallest possible value) most of the time, the above properties translate into the following diffusion model for $(X,Y)$:
\begin{equation}\label{eq.mainDyn.def}
\left\{
\begin{array}{l}
{dX_t = \alpha\beta\mu(X_t) dt + \sigma dB_t,}\\
{dY_t = \frac{1}{\alpha} dX_t + \sqrt{\bar\sigma^2 - \sigma^2/\alpha^2} d\tilde W_t = \beta\mu(X_t) dt + \frac{\sigma}{\alpha} dB_t + \sqrt{\bar\sigma^2 - \sigma^2/\alpha^2} d\tilde W_t,\quad Y_0=0,}
\end{array}
\right.
\end{equation}
where
\begin{align*}
\mu(x) = x \,\mmod\,1-1/2,\quad \alpha,\beta,\sigma,\bar\sigma\in(0,\infty),\quad \sigma<\alpha\bar\sigma,
\end{align*}
$B$ and $\tilde W$ are independent one-dimensional Brownian motions, and $X_0$ is a random variable independent of $(B,\tilde W)$.
Using Girsanov's theorem, it is easy to show that there exists a weak solution to \eqref{eq.mainDyn.def}, that this solution is unique in law, and that $X$ and $\tilde W$ are independent.
The best bid and ask prices at time $t$ are given by $P^b_t:=B(X_t)$ and $P^a_t:=A(X_t)$ respectively, with the associated functions defined as follows:
\begin{align}
&B(x) = 
\left\{
\begin{array}{ll}
{\lfloor x \rfloor,} & {\text{if }\nexists i\in\ZZ\text{ s.t. }|x-i|\leq \epsilon,}\\
{i-1,} & {\text{if }\exists i\in\ZZ\text{ s.t. }|x-i|\leq \epsilon,}
\end{array}
\right.\label{eq.B.def}\\
&A(x) = 
\left\{
\begin{array}{ll}
{\lceil x \rceil,} & {\text{if }\nexists i\in\ZZ\text{ s.t. }|x-i|\leq \epsilon,}\\
{i+1,} & {\text{if }\exists i\in\ZZ\text{ s.t. }|x-i|\leq \epsilon,}
\end{array}
\right.\label{eq.A.def}
\end{align}
for a constant $\epsilon\in(0,1/2)$.

\medskip

For $\beta=0$, the system \eqref{eq.mainDyn.def} yields a classical model (in the spirit of \cite{AlmgrenChriss}) where the latent price and the order flow are modeled as correlated random walks, and where each share purchased moves the latent price up by $\alpha$ (the latter is the price impact coefficient). When $\beta>0$, the latent price process obtains an additional micro-drift (which is also transferred to the order flow via the price impact) that pushes the latent price away from the midprice.

\medskip

Our observations are given by the continuous-time processes
\begin{align*}
P^b_t, P^a_t, Y_t,\quad t\in[0,T].
\end{align*}
Note that the present setting does not immediately fit the one considered in Section \ref{se:general}, because the observation process herein includes certain (non-invertible) functions of $X$, in addition to the coupled diffusion process $Y$. In what follows, we show how to reduce the estimation problem in the above model to the one considered in Section \ref{se:general}.

We start by noticing that the constant $\bar\sigma^2$ can be estimated precisely from the above sample, for any $T>0$, as
\begin{align}
\frac{[Y,Y]_T}{T}=\bar\sigma^2,\label{eq.sigmaBar.est}
\end{align}
we assume that $\bar\sigma^2$ is known.
Our primary goal is to find a consistent estimator for the unknown vector of parameters $\theta:=(\alpha,\beta,\sigma^2)$, where the consistency is understood in the regime $T\rightarrow\infty$.\footnote{It is also shown in Subsection \ref{subse:empirical} how to construct a consistent estimator of $\epsilon$. However, $\epsilon$ does not appear in the estimation algorithm for the rest of the parameters -- this is why the dependence on $\epsilon$ is omitted in most of the remainder of the paper.}
We assume that $\theta$ belongs to a given (known) set $\Theta\subset (0,\infty)^3$. We construct our estimator on the space $(\ZZ^2)^{\otimes[0,\infty)}\times C([0,\infty);\RR)$ of the paths of $(P^b,P^a,Y)$. For any $\theta\in\Theta$, we equip the latter space with the measure $\bar\PP^{\theta}$ which is defined as the distribution of $(B(X),A(X),Y)$, where $(X,Y)$ are given by \eqref{eq.mainDyn.def}. 

\medskip

The estimation procedure consists of two parts. First, we obtain the consistent estimator of 
\begin{align}
&\Sigma=\Sigma(\theta):=\frac{\sigma^2}{\phi(\gamma(\theta))},\label{eq.Sigma.def}
\end{align}
where
\begin{align}
&\phi(z):=2\int_{0}^1\exp(-z (x-1/2)^2)\int_0^x \exp(z (y-1/2)^2) dy dx,\quad \gamma(\theta):=\frac{\alpha\beta}{\sigma^2}.
\label{eq.C1C2.def}
\end{align}

\smallskip


\begin{proposition}\label{prop:Sigma}
Assume that $M:\RR_+\rightarrow\NN$ is such that $M(T),T/M(T)\rightarrow\infty$ as $T\rightarrow\infty$. Then, for any $\theta=(\alpha,\beta,\sigma^2)\in\Theta$, we have
\begin{equation*}
\lim_{T\rightarrow\infty}\hat\Sigma_T = \Sigma(\theta),
\end{equation*}
in probability under $\bar\PP^{\theta}$, where
$$
\hat\Sigma_T:=\frac{1}{T} \sum_{k=1}^{M(T)} \left(\hat X_{kT/M(T)} - \hat X_{(k-1)T/M(T)}\right)^2,
\quad \hat X_t:=\frac{1}{2}(P^b_t+P^a_t).
$$
\end{proposition}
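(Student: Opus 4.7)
The plan is to pass from the midprice $\hat X$ to a martingale $Z$ obtained via a periodic corrector, for which the realized variation limit is transparent. The torus projection $\tilde X_t:=X_t\,\mmod\,1$ is a uniformly elliptic diffusion on $\RR/\ZZ$ with generator $L:=(\sigma^2/2)\partial_{xx}+b\partial_x$ (with $b(x)=\alpha\beta(x-1/2)$ on $[0,1)$), and unique invariant density $\rho(x)\propto e^{\gamma(x-1/2)^2}$. By symmetry $\int_0^1 b\,\rho\,dx=0$, so the Poisson equation $L\chi=b$ admits a smooth $1$-periodic solution $\chi$; solving the resulting first-order ODE $u'=-2\gamma(x-1/2)u$ for $u=1-\chi'$ yields $1-\chi'(x)=Ce^{-\gamma(x-1/2)^2}$ with $C=1/\int_0^1 e^{-\gamma(y-1/2)^2}dy$, and It\^o's formula applied to $X_t-\chi(X_t)$ produces the decomposition
\[
X_t = X_0-\chi(X_0)+\chi(X_t)+Z_t,\qquad Z_t:=\sigma\int_0^t\bigl(1-\chi'(X_s)\bigr)\,dB_s.
\]

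Next I reduce $\hat\Sigma_T$ to the realized variation of $Z$. Since $|F(x)-x|\le 1/2$ for the step function $F$ with $\hat X=F(X)$, and $\chi$ is bounded, setting $t_k:=kT/M(T)$ and $\Delta R_k:=(\hat X_{t_k}-\hat X_{t_{k-1}}) - (Z_{t_k}-Z_{t_{k-1}})$ gives $|\Delta R_k|\le 1+2\|\chi\|_\infty$, whence
\[
\hat\Sigma_T = \frac{1}{T}\sum_{k=1}^{M(T)}(\Delta Z_k)^2 + \frac{2}{T}\sum_{k=1}^{M(T)}\Delta Z_k\,\Delta R_k + \frac{1}{T}\sum_{k=1}^{M(T)}(\Delta R_k)^2.
\]
The last sum is $O(M(T)/T)\to 0$. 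For the first, the conditional bound $\EE[(\Delta Z_k)^2\mid\cF_{t_{k-1}}]\le \sigma^2\|1-\chi'\|_\infty^2(t_k-t_{k-1})$ makes its expectation $O(1)$, so $(1/T)\sum_k(\Delta Z_k)^2=O_{\bar\PP^\theta}(1)$; Cauchy-Schwarz then bounds the cross term by $2\sqrt{M(T)/T}\cdot\sqrt{(1/T)\sum_k(\Delta Z_k)^2}=o_{\bar\PP^\theta}(1)$. Thus the problem reduces to proving $(1/T)\sum_k(\Delta Z_k)^2 \to \Sigma(\theta)$ in probability.

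Writing $(\Delta Z_k)^2 = \EE[(\Delta Z_k)^2\mid\cF_{t_{k-1}}]+D_k$ with $\{D_k\}$ a martingale-difference sequence, the predictable sum telescopes onto the bracket,
\[
\sum_{k=1}^{M(T)}\EE[(\Delta Z_k)^2\mid\cF_{t_{k-1}}] = \sigma^2\int_0^T\bigl(1-\chi'(X_s)\bigr)^2\,ds,
\]
and the ergodic theorem for $\tilde X$ on the compact torus, after dividing by $T$, yields the a.s.\ limit
\[
\sigma^2 C^2\int_0^1 e^{-2\gamma(x-1/2)^2}\rho(x)\,dx = \frac{\sigma^2}{\phi(\gamma)}=\Sigma(\theta),
\]
where the identification uses the factorization $\phi(\gamma)=\bigl(\int_0^1 e^{-\gamma(x-1/2)^2}dx\bigr)\bigl(\int_0^1 e^{\gamma(y-1/2)^2}dy\bigr)$, obtained from \eqref{eq.C1C2.def} by symmetrizing the inner integral under $x\mapsto 1-x$. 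For the remainder $\sum_k D_k$, martingale-difference orthogonality and the Burkholder-Davis-Gundy inequality (with $\|1-\chi'\|_\infty<\infty$) yield $\EE D_k^2\le \EE(\Delta Z_k)^4\le C(T/M(T))^2$, so $\var{(1/T)\sum_k D_k}\le C/M(T)\to 0$ and Chebyshev finishes. The only nontrivial subtlety will be justifying the ergodic limit for the (possibly non-stationary) initial law of $X_0$; this is standard via exponential mixing on the compact torus, but warrants some care.
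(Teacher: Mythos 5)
Your proof is correct, but it follows a genuinely different route from the paper's. The paper discretizes $X$ at its successive integer-hitting times $\tau_i$, uses a reflection argument to show the embedded increments are i.i.d.\ $\pm1$ signs independent of the counting process $N_t$, combines the renewal LLN, a CLT and fourth-moment/exponential-moment bounds on $\tau_{i+1}-\tau_i$ to get $\EE(\tilde X_t-\tilde X_0)^2/t\to 1/\EE(\tau_2-\tau_1)$ uniformly in $X_0$, controls the sum over the $M(T)$ blocks by a conditional-variance computation, and finally identifies $1/\EE(\tau_2-\tau_1)=\sigma^2/\phi(\gamma)$ via the Feynman--Kac ODE for the mean exit time. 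You instead use the periodic-homogenization corrector: solving $L\chi=b$ explicitly reduces $\hat\Sigma_T$ (up to uniformly bounded block errors, killed by $T/M(T)\to\infty$) to the realized variation of the martingale $Z$, whose normalized bracket converges by the ergodic theorem to the classical effective diffusivity $\sigma^2/(\int_0^1 e^{\gamma(y-1/2)^2}dy\int_0^1 e^{-\gamma(x-1/2)^2}dx)$, and your symmetrization identity correctly shows this product equals $\phi(\gamma)$. Your route avoids the renewal/reflection machinery and the exponential moment estimates entirely and gives cleaner variance control (bounded martingale increments); the paper's route is more elementary and yields the interpretation $\Sigma=1/\EE(\tau_2-\tau_1)$, which it reuses later (e.g.\ $\lim_t \tilde\EE^\theta X_t^2/t=\Sigma(\theta)$ in Step 2 of the proof of Proposition \ref{prop:LatentPrice.verify}). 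Three small points to tighten: (i) since $\mu$ jumps at integers, the corrector is only $C^1$ with bounded a.e.\ second derivative, so you need the It\^o--Krylov (or It\^o--Tanaka) form of It\^o's formula, which is harmless here because the explicit $1-\chi'=Ce^{-\gamma(x-1/2)^2}$ is continuous and periodic; (ii) $\sum_k\EE[(\Delta Z_k)^2\mid\mathcal{F}_{t_{k-1}}]$ does not literally equal $\sigma^2\int_0^T(1-\chi'(X_s))^2ds$ --- it equals the sum of conditional expectations of the bracket increments, and the discrepancy is a further martingale with increments bounded by $C\,T/M(T)$, handled exactly like your $\sum_k D_k$; (iii) your $\chi$ clashes notationally with the stationary density $\chi$ of \eqref{eq.LatentPrice.mainThm.Pf.chi.def}, and the ergodic limit for an arbitrary law of $X_0$ does warrant the justification you flag, which the paper's Proposition \ref{le:ergod} in fact supplies.
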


The proof of this proposition is given in the Appendix.




\medskip

The second, and most important, part of the estimation is performed by applying the maximum likelihood method to the partially observed diffusion \eqref{eq.mainDyn.def}, as described in Section \ref{se:general}. Note that the notation used herein is consistent with the one used in Section \ref{se:general}:
\begin{align*}
\bar\sigma W = \frac{\sigma}{\alpha} B + \sqrt{\bar\sigma^2 - \sigma^2/\alpha^2}\,\tilde W,
\quad b(\theta;x) = \alpha\beta\mu(x),\quad h(\theta;x)=\beta\mu(x),\quad \sigma(\theta;x)=\sigma,
\quad q=m=d=1.
\end{align*}
Recall that $\tilde \PP^{\theta}$ denotes the distribution of $(X,Y)$, given by \eqref{eq.mainDyn.def} for any fixed $\theta\in\Theta$, that $\PP^{\theta}$ is the second marginal of $\tilde \PP^{\theta}$, and that $\PP$ is the distribution of $\bar\sigma W$ (i.e., a scaled Wiener measure).
Recall also that $\tilde\PP^\theta_T,\PP^{\theta}_T,\PP_T$ denote the restrictions of respective measures to the time interval $[0,T]$.

The likelihood function, in this case, becomes:
\begin{align}
L_T(\theta) =\exp\left(-\frac{\beta^2}{2\bar\sigma^2}\int_0^T \left(\mu^{\theta}_t\right)^2 dt
+ \frac{\beta}{\bar\sigma^2} \int_0^T \mu^{\theta}_t dY_t \right),
\quad \mu^\theta_t := \tilde\EE^\theta\left( \mu(X_t)\,\vert\,\mathcal{F}^Y_t\right).
\label{eq.LatentPrice.L.def}
\end{align}
Notice that the moment-matching condition $\hat\Sigma_T=\Sigma(\theta)$, effectively, reduces the dimension of unknown $\theta$.
Thus, for any given $T>0$ and $s>0$, we define MLE $\hat{\theta}_T(s)$ of $\theta$ for a reduced parameter set as any random variable satisfying
\begin{align}
&\hat{\theta}_T(s) \in \text{argmax}_{\theta\in\Theta(s)}\left(-\frac{\beta}{2}\int_0^T \left(\mu^{\theta}_t\right)^2 dt
+ \int_0^T \mu^{\theta}_t dY_t \right),\label{eq.alphahat.def}
\end{align}
where 
\begin{align*}
& \Theta(s):=\text{argmin}_{\theta\in\Theta}\left(|\Sigma(\theta)-s|\right).
\end{align*}
More precisely, we define $\hat{\theta}_T(s,\omega)$ as a selector from the above argmax that is $\mathcal{B}(\RR)\otimes\mathcal{F}^Y_T$-measurable in $(s,\omega)$ (see \cite[Thm 18.13]{Aliprantis} for the existence of such a measurable selector).

The above MLE for a reduced parameter set is consistent under the following technical assumption (in addition to the assumption $|\Theta|<\infty$).

\begin{ass}\label{ass:1}
The function
\begin{align}
z\mapsto  \frac{1-1/\phi(z)}{z(1/\psi(z)-1) \phi(z)}
\label{eq.TheFunc}
\end{align}
is invertible on the domain $\{z=\alpha\beta/\sigma^2:\,(\alpha,\beta,\sigma^2)\in\Theta\}$,
where
\begin{align*}
& \phi(z)=2\int_{0}^1\exp(-z (x-1/2)^2)\int_0^x \exp(z (y-1/2)^2) dy dx,
\quad \psi(z):= \int_0^1 \exp(z y^2 - z y) dy.
\end{align*}
\end{ass}

\begin{remark}
Assumption \ref{ass:1}, in principle, needs to be verified for every given set $\Theta$, which is not hard to do if the latter set is finite. The numerical experiments indicate that $\phi(z)$ is strictly increasing for all $z\geq0$ (see Figure \ref{fig:1}). Such strict monotonicity of \eqref{eq.TheFunc} would clearly imply that Assumption \ref{ass:1} is always satisfied. However, at this time we are not able to prove this monotonicity rigorously for the entire range $z\geq0$, hence it remains a conjecture.
\end{remark}

\begin{theorem}\label{thm:main}
Assume that $\Theta$ is finite and let Assumption \ref{ass:1} hold.
Then, for any $\theta=(\alpha,\beta,\sigma^2)\in\Theta$ and $s=\Sigma(\theta)$, we have
\begin{equation*}
\lim_{T\rightarrow\infty} \hat{\theta}_T(s) = \theta,
\end{equation*}
in probability (and a.s.) under $\PP^{\theta}$.
\end{theorem}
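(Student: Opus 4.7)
The plan is to apply Theorem~\ref{thm:NY} to the finite parameter set $\Theta(\Sigma(\theta))$, which is a subset of $\Theta$ and contains $\theta$ by construction. Since $\hat\theta_T(\Sigma(\theta))$ is, by definition, a maximizer of $L_T(\cdot)$ over this restricted set, once the hypotheses of Theorem~\ref{thm:NY} are verified for $\Theta(\Sigma(\theta))$ the consistency claim follows immediately. The boundedness of $h(\theta;\cdot)=\beta\mu(\cdot)$ is automatic from $|\mu|\le 1/2$.

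\textbf{Step 1 (test processes and ergodicity).} I would take $\Lambda=(0,\infty)$ and, as suggested by~\eqref{eq.sec2.Ylambda.def1}, $Y^\lambda_t=\int_0^t e^{-\lambda(t-s)}dY_s$, satisfying $dY^\lambda_t=-\lambda Y^\lambda_t\,dt+dY_t$. The projection $X_t\bmod 1$ is a diffusion on the unit circle with bounded drift $\alpha\beta(x-1/2)$ and constant positive diffusion $\sigma$; a direct zero-flux computation yields the unique invariant density
\begin{align*}
p_\theta(x)=\frac{\exp(\gamma(\theta)(x-1/2)^2)}{\int_0^1\exp(\gamma(\theta)(y-1/2)^2)\,dy},\qquad x\in[0,1),\qquad \gamma(\theta):=\frac{\alpha\beta}{\sigma^2},
\end{align*}
with exponential mixing. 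Since $Y^\lambda$ is a mean-reverting OU-type process driven by $Y$ (whose drift is bounded and whose diffusion coefficient equals $\bar\sigma$), a standard Lyapunov argument using $V(x,y)=y^2$ yields strong ergodicity of the Markov pair $(X_t\bmod 1,Y^\lambda_t)$ under each $\tilde\PP^\theta$, hence of $(h(\theta;X_t),Y^\lambda_t)=(\beta\mu(X_t),Y^\lambda_t)$.

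\textbf{Step 2 (separating condition~\eqref{eq.Background.nu.sep}).} This is the main technical step and is carried out in Proposition~\ref{prop:LatentPrice.verify}. Adapting the probabilistic heuristic outlined after Theorem~\ref{thm:NY} to the present correlated setting, I would decompose
\begin{align*}
Y^\lambda_t=H^\lambda_t(\theta)+G^\lambda_t,\quad H^\lambda_t(\theta):=\int_0^t e^{-\lambda(t-s)}\beta\mu(X_s)\,ds,\quad G^\lambda_t:=\int_0^t e^{-\lambda(t-s)}\!\left[\frac{\sigma}{\alpha}dB_s+\sqrt{\bar\sigma^2-\tfrac{\sigma^2}{\alpha^2}}\,d\tilde W_s\right],
\end{align*}
take $f(y)=y^j$, expand $(Y^\lambda_t)^j$ binomially, and pass to the limit $t\to\infty$ using Step~1. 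Combined with the appropriate $\lambda\to 0^+$ and $\lambda\to\infty$ asymptotics, the equality~\eqref{eq.Background.nu.sep} across all $f$ and $\lambda$ forces a family of identities between integrals of $p_\theta$ and $p_{\theta'}$ against powers of $(x-1/2)$ and against resolvent kernels that build the functions $\phi$ and $\psi$ of Assumption~\ref{ass:1}. The two leading-order identities produce precisely the ratio in~\eqref{eq.TheFunc}; Assumption~\ref{ass:1} then implies $\gamma(\theta)=\gamma(\theta')$. Combining this with the constraint $\Sigma(\theta)=\Sigma(\theta')$ (built into the definition of $\Theta(\Sigma(\theta))$) identifies $\sigma^2$; one further stationary moment of $\beta\mu$ under $p_\theta$ versus $p_{\theta'}$ identifies $\beta$, after which $\alpha=\gamma\sigma^2/\beta$ closes the identification. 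Theorem~\ref{thm:NY} then delivers the consistency conclusion.

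\textbf{Main obstacle.} The substantive difficulty is Step~2: rigorously justifying the joint $t\to\infty$ and $\lambda\to 0^+$ (respectively $\lambda\to\infty$) asymptotics, interchanging them with $\tilde\EE^\theta$ uniformly in the relevant regime, and tracking the cross-correlation between $H^\lambda(\theta)$ and $G^\lambda$ arising from the common driver $B$ — a complication absent from the uncorrelated heuristic of Section~\ref{se:general}. The precise extraction of the ratio~\eqref{eq.TheFunc} from the asymptotic data, and the appeal to Assumption~\ref{ass:1} to convert the resulting asymptotic identity into $\gamma(\theta)=\gamma(\theta')$, is the crux of the verification.
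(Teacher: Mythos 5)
Your plan coincides with the paper's proof: it reduces Theorem~\ref{thm:main} to Theorem~\ref{thm:NY} using exactly the test processes $Y^\lambda_t=\int_0^t e^{-\lambda(t-s)}dY_s$, establishes strong ergodicity of $(X_t\bmod 1,Y^\lambda_t)$ with the same explicit stationary density (your $p_\theta$ equals the paper's $\chi$ in \eqref{eq.LatentPrice.mainThm.Pf.chi.def} after cancelling $e^{-\gamma/4}$), and verifies the separating condition \eqref{eq.Background.nu.sep} via the $\lambda\to\infty$ and $\lambda\downarrow 0$ asymptotics combined with the constraint $\Sigma(\theta)=\Sigma(\theta')$ and Assumption~\ref{ass:1}, which is precisely Propositions~\ref{le:ergod} and~\ref{prop:LatentPrice.verify}. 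The only cosmetic differences are that the paper needs only $f(y)=y$ rather than all powers $y^j$, and it orders the final algebraic identification of $(\gamma,\sigma^2,\beta,\alpha)$ slightly differently; you also correctly locate the real work in the limit interchanges and the $B$-correlation, which is where the bulk of the paper's effort lies.
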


The above theorem says that, if we know the true value of $\Sigma(\theta)$, denoted by $s$ (without necessarily knowing the true value of $\theta$ itself), then a MLE for the reduced parameter set $\Theta(s)=\{\theta\in\Theta:\,\Sigma(\theta)=s\}$ is consistent. The proof of Theorem \ref{thm:main} consists of verification of the assumptions of Theorem \ref{thm:NY}, and it is presented in the next subsection.
\medskip

Finally, we combine the above results to obtain a consistent estimator of $\theta$.
To state this final result, we note that $\hat{\theta}_T(\hat\Sigma_T)$ is a random variable on the probability space $(\ZZ^2)^{\otimes[0,\infty)}\times C([0,\infty),\RR)$ of the paths of $(P^b,P^a,Y)$.\footnote{To avoid confusion, we note that $\hat{\theta}_T(s)$ is constructed as an estimator in a model where the observation is artificially restricted to the path of $Y$ only: i.e., $\hat{\theta}_T(s)$ is a random variable on the canonical space consisting of the paths of $Y$. On the other hand, $\hat{\theta}_T(\hat\Sigma_T)$ is an estimator in a model with actual observations, given by $(P^b,P^a,Y)$: i.e., $\hat{\theta}_T(\hat\Sigma_T)$ is a random variable on the canonical space consisting of the paths of the latter processes.}

\begin{cor}
Assume that $\Theta$ is finite and let Assumption \ref{ass:1} hold.
Then, for any $\theta=(\alpha,\beta,\sigma^2)\in\Theta$, we have
\begin{equation*}
\lim_{T\rightarrow\infty} \hat{\theta}_T\left(\hat\Sigma_T\right) = \theta,
\end{equation*}
in probability (and a.s.) under $\bar\PP^{\theta}$.
\end{cor}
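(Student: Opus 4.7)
The plan is to combine Proposition \ref{prop:Sigma} with Theorem \ref{thm:main}, exploiting the finiteness of $\Theta$ to absorb the random plug-in $s=\hat\Sigma_T$ into a discrete event. First, I note that the second marginal of $\bar\PP^{\theta}$ on $C([0,\infty),\RR)$ is exactly $\PP^{\theta}$; hence any $\mathcal{F}^Y$-measurable random variable has the same law under both measures, and the consistency of $\hat\theta_T(\Sigma(\theta))$ under $\PP^\theta$ established in Theorem \ref{thm:main} transfers verbatim to $\bar\PP^\theta$.

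Next, since $\Theta$ is finite, $\Sigma(\Theta)\subset(0,\infty)$ is a finite set, so one can choose $\delta>0$ with
\begin{equation*}
|s-\Sigma(\theta)|<\delta \quad\Longrightarrow\quad \Theta(s)=\Theta(\Sigma(\theta)).
\end{equation*}
Proposition \ref{prop:Sigma} gives $\hat\Sigma_T\to\Sigma(\theta)$ in $\bar\PP^\theta$-probability, whence $\bar\PP^\theta(\Theta(\hat\Sigma_T)=\Theta(\Sigma(\theta)))\to 1$. On this event, $\hat\theta_T(\hat\Sigma_T)$ and $\hat\theta_T(\Sigma(\theta))$ are measurable selectors from the same finite $\text{argmax}$ over $\Theta(\Sigma(\theta))$.

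The remaining subtlety is that the measurable selector may a priori depend on its first argument $s$ beyond its dependence on the set $\Theta(s)$; to bypass this, I would invoke Proposition \ref{prop:main} (which drives Theorem \ref{thm:main}): for every $\theta'\in\Theta\setminus\{\theta\}$, $L_T(\theta')/L_T(\theta)\to 0$ almost surely under $\PP^\theta$. Combined with the finiteness of $\Theta(\Sigma(\theta))$, this forces $\text{argmax}_{\theta'\in\Theta(\Sigma(\theta))}L_T(\theta')=\{\theta\}$ for all sufficiently large $T$ on a $\bar\PP^\theta$-full-probability event, so that \emph{every} selector---in particular both $\hat\theta_T(\Sigma(\theta))$ and $\hat\theta_T(\hat\Sigma_T)$---equals $\theta$ on this event.

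Convergence in probability of $\hat\theta_T(\hat\Sigma_T)$ to $\theta$ under $\bar\PP^\theta$ then follows immediately from intersecting these two events. For the almost-sure statement, one needs to upgrade Proposition \ref{prop:Sigma} to $\hat\Sigma_T\to\Sigma(\theta)$ $\bar\PP^\theta$-a.s.; this should be available from a standard ergodic/strong-LLN argument implicit in its proof applied to the quadratic variation of the midprice increments, and granted this upgrade, the pathwise version of the above reasoning yields $\bar\PP^\theta$-almost-sure convergence. The only genuine obstacle is the measurable-selector issue, which is cleanly resolved by the argmax-singleton argument above.
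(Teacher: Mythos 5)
Your proof follows essentially the same route as the paper's: use Proposition \ref{prop:Sigma} and the finiteness of $\Theta$ to show that $\Theta(\hat\Sigma_T)$ eventually coincides with $\{\theta'\in\Theta:\Sigma(\theta')=\Sigma(\theta)\}$, and then invoke Theorem \ref{thm:main}. Your extra care with the measurable-selector issue (resolved via the argmax-singleton consequence of Proposition \ref{prop:main}) and your observation that the almost-sure statement requires upgrading the in-probability convergence of $\hat\Sigma_T$ are both correct refinements of points the paper's one-line proof glosses over.
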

\begin{proof}
We only need to note that, since $\Theta$ is finite and $\hat\Sigma_T\rightarrow\Sigma(\theta)$, we have $\bar\PP^{\theta}$-a.s.: for large enough $T$,
\begin{align*}
&  \Theta(\hat\Sigma_T)
= \{\theta'\in\Theta:\, \Sigma(\theta')=\Sigma(\theta)\}.
\end{align*}
Then, for such $T$, we have $\hat{\theta}_T(\hat\Sigma_T)=\hat{\theta}_T(\Sigma(\theta))$, and the result follows from Theorem \ref{thm:main}.
\qed
\end{proof}

\subsection{Proof of Theorem \ref{thm:main}}
\label{se:pf.Thm.main}

Our goal is to deduce the statement of Theorem \ref{thm:main} from Theorem \ref{thm:NY}. First, we define
\begin{align*}
dY^{\lambda}_t = -\lambda Y^{\lambda}_t dt + dY_t,\quad Y^{\lambda}_0=0=Y_0,
\end{align*}
for all $\lambda\in(0,\infty)=:\Lambda$.
It is clear that $Y^\lambda$ is adapted to $\mathbb{F}^Y$.
The following proposition shows that $(X\,\mmod\,1,Y^\lambda)$ is strongly ergodic, which verifies the first assumption of Theorem \ref{thm:NY}. In this proposition, we recall the notation $H^1([a,b],\RR)$ which denotes the Sobolev space of square-integrable functions from $[a,b]$ to $\RR$ whose first weak derivative is also square-integrable. The latter space is equipped with its natural Sobolev norm.

\begin{proposition}\label{le:ergod}
For every $\theta=(\alpha,\beta,\sigma^2)\in\Theta$ and $\lambda>0$ there exists a probability measure $\nu^{\theta,\lambda}$ on $\RR^{2}$ s.t.
\begin{align}
\int_{\RR^{2}}\phi(z,y) \nu^{\theta,\lambda}(dz,dy) 
= \lim_{T\rightarrow\infty}  \frac{1}{T} \int_0^T \tilde\EE^\theta \phi(\beta\mu(X_t), Y^\lambda_t) dt
= \lim_{T\rightarrow\infty} \frac{1}{T} \int_0^T \phi(\beta\mu(X_t),Y^\lambda_t) dt,\quad \tilde\PP^\theta\text{-a.s.},
\label{eq.LatentPrice.MainThm.Pf.Le1.eq1}
\end{align} 
for any bounded Borel $\phi:\RR^{2}\rightarrow\RR$.

Moreover, the density $q(t,\cdot)$ of $X_t\,\mmod\,1$ under $\tilde\PP^\theta$ satisfies
\begin{align*}
\lim_{t\rightarrow\infty} \int_t^{t+u} \|q(s,\cdot)-\chi\|_{H^1([0,1],\RR)} ds = \lim_{t\rightarrow\infty} \|q(t,\cdot)-\chi\|_{L^2([0,1],\RR)} = 0,
\end{align*}
where
\begin{align}
\chi(x):=\frac{\exp(\gamma x^2 - \gamma x)}{\psi(\gamma)}\bone_{[0,1]}(x),
\quad \psi(\gamma)= \int_0^1 \exp(\gamma y^2 - \gamma y) dy,
\quad \gamma=\gamma(\theta)=\alpha\beta/\sigma^2,
\label{eq.LatentPrice.mainThm.Pf.chi.def}
\end{align}
and the above convergence is uniform over all distributions of $X_0\,\mmod\,1$ and locally uniform over $u\geq0$.
\end{proposition}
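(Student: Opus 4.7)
The key observation is that $\tilde X_t := X_t\,\mmod\,1$ is itself a Markov process on the torus $\RR/\ZZ$, since $\mu(X_t) = \tilde X_t - 1/2$ depends only on $\tilde X_t$. Thus $\tilde X$ is a one-dimensional diffusion with bounded (though discontinuous at $0$) drift $\alpha\beta(\tilde x - 1/2)$ and constant non-degenerate coefficient $\sigma$: well-posedness follows from a Veretennikov-type argument, and Aronson bounds give a continuous, strictly positive transition density. The joint process $(\tilde X, Y^\lambda)$ lives on $(\RR/\ZZ)\times\RR$ with invertible diffusion matrix (thanks to $\sqrt{\bar\sigma^2 - \sigma^2/\alpha^2}>0$), hence its generator is uniformly elliptic on compacts. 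The plan is to derive both claims from standard ergodic theory applied to these two diffusions.

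For the density convergence, I would first verify $\chi$ is invariant by computing the stationary Fokker-Planck current $J(x) = \alpha\beta(x-1/2)\chi(x) - (\sigma^2/2)\chi'(x)$; the identity $\chi'/\chi = 2\alpha\beta(x - 1/2)/\sigma^2$ yields $J\equiv 0$ and is precisely the reversibility criterion, so the generator $\mathcal{L}$ is self-adjoint in $L^2(\chi\,dx)$ with Dirichlet form $(\sigma^2/2)\int_0^1 (f')^2\chi\,dx$. Compactness of the torus together with two-sided boundedness of $\chi$ then gives a Poincar\'e inequality w.r.t.\ $\chi$, i.e.\ a spectral gap, and hence exponential decay $\|q(t,\cdot)-\chi\|_{L^2([0,1],\RR)} \leq Ce^{-\rho t}$. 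The uniformity over initial distributions of $X_0\,\mmod\,1$ comes from a Doeblin minorization furnished by the Aronson lower bound. For the integrated $H^1$ claim, I would set $w := q/\chi$, which satisfies $\partial_t w = \mathcal{L} w$, and use the energy identity
\begin{equation*}
\frac{d}{dt}\|w(t,\cdot)-1\|_{L^2(\chi)}^2 + \sigma^2\int_0^1 (\partial_x w)^2\chi\,dx = 0.
\end{equation*}
Integrating over $[t, t+u]$ bounds $\int_t^{t+u}\|\partial_x w\|_{L^2(\chi)}^2\,ds$ by $\sigma^{-2}\|w(t,\cdot)-1\|_{L^2(\chi)}^2$, which decays exponentially; since $\chi$ and $\chi'$ are bounded on $[0,1]$, this controls $\int_t^{t+u}\|q(s,\cdot)-\chi\|_{H^1([0,1],\RR)}^2\,ds$, and Cauchy-Schwarz converts the squared integral into the integrated $H^1$ norm claimed, locally uniformly in $u\geq 0$.

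For the ergodicity of $(\tilde X, Y^\lambda)$, I would exhibit $V(x,y) := y^2$ as a Lyapunov function on $(\RR/\ZZ)\times\RR$: boundedness of $\mu$ gives $\mathcal{L}^{\mathrm{joint}} V \leq -\lambda y^2 + C$. Combined with local minorization of the transition kernel on $(\RR/\ZZ)\times[-R,R]$ (from ellipticity and Aronson), standard Has'minskii--Meyn--Tweedie arguments produce a unique invariant probability $\tilde\nu^{\theta,\lambda}$ on $(\RR/\ZZ)\times\RR$ and geometric ergodicity; the pointwise ergodic theorem then delivers the $\tilde\PP^\theta$-a.s.\ convergence in \eqref{eq.LatentPrice.MainThm.Pf.Le1.eq1} for every bounded Borel $\phi$, and the expectation version follows by bounded convergence. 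The measure $\nu^{\theta,\lambda}$ on $\RR^2$ of the statement is then the pushforward of $\tilde\nu^{\theta,\lambda}$ under $(\tilde x, y)\mapsto(\beta(\tilde x - 1/2), y)$.

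The main technical obstacle I anticipate is justifying parabolic regularity and the Aronson-type two-sided density bounds in the presence of the drift's discontinuity at integer points. This is handled by treating the Fokker-Planck equation in the weak sense and invoking results (Stroock-Varadhan for well-posedness, Aronson for Gaussian density bounds) that require only boundedness of the drift and uniform ellipticity of the diffusion, not continuity. The claimed uniformity in the initial law of $X_0\,\mmod\,1$ is then a direct consequence of the Doeblin minorization, which yields a total-variation contraction rate independent of the initial condition.
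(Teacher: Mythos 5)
Your plan is sound and reaches both conclusions, but it takes a genuinely different route from the paper in two places. For the density convergence, you work directly with the drifted diffusion on the torus and exploit reversibility with respect to $\chi$ (vanishing stationary current, Dirichlet form, Poincar\'e inequality, spectral gap); the paper instead applies the scale function $S$ to remove the drift entirely, obtaining a driftless diffusion whose coefficient $\tilde\sigma$ is Lipschitz and periodic, so that Friedman's classical fundamental-solution theory and Gaussian estimates apply without ever confronting the discontinuity of $\mu$ at integers, and then runs essentially your energy/Poincar\'e argument in the $Z$-coordinates. Your route is more standard but must justify the weak formulation and two-sided density bounds for a discontinuous drift head-on (feasible here via Girsanov against the constant-coefficient case, as you suggest); the paper's change of variables buys Lipschitz coefficients and lets it cite off-the-shelf parabolic regularity. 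For the joint ergodicity of $(\beta\mu(X),Y^\lambda)$, you invoke the Lyapunov--minorization--Meyn--Tweedie machinery plus the Harris law of large numbers; the paper does everything by hand -- tightness of Ces\`aro averages via It\^o applied to $(Y^\lambda)^2$, uniqueness of the invariant measure via strict positivity of the transition kernel and an $L^1$ strict-contraction argument, and the almost-sure statement via an explicit computation showing the variance of the time average vanishes. Your approach is more modular and yields geometric ergodicity as a bonus; the paper's is self-contained and produces the uniform-in-initial-condition variance bound that is reused later (in the proof of Proposition \ref{prop:Sigma}).

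Two small imprecisions to fix. First, the uniformity of the $L^2$ decay over all initial laws of $X_0\,\mmod\,1$ does not follow from the Doeblin minorization (which gives only total-variation contraction); it follows from combining the spectral gap with a bound on $\|q(1,\cdot)\|_{L^\infty}$ that is uniform over initial distributions, which is exactly what the Aronson/Gaussian upper bound on the time-$1$ transition density supplies -- you have the ingredient, but the attribution should be corrected. Second, ``the pointwise ergodic theorem'' must be the strong law of large numbers for positive Harris recurrent processes (valid for an arbitrary initial distribution, in particular for $Y^\lambda_0=0$ and $X_0$ non-stationary), not the Birkhoff theorem for the stationary process; since you establish Harris recurrence via Lyapunov plus minorization, this is available, but it should be named as such.
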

\begin{proof}
{\bf Step 1}.
In this step, we prove the first equality in \eqref{eq.LatentPrice.MainThm.Pf.Le1.eq1}. To this end, we apply It\^o's formula to $(Y^\lambda)^2$, 
\begin{align*}
d \tilde\EE^\theta (Y^\lambda_t)^2 = \left[ 2 \tilde \EE^\theta \mu(X_t) Y^\lambda_t - \lambda \tilde\EE^\theta (Y^\lambda_t)^2 + \bar\sigma^2\right] dt,
\end{align*}
and use Young's inequality to conclude that $\tilde\EE^\theta (Y^\lambda_t)^2$ is bounded from above uniformly over all $t\geq1$. This yields tightness of the laws of $(\beta\mu(X_t), Y^\lambda_t)$ over $t\geq0$, which in turn yields tightness of $\{\nu_t\}_{t\geq0}$, where
\begin{align*}
\nu_T(A) := \frac{1}{T} \int_0^T \tilde\PP^\theta\left((\beta\mu(X_t), Y^\lambda_t)\in A\right) dt,
\end{align*}
for any Borel $A\subset \RR^2$. Thus, we conclude that any sequence $T_n\rightarrow\infty$ contains a subsequence along which $\nu_T$ converges weakly to a probability measure on $\RR^2$. Any such limiting measure is invariant for the Markov process $(\beta\mu(X), Y^\lambda)$, as is easy to see from the definition of $\nu_T$.

\smallskip

Let us show that there exists only one invariant measure of $(\beta\mu(X), Y^\lambda)$. We argue by contradiction and assume that there exist two distinct invariant measures $\nu$ and $\tilde \nu$. 
Notice that
\begin{align*}
Y^\lambda_t = \frac{1}{\alpha} \int_0^t e^{-\lambda(t-s)} dX_s + \sqrt{\bar\sigma^2 - \sigma^2/\alpha^2} \int_0^t e^{-\lambda(t-s)} d\tilde W_s,
\end{align*}
where the Brownian motion $\tilde W$ is independent of $X$. Therefore, fixing any deterministic initial condition for $(X,Y^\lambda)$ and any $t,N>0$, we have
\begin{align*}
&\tilde\EE^\theta \left[ \bone_{[a,b]}(Y^\lambda_t)\,\vert\,X_t\right]
= \tilde\EE^\theta \left[ \tilde\EE^\theta \left[ \bone_{[a,b]}(Y^\lambda_t) \,\vert\, \mathcal{F}^X_t\right]\,\vert\,X_t\right],\\
& (b-a) C_1 \leq \tilde\EE^\theta \left[ \bone_{[a,b]}(Y^\lambda_t) \,\vert\, \mathcal{F}^X_t\right]\leq (b-a) C_2,
\end{align*}
for some constants $C_i(N)>0$ and for all $-N\leq a<b\leq N$.
Thus, the conditional distribution of $Y^\lambda_t$ given $X_t$ admits a strictly positive density. As the distribution of $X_t$ has the same properties (see Step 3), we conclude that the transition kernel of the Markov process $(\beta\mu(X),Y^\lambda)$ has a strictly positive density denoted by $K$.
Then, $\nu$ and $\tilde \nu$ must have densities $p$ and $\tilde p$, and
\begin{align*}
\int_{\RR^2} K(z,y,t,z',y') g(z,y) dz dy = g(z',y'),\quad g:=p-\tilde p.
\end{align*}
Notice that $\int_{\RR^2} g(z,y) dz dy=0$ and $g$ cannot be equal to zero a.e. by our assumption. Hence, there exist two sets of strictly positive Lebesgue measures on which $g<0$ and $g>0$. This and the strict positivity of $K$ imply that
\begin{align*}
&\int_{\RR^2} |g(z',y')| dz' dy'
= \int_{\RR^2} \left|\int_{\RR^2} K(z,y,t,z',y') g(z,y) dz dy\right| dz' dy'\\
&> \int_{\RR^2} \int_{\RR^2} K(z,y,t,z',y') |g(z,y)| dz dy dz' dy'
= \int_{\RR^2} |g(z,y)| dz dy,
\end{align*}
which yields the desired contradiction.
Thus, the invariant measure of $(\beta\mu(X),Y^\lambda)$ is unique and, therefore, $\nu_T$ converges weakly as $T\rightarrow\infty$, which yields the first equality in \eqref{eq.LatentPrice.MainThm.Pf.Le1.eq1}. Note that this equality holds for any choice of the initial condition for $(X,Y^\lambda)$.

\medskip

{\bf Step 2}.
In this step, we prove the second equality in \eqref{eq.LatentPrice.MainThm.Pf.Le1.eq1} by showing that the variance of the time-average in its right hand side vanishes as $T\rightarrow\infty$.
Let us fix $\theta$ and a bounded Lipschitz $\phi$, and denote
\begin{align*}
\phi_t:=\tilde\EE^\theta \phi(\beta\mu(X_t),Y^\lambda_t).
\end{align*}
Then,
\begin{align*}
& \tilde\EE^\theta\left[\frac{1}{T} \int_0^T \phi(\beta\mu(X_t),Y^\lambda_t) dt - \tilde\EE^\theta\frac{1}{T} \int_0^T \phi(\beta\mu(X_t),Y^\lambda_t) dt\right]^2\\
&= \frac{2}{T^2} \int_0^T\int_0^t \tilde\EE^\theta \left(\phi(\beta\mu(X_t),Y^\lambda_t)-\phi_t\right) \left(\phi(\beta\mu(X_s),Y^\lambda_s)-\phi_s\right) ds dt\\
&= \frac{2}{T^2} \int_0^T\int_0^t \tilde\EE^\theta\left[\tilde\EE^\theta \left[ \phi(\beta\mu(X_t),Y^\lambda_t)-\phi_t\,\vert\, X_s,Y^\lambda_s\right] \left(\phi(\beta\mu(X_s),Y^\lambda_s)-\phi_s\right)\right] ds dt\\
&= \tilde\EE^\theta \frac{2}{T} \int_0^T \frac{1}{T}\int_0^{T-s}\tilde\EE^\theta \left[ \phi(\beta\mu(X_{s+u}),Y^\lambda_{s+u})-\phi_{s+u}\,\vert\, X_s,Y^\lambda_s\right] du \left(\phi(\beta\mu(X_s),Y^\lambda_s)-\phi_s\right)ds.
\end{align*}

Let us analyze the conditional expectation in the right hand side of the above, using the Markov property of $(X,Y^\lambda)$:
\begin{align*}
&\tilde\EE^\theta \left[ \phi(\beta\mu(X_{s+u}), Y^\lambda_{s+u})-\phi_{s+u}\,\vert\, X_s, Y^\lambda_s\right]
= g(u,X_s, Y^\lambda_s) - \tilde\EE^\theta g(u,X_s, Y^\lambda_s),\\
& g(u,x,y) := \tilde\EE^\theta \left[ \phi(\beta\mu(X_{u}),Y^\lambda_{u})\,\vert\, X_0=x,Y^\lambda_0=y\right].
\end{align*}
Notice that $g(u,\cdot,y)$ is 1-periodic and bounded uniformly over all $(u,y)$, and that 
\begin{align*}
\left|g(u,x,y) - g(u,x,y')\right| \leq C_1 |y-y'| e^{-\lambda u},\quad \forall\,(u,x,y).
\end{align*}
In addition, using the strong Markov property of $(X,Y^\lambda)$, we obtain, for any $x\in[0,1]$:
\begin{align*}
g(u,x,y) = \tilde\EE^\theta g\left((u-\tau)^+,X^x_{\tau\wedge u}, Y^{\lambda,y}_{\tau\wedge u}\right),\quad \tau=\tau(x):=\inf\{t\geq0:\, X^x_t\in\{0,1\}\},
\end{align*}
where $X^x$ and $Y^{\lambda,y}$ are started from $x$ and $y$ respectively.
Then,
\begin{align*}
&g(u,x,y) - g(u,0,0) = \tilde\EE^\theta \bone_{\{\tau(x)\leq u/2\}} \left[g\left(u-\tau(x),0,Y^{\lambda,y}_{\tau(x)\wedge u}\right) - g(u,0,0)\right]
+ O\left(\tilde\PP^\theta(\tau(x)>u/2)\right)\\
&= \tilde\EE^\theta \bone_{\{\tau(x)\leq u/2\}} \left[g\left(u-\tau(x),0,0\right) - g(u,0,0)\right]
+ O\left( e^{-\lambda u/2} \tilde\EE^\theta |Y^{\lambda,y}_{\tau(x)\wedge u}|
+ \tilde\PP^\theta(\tau(x)>u/2)\right)\\
&= \tilde\EE^\theta \bone_{\{\tau(x)\leq u/2\}} \left[g\left(u-\tau(x),0,0\right) - g(u,0,0)\right]
+ O\left( e^{-\lambda u/2} (1+|y|)
+ \tilde\PP^\theta(\tau(x)>u/2)\right),\\
& \left|\frac{1}{T}\int_0^T \tilde\EE^\theta \bone_{\{\tau\leq u/2\}} \left[g\left(u-\tau,0,0\right) - g(u,0,0)\right] du\right|\\
&=  \tilde\EE^\theta \left[\frac{1}{T}\int_{\tau}^{T-\tau} g\left(u,0,0\right) du - \frac{1}{T}\int_{2\tau}^T g(u,0,0)du \right]
\leq \frac{C_2}{T} \sup_{z\in[0,1]} \tilde\EE^\theta \tau(z).
\end{align*}
The above yields
\begin{align*}
& \frac{1}{T} \int_0^T g(u,x,y) du = \frac{1}{T} \int_0^T g(u,0,0) du + O\left((1+|y|)/T\right),\quad \forall\,x,y,T.
\end{align*}
Then, as shown in Step 1,
\begin{align*}
& \int_{\RR^{2}}\phi(z,y) \nu^{\theta,\lambda}(dz,dy) 
= \lim_{T\rightarrow\infty}  \frac{1}{T} \int_0^T g(u,0,0) du,\\
& \frac{1}{T} \int_0^T \tilde\EE^\theta g(u,X_s,Y^\lambda_s) du
= \frac{1}{T} \int_0^T g(u,0,0) du + O\left(1/T\right)
= \frac{1}{T} \int_0^T g(u,X_s,Y^\lambda_s) du + O\left((1+|Y^\lambda_s|)/T\right).
\end{align*}

Collecting the above, we obtain, for any $\varepsilon>0$:
\begin{align*}
& \tilde\EE^\theta\left[\frac{1}{T} \int_0^T \phi(\beta\mu(X_t),Y^\lambda_t) dt - \tilde\EE^\theta\frac{1}{T} \int_0^T \phi(\beta\mu(X_t),Y^\lambda_t) dt\right]^2\\
&= \tilde\EE^\theta \frac{2}{T} \int_0^T \frac{1}{T}\int_0^{T-s} \left(g(u,X_s,Y^\lambda_s) - \tilde\EE^\theta g(u,X_s,Y^\lambda_s)\right) du \left(\phi(\beta\mu(X_s),Y^\lambda_s)-\phi_s\right)ds\\
&\leq C_3 \varepsilon
+ C_4 \tilde\EE^\theta \frac{2}{T} \int_0^{T(1-\varepsilon)} \frac{T-s}{T} \left| \frac{1}{T-s}\int_0^{T-s} \left(g(u,X_s,Y^\lambda_s) - \tilde\EE^\theta g(u,X_s,Y^\lambda_s)\right) du\right| ds\\
&\leq C_3 \varepsilon
+ C_5 \tilde\EE^\theta \frac{2}{\varepsilon T^2} \int_0^{T(1-\varepsilon)} \tilde\EE^\theta (1+|Y^\lambda_s|) ds
\rightarrow C_3 \varepsilon,
\end{align*}
as $T\rightarrow\infty$. Since $\varepsilon>0$ can be chosen arbitrarily small, we conclude that the second equality in \eqref{eq.LatentPrice.MainThm.Pf.Le1.eq1} holds.
As a side remark, we note that the variance of the time-average in the right hand of the second equality in \eqref{eq.LatentPrice.MainThm.Pf.Le1.eq1} vanishes uniformly over all distributions of $X_0$.

\medskip

{\bf Step 3}.
In this step, we prove the second statement of the proposition. Recall the scale function $S$ that transforms $X$ into a driftless diffusion:
\begin{align*}
&Z_t:=S(X_t),\quad S(x):=\int_0^x \exp\left(-2\int_{1/2}^y \mu(z)/\sigma^2 dz\right) dy
=\int_0^x \exp\left(-\mu^2(y)/\sigma^2\right) dy,\\
& dZ_t=\exp\left(-2\int_{1/2}^{X_t} \mu(z)/\sigma^2(z) dz\right) dX_t 
- \exp\left(-2\int_{1/2}^{X_t} \mu(z)/\sigma^2(z) dz\right) \mu(X_t) dt\\
& = \tilde\sigma(Z_t) dW_t,
\quad \tilde\sigma(z):= S'(S^{-1}(z)) \sigma.
\end{align*}
Note that $\tilde\sigma$ is p-periodic, with $p:=S(1)>0$.
It is easy to see that the second statement of the theorem is equivalent to the following statement: as $t\rightarrow\infty$, the density of $Z_t\,\mmod\,p$ under $\tilde\PP^\theta$ converges in $H^1([0,p],\RR)$ to $\tilde\sigma^{-2}$, uniformly over all initial conditions in $[0,p]$.

As $\tilde\sigma$ is bounded and Lipschitz, the results of \cite{Friedman} yield, for any $T>0$, the existence of the fundamental solution $\Gamma(x,t,y)$ of the PDE
\begin{align*}
\partial_t \Gamma = \frac{1}{2} \tilde\sigma^2(x) \partial^2_{xx} \Gamma,\quad x\in\RR,\,\, t>0.
\end{align*}
Then, it is easy to verify that
\begin{align*}
\hat q(x,t,y):= \frac{\tilde\sigma^2(x)}{\tilde\sigma^2(y)} \Gamma(y,t,x)
\end{align*}
is the fundamental solution to the adjoint equation in the $(t,y)$ variables:
\begin{align}
\partial_t \hat q = \frac{1}{2} \partial^2_{yy} \left[\tilde\sigma^2(y)\hat q\right],\quad y\in\RR,\,\, t>0,
\label{eq.tildeq.adjoint}
\end{align}
such that $\tilde\sigma^2(\cdot)\hat q(x,\cdot,\cdot)\in C^{1,2}((0,\infty)\times\RR)$ and that the weak derivative $\partial_y\hat q(x,\cdot,\cdot)$ is represented by a locally integrable function on $(0,\infty)\times\RR$.

The defining properties of a fundamental solution imply that $\hat q$ is the transition density of the Markov process $Z$.
In addition, $\hat q$ is strictly positive and the Gaussian estimates for $\Gamma$, established in \cite{Friedman} imply:
\begin{align}
\hat q(x,t,y) \leq C_1 t^{-1/2} \exp(-C_2(x-y)^2/t),\quad \left|\partial_y\hat q(x,t,y)\right| \leq C_1 t^{-3/2}|x-y| \exp(-C_2(x-y)^2/t).
\label{eq.Gaussian.est}
\end{align}
As $X_t:=S^{-1}(Z_t)$ and the derivative of $S^{-1}$ is Lipschitz and bounded from above and away from zero, we note (for the future reference) that the Gaussian estimates \eqref{eq.Gaussian.est} also hold for the transition density $\bar q(x,t,y)$ of the Markov process $X$ in place of $\hat q$.


Denoting by $\kappa(dx)$ the distribution of $Z_0$, we conclude that the density $\tilde q(t,\cdot)$ of $Z_t\,\mmod\,p$ is given by
\begin{align*}
\tilde q(t,y) = \sum_{n\in\ZZ} \int_\RR \hat q(x,t,y+np) \kappa(dx),\quad y\in[0,p].
\end{align*}
Using the Gaussian estimates \eqref{eq.Gaussian.est}, we deduce that $\tilde q$ is well defined. Next, we notice that, for any constant $C>0$, the function $C\tilde\sigma^{-2}$ solves \eqref{eq.tildeq.adjoint}.
Then, using the regularity of $\hat q(x,\cdot,\cdot)$ and the $p$-periodicity of $\tilde\sigma$, we conclude that the function $g(t,y):=\tilde q(t,y)-C\tilde\sigma^{-2}(y)$ satisfies the following: $g\in L^2_{loc}((0,\infty),H^{1}([0,p],\RR))$, that $g(t,0)=g(t,p)$, $\partial_y\left[\tilde\sigma^2 g\right](t,0^+)=\partial_y\left[\tilde\sigma^2 g\right](t,p^-)$, and that 
\begin{align}
& \int_0^p g^2(t,y) \tilde\sigma^2(y) dy - \int_0^p g^2(s,y) \tilde\sigma^2(y) dy = -\int_s^t \int_0^p \left(\partial_y\left[\tilde\sigma^2(y) g(r,y) \right]\right)^2 dy dr,\quad 1\leq s\leq t.
\label{eq.Le1.EnergyEst}
\end{align}
It is clear that the choice $C=(\int_0^p \tilde\sigma^{-2}(y) dy)^{-1}$ ensures $\int_0^p g(t,y) dy=0$.

It suffices to show that $g(t,\cdot)$ converges to zero as $t\rightarrow\infty$. To this end, using the notation $\tilde g(r,y):=\tilde\sigma^2(y) g(r,y)$, we notice that
\begin{align*}
& \tilde g^2(t,y) = \left[\left( \int_0^p \tilde\sigma^{-2}(z) dz\right)^{-1} \int_0^p \tilde g(r,y) \tilde\sigma^{-2}(z) dz\right]^2 
=\left[\left( \int_0^p \tilde\sigma^{-2}(z) dz\right)^{-1} \int_0^p (\tilde g(r,y)-\tilde g(r,z)) \tilde\sigma^{-2}(z) dz\right]^2\\ 
&\leq \left( \int_0^p \tilde\sigma^{-2}(z) dz\right)^{-1} \int_0^p (\tilde g(r,y)-\tilde g(r,z))^2 \tilde\sigma^{-2}(z) dz
= \left( \int_0^p \tilde\sigma^{-2}(z) dz\right)^{-1} \int_0^p \left(\int_z^y \partial_x \tilde g(r,x) dx\right)^2 \tilde\sigma^{-2}(z) dz\\
&\leq \left( \int_0^p \tilde\sigma^{-2}(z) dz\right)^{-1} \int_0^p |y-z| \int_z^y \left(\partial_x \tilde g(r,x)\right)^2 dx \tilde\sigma^{-2}(z) dz
\leq p \int_0^p \left(\partial_x \tilde g(r,x)\right)^2 dx,\quad y\in[0,p].
\end{align*}
Hence, there exists a constant $C_3>0$ s.t.
\begin{align*}
&C_3 \int_0^p g^2(r,y) \tilde\sigma^2(y) dy \leq \int_0^p \left(\partial_y\left[\tilde\sigma^2(y) g(r,y) \right]\right)^2 dy,
\end{align*}
which yields
\begin{align*}
\frac{d}{dt}\int_0^p g^2(t,y) \tilde\sigma^2(y) dy
\leq - C_3 \int_0^p g^2(t,y) \tilde\sigma^2(y) dy,
\end{align*}
which in turn implies, via Gronwall's inequality, that
\begin{align*}
\int_0^p g^2(t,y) \tilde\sigma^2(y) dy
\leq \int_0^p g^2(1,y) \tilde\sigma^2(y) dy\, e^{- C_3 (t-1)},\quad t\geq 1.
\end{align*}
Using the Gaussian estimates \eqref{eq.Gaussian.est} and the definition of $\tilde q$, it is easy to see that the $L^2$ norm of $g(1,\cdot)$ can be bounded from above uniformly over all $\kappa$.
This implies that $g(t,\cdot)$ converges to zero in $L^2$ as $t\rightarrow\infty$.
Combining this conclusion with \eqref{eq.Le1.EnergyEst}, we obtain the second statement of the proposition.
\qed
\end{proof}

\medskip

Next, we verify the second assumption of Theorem \ref{thm:NY}.

\begin{proposition}
\label{prop:LatentPrice.verify}
Assume that $\Theta$ is finite, let Assumption \ref{ass:1} hold, and let $\nu^{\theta,\lambda}$ be as in Proposition \ref{le:ergod}.
Then, for any $\theta,\theta'\in\Theta$ with $\Sigma(\theta)=\Sigma(\theta')$ the equality
\begin{align}\label{eq.Background.nu.sep.inLe}
\int_{\RR^{2}} z f(y)\, \nu^{\theta,\lambda}(dz,dy)=\int_{\RR^{2}} z f(y)\, \nu^{\theta',\lambda}(dz,dy)
\quad \forall\,\lambda>0 \text{ and bounded Borel }f:\RR\rightarrow\RR
\end{align}
implies $\theta=\theta'$.
\end{proposition}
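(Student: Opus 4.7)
By Proposition~\ref{le:ergod}, the separating equality \eqref{eq.Background.nu.sep.inLe} is equivalent to
\begin{equation*}
\lim_{t\to\infty}\tilde\EE^\theta[\beta\mu(X_t)\,f(Y^\lambda_t)] = \lim_{t\to\infty}\tilde\EE^{\theta'}[\beta'\mu(X_t)\,f(Y^\lambda_t)],\qquad \forall\,\lambda>0 \text{ and bounded Borel } f.
\end{equation*}
Following the roadmap sketched in the discussion after Theorem~\ref{thm:NY}, I will choose specific $(f,\lambda)$ to extract from this asymptotic identity a small number of scalar invariants of $\theta$ which, combined with $\Sigma(\theta)=\Sigma(\theta')$ and Assumption~\ref{ass:1}, will force $\theta=\theta'$.

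The main computational tool is the stationarity balance $\tilde\EE^\theta_{\mathrm{stat}}[\mathcal{L}F]=0$ for the generator of the Markov process $(X\,\mmod\,1,\, Y^\lambda)$,
\begin{equation*}
\mathcal{L}F = \alpha\beta\mu(x)F_x+\tfrac{\sigma^2}{2}F_{xx}+(\beta\mu(x)-\lambda y)F_y+\tfrac{\bar\sigma^2}{2}F_{yy}+\tfrac{\sigma^2}{\alpha}F_{xy},
\end{equation*}
applied to $F(x,y)=g(x)y^k$ with smooth periodic $g$ and $k=0,1,2$. Taking $g\to\mu$ through a smooth periodic approximation and inserting the explicit stationary density $\chi_\theta(x)=e^{\gamma(x^2-x)}/\psi(\gamma)$ produces a hierarchy of linear identities among the stationary moments $\tilde\EE^\theta_{\mathrm{stat}}[\mu(X)^a(Y^\lambda)^b]$. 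The coefficients are explicit in $\alpha,\beta,\sigma^2,\lambda$ together with $\psi(\gamma),\psi'(\gamma)$ and the value $\chi_\theta(i)=1/\psi(\gamma)$ at integer crossings (the latter entering through the local-time/boundary corrections due to the unit jumps of $\mu$). Eliminating the unknown stationary moments among the $k=0,1,2$ identities yields $\lambda$-free scalar invariants of $\theta$.

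The second condition of Theorem~\ref{thm:NY} then reduces to matching a finite number of such invariants for $\theta$ and $\theta'$. One of them is $\Sigma(\theta)=\sigma^2/\phi(\gamma)$, matched by hypothesis; another, after simplification, involves precisely the ratio appearing in Assumption~\ref{ass:1}. By invertibility of that ratio, $\gamma(\theta)=\gamma(\theta')$; combining with $\Sigma(\theta)=\Sigma(\theta')$ gives $\sigma^2=(\sigma')^2$, hence $\alpha\beta=\alpha'\beta'$. A further invariant, for instance $\beta^2\tilde\EE^\theta_{\mathrm{stat}}[\mu(X)^2]=\beta^2(\psi'(\gamma)/\psi(\gamma)+1/4)$ obtained from the $k=1$ identity, then pins down $\beta$, and $\alpha=\gamma\sigma^2/\beta$ follows.

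The two main obstacles are (a) rigorously handling the unit jumps of $\mu$ at integers when applying It\^o's formula, which I would address either by It\^o--Tanaka with local times at the integer crossings of $X$, or by smooth periodic approximation $\mu_n\to\mu$ combined with the $H^1$-convergence from Proposition~\ref{le:ergod}; and (b) justifying the interchange of $\lim_{t\to\infty}$ with the expectation for the unbounded tests $f(y)=y,y^2$, which follows from the uniform $L^2$-bound on $Y^\lambda_t$ derived in Step~1 of the proof of Proposition~\ref{le:ergod}. The key novel difficulty relative to the simplified heuristic in the discussion after Theorem~\ref{thm:NY} is the correlation between the driver $B$ of $X$ and the observation noise $Y$: this appears in the cross term $(\sigma^2/\alpha)F_{xy}$ of $\mathcal{L}$ and produces the $\sigma^2/\alpha$ factors that encode the information about $\alpha$ in the extracted invariants.
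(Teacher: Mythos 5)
Your endgame matches the paper's: after Step-type reductions one obtains exactly two scalar invariants, the paper's \eqref{eq.Le2.Step1.main} (which is your $\beta^2\,\tilde\EE^\theta_{\mathrm{stat}}[\mu(X)^2]$ up to the sign convention, since $\int_0^1(x-1/2)^2\chi(x)\,dx=\tfrac{1}{2\gamma}(1/\psi(\gamma)-1)$) and \eqref{eq.Le2.Step2.main}, and the algebra leading through Assumption \ref{ass:1} to $\gamma=\gamma'$, then $\sigma^2=(\sigma')^2$, then $\beta=\beta'$, then $\alpha=\alpha'$ is the same. The gap is in the extraction mechanism. The only data supplied by \eqref{eq.Background.nu.sep.inLe} is the family $\beta\,\tilde\EE^\theta_{\mathrm{stat}}[\mu(X)f(Y^\lambda)]$, i.e.\ the \emph{single} spatial test function $\mu$ paired with arbitrary $f$ and arbitrary $\lambda$. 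Your stationarity identities $\tilde\EE_{\mathrm{stat}}[\mathcal{L}(g(x)y^k)]=0$ do not close against this data at fixed $\lambda$: writing $m_k(x)=\int y^k\rho(x,y)\,dy$ for the joint stationary density $\rho$, the $k=1$, $g=\mu$ identity reads (schematically) $\alpha\beta\int\mu m_1+\tfrac{\sigma^2}{2}\int\mu'' m_1+\beta\int\mu^2\chi-\lambda\int\mu m_1+\tfrac{\sigma^2}{\alpha}\int\mu'\chi=0$, and it contains (i) the boundary term $\int\mu''m_1=-\sum_i\int y\,\partial_x\rho(i,y)\,dy$, which is a new unknown not expressible through $\chi$, and (ii) the term $\alpha\beta\cdot\beta\int\mu m_1$, in which $\beta\int\mu m_1$ is matched but the prefactor $\alpha$ is precisely what you are trying to identify. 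Testing with further $g$'s or higher $k$ only introduces further unmatched cross-moments $\int g\,m_k$, $\int g''m_k$, so the proposed elimination does not terminate in $\lambda$-free invariants. This is why the paper does not work at fixed $\lambda$ but instead sends $\lambda\to\infty$ and $\lambda\downarrow0$, where the moment equations degenerate and become asymptotically explicit.

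Concretely, the two hard steps that your plan does not reach are: (a) the $\lambda\to\infty$ regime, where the invariant \eqref{eq.Le2.Step1.main} is produced by computing $\lim_{\lambda\to\infty}\lambda^2\tilde\EE^\theta\bigl[\int_0^te^{-\lambda(t-s)}\mu(X_s)ds\,\int_0^te^{-\lambda(t-s)}dW_s\bigr]$ via the short-time cross-variation $\tilde\EE^\theta[(\mu(X_s)-\mu(X_{s-u}))(W_s-W_{s-u})]\approx\sigma u(1-\chi(1))$ --- this is where the boundary correction $\chi(1)=1/\psi(\gamma)$ you allude to actually enters, and it requires the smooth approximation $\mu_n$, the Gaussian estimates \eqref{eq.Gaussian.est}, and the $H^1$ control of Proposition \ref{le:ergod}; and (b) the $\lambda\downarrow0$ regime, where $\tilde\EE^\theta[\mu(X_t)X^\lambda_t]\to\tilde\EE^\theta[\mu(X_t)X_t]$ and It\^o on $X_t^2$ brings in the effective diffusivity $\Sigma(\theta)=\lim_t\tilde\EE^\theta X_t^2/t$, yielding \eqref{eq.Le2.Step2.main}. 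Note that $\Sigma(\theta)$ is a large-time property of the \emph{unbounded} process $X$ and is not a functional of the stationary law of $(X\ \mmod\ 1,Y^\lambda)$ at any fixed $\lambda$ through finitely many moment identities; moreover, what is needed from the data is the combination $(\sigma^2-\Sigma(\theta))/\alpha^2$ --- the $\alpha^{-2}$ is what identifies $\alpha$ --- and this requires the uniform-in-$t$ interchange of limits proved in Lemma \ref{le:mainThm.pf.unif.Conv} via the spectral decomposition of the scale-transformed generator. As written, your proposal is a plausible plan whose central elimination step would fail, and the two asymptotic analyses that constitute the actual proof are absent.
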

\begin{proof}
The uniform bound on $\tilde\EE^\theta (Y^\lambda_t)^2$ established in the proof of Proposition \ref{le:ergod} implies that the first equality in \eqref{eq.LatentPrice.MainThm.Pf.Le1.eq1} can be extended to all polynomially bounded $\phi$. 
Thus, we obtain, for any $\lambda>0$,
\begin{align*}
&\beta\lim_{T\rightarrow\infty} \frac{1}{T} \int_0^T \tilde\EE^\theta \mu(X_t) Y^\lambda_t dt
= \int_{[-\beta/2,\beta/2]\times \RR} z y \nu^{\theta,\lambda}(dx,dy)\\
&= \int_{[-\beta'/2,\beta'/2]\times \RR} z y \nu^{\theta',\lambda}(dx,dy)
= \beta'\lim_{T\rightarrow\infty} \frac{1}{T} \int_0^T \tilde\EE^{\theta'} \mu(X_t) Y^\lambda_t dt.
\end{align*}
Next, we notice
\begin{align*}
& Y^\lambda_t = \int_0^t e^{-\lambda(t-s)} dY_s 
= \frac{1}{\alpha}X^\lambda_t + \sqrt{\bar\sigma^2 - \sigma^2/\alpha^2}\int_0^t e^{-\lambda(t-s)} d\tilde W_t,
\end{align*}
where $\tilde W$ is a Brownian motion independent of $X$ and
\begin{align*}
& X^\lambda_t := \int_0^t e^{-\lambda(t-s)} dX_s = \alpha\beta \int_0^t e^{-\lambda(t-s)} \mu(X_s) ds + \sigma \int_0^t e^{-\lambda(t-s)} dW_s.
\end{align*}
Then, 
\begin{align}
&\frac{\beta}{\alpha}\lim_{T\rightarrow\infty} \frac{1}{T} \int_0^T \tilde\EE^\theta \mu(X_t) X^\lambda_t dt
=\beta \lim_{T\rightarrow\infty} \frac{1}{T} \int_0^T \tilde\EE^\theta \mu(X_t) Y^\lambda_t dt\nonumber\\
&=\beta' \lim_{T\rightarrow\infty} \frac{1}{T} \int_0^T \tilde\EE^{\theta'} \mu(X_t) Y^\lambda_t dt
= \frac{\beta'}{\alpha'} \lim_{T\rightarrow\infty} \frac{1}{T} \int_0^T \tilde\EE^{\theta'} \mu(X_t) X^\lambda_t dt,
\label{eq.LatentPrice.MainThm.Pf.Le2.eq1}
\end{align}
for all $\lambda>0$.

Our goal is to deduce tractable equations for $\theta$ and $\theta'$ based on the above equality. We achieve this by analyzing the above in two different asymptotic regimes.

\medskip

{\bf Step 1}. First, we consider the regime $\lambda\rightarrow\infty$.
It is easy to see that $\sup_{t\geq0}\EE (X^\lambda_t)^2<\infty$.
Using the latter observation and applying It\^o's rule to $(X^\lambda)^2$,
\begin{align*}
d \tilde\EE^\theta (X^\lambda_t)^2 = \left( 2\alpha\beta \tilde\EE^\theta \mu(X_t) X^\lambda_t - 2 \lambda \tilde\EE^\theta (X^\lambda_t)^2 + \sigma^2 \right) dt,
\end{align*}
we deduce
\begin{align}
&\lim_{T\rightarrow\infty} \frac{1}{T} \int_0^{T} \alpha\beta \tilde\EE^\theta \mu(X_t) X^\lambda_t dt = \lim_{T\rightarrow\infty} \frac{1}{T} \int_0^{T} \tilde\EE^\theta (\lambda(X^\lambda_t)^2 - \sigma^2/2) dt.
\label{eq.LatentPrice.MainThm.Pf.Le2.muX.to.X2}
\end{align}
Next, we notice that
\begin{align}
&\lambda\left( \lambda\tilde\EE^\theta (X^\lambda_t)^2 - \sigma^2/2\right) = \lambda^2 \tilde\EE^\theta \left[(X^{\lambda}_t)^2 - \left(\sigma \int_0^t e^{-\lambda(t-s)} dW_s\right)^2 \right]\label{eq.LatentPrice.MainThm.Pf.Le2.largeLambda.1}\\
& = \alpha^2 \beta^2  \lambda^2 \tilde\EE^\theta \left(\int_0^t e^{-\lambda(t-s)} \mu(X_s) ds\right)^2
+ 2 \alpha\beta \sigma \lambda^2 \tilde\EE^\theta \int_0^t e^{-\lambda(t-s)} \mu(X_s) ds\, \int_0^t e^{-\lambda(t-s)} dW_s.\nonumber
\end{align}

Let us analyze the first term in the right hand side of \eqref{eq.LatentPrice.MainThm.Pf.Le2.largeLambda.1}:
\begin{align*}
& \left|\lambda^2 \tilde\EE^\theta \left(\int_0^t e^{-\lambda(t-s)} \mu(X_s) ds\right)^2 - \tilde\EE^\theta \mu^2(X_t)\right|\\
&= \left|\lambda^2 \tilde\EE^\theta \left(\int_0^t e^{-\lambda(t-s)} \mu(X_s) ds\right)^2 - \lambda^2 \tilde\EE^\theta \left(\int_0^t e^{-\lambda(t-s)} \mu(X_t) ds\right)^2\right| + O(e^{-\lambda t})\\
&\leq \lambda \int_0^t e^{-\lambda(t-s)} \tilde\EE^\theta |\mu(X_s)-\mu(X_t)| ds + O(e^{-\lambda t})\\
&\leq \lambda \int_0^{\lambda^{-1/2}} e^{-\lambda s} \tilde\EE^\theta |\mu(X_{t-s})-\mu(X_t)| ds + O\left(e^{-\lambda t} + e^{-\sqrt{\lambda}}\right)\\
&\leq \sup_{s\in[0,\lambda^{-1/2}]} \tilde\EE^\theta |\mu(X_{t-s})-\mu(X_t)| + O\left(e^{-\lambda t} + e^{-\sqrt{\lambda}}\right)\\
&\leq \tilde\PP^\theta(X_{t}\,\mmod\,1\notin [\lambda^{-1/8},1-\lambda^{-1/8}]) 
+ \tilde\PP^\theta(\sup_{s\in[0,\lambda^{-1/2}]} |W_s|\geq \lambda^{-1/8})\\
&+ C_1 \lambda \int_0^{\lambda^{-1/2}} e^{-\lambda s} \sqrt{s} ds
+ O\left(e^{-\lambda t} + e^{-\sqrt{\lambda}}\right).
\end{align*}
Note that the last three terms in the right hand side of the above vanish as $\lambda\rightarrow\infty$, uniformly over $t\geq1$.
In addition, Proposition \ref{le:ergod} yields
\begin{align*}
\lim_{\lambda\rightarrow\infty}\lim_{T\rightarrow\infty}\frac{1}{T} \int_0^T \tilde\PP^\theta(X_{t}\,\mmod\,1\notin [\lambda^{-1/8},1-\lambda^{-1/8}]) dt
=0.
\end{align*}
Using the above and Proposition \ref{le:ergod} again, we conclude that\footnote{Strictly speaking, the limits in $T$ in equation \eqref{eq.LatentPrice.MainThm.Pf.Le2.subseqEq} are taken along a subsequence along which the first limit exists. Such a subsequence may depend on $\lambda$, but it always exists due to the uniform boundedness of the associated time-average. To ease the notation, herein and in equation \eqref{eq.LatentPrice.MainThm.Pf.Le2.subseqEq.2}, we avoid the introduction of such a subsequence.}
\begin{align}
& \alpha^2 \beta^2 \lim_{\lambda\rightarrow\infty} \lim_{T\rightarrow\infty} \frac{1}{T} \int_0^T \lambda^2 \tilde\EE^\theta \left(\int_0^t e^{-\lambda(t-s)} \mu(X_s) ds\right)^2 dt
= \lim_{\lambda\rightarrow\infty} \lim_{T\rightarrow\infty} \frac{1}{T} \int_0^T  \tilde\EE^\theta \mu^2(X_t) dt\nonumber\\
&= \alpha^2 \beta^2 \int_0^1 (x-1/2)^2 \chi(x) dx
= \frac{\alpha^2\beta^2}{2\gamma} (1/\psi(\gamma) - 1).\label{eq.LatentPrice.MainThm.Pf.Le2.subseqEq}
\end{align}

\smallskip

Next, we analyze the second term in the right hand side of \eqref{eq.LatentPrice.MainThm.Pf.Le2.largeLambda.1}:
\begin{align}
&\lambda^2 \tilde\EE^\theta \int_0^t e^{-\lambda(t-s)} \mu(X_s) ds\, \int_0^t e^{-\lambda(t-s)} dW_s\nonumber\\
&=\lambda^2 \tilde\EE^\theta \int_0^t e^{-\lambda(t-s)} \mu(X_s) \int_0^s e^{-\lambda(s-u)} dW_u ds\nonumber\\
&=\lambda^3 \tilde\EE^\theta \int_0^t e^{-\lambda(t-s)} \mu(X_s) \int_0^s e^{-\lambda(s-u)}(W_s-W_u) du ds
+ O(\lambda^2 (1+t)^2 e^{-\lambda t})\label{eq.LatentPrice.MainThm.Pf.Le2.mu.times.W}\\
&=\lambda^3 \int_0^t e^{-\lambda(t-s)} \int_0^{1/\sqrt{\lambda}} e^{-\lambda u} \tilde\EE^\theta (\mu(X_s)-\mu(X_{s-u}))(W_s-W_{s-u}) du ds
+ O\left(\lambda^2\left(e^{-\sqrt{\lambda}} + (1+t)^2 e^{-\lambda t}\right)\right).\nonumber
\end{align}
Consider the expectation in the right hand side of the above and approximate $\mu$ with a sequence $\{\mu_n\}$ of $1$-periodic bounded smooth functions converging to $\mu$ in $L^2_{loc}$, as $n\rightarrow\infty$. For convenience, we also assume that $\mu_n(x)=\mu(x)$ for $x\in[1/n,1-1/n]$ and that $\mu_n'(x)\leq1$ for $x\in[-1/n,1/n]$. Then, the dominated convergence theorem yields
\begin{align*}
&\tilde\EE^\theta (\mu(X_s)-\mu(X_{s-u}))(W_s-W_{s-u}) 
= \lim_{n\rightarrow\infty} \tilde\EE^\theta (\mu_n(X_s)-\mu_n(X_{s-u}))(W_s-W_{s-u}).
\end{align*}
In addition, It\^o's formula gives us, for small $u>0$:
\begin{align}
&\tilde\EE^\theta (\mu_n(X_s)-\mu_n(X_{s-u}))(W_s-W_{s-u})\nonumber\\
&=\tilde\EE^\theta (\mu_n(X_s)-\mu_n(X_{s-u}))(X_s-X_{s-u}) + O\left(u \tilde\EE^\theta |\mu_n(X_s)-\mu_n(X_{s-u})| \right)
\label{eq.LatentPrice.MainThm.Pf.Le2.dmu.dW}\\
&= \sigma \int_{s-u}^s \tilde\EE^\theta \mu_n'(X_r) dr
+ \sigma \int_{s-u}^s \tilde\EE^\theta \left[(X_r-X_{s-u}) (\mu_n'(X_r) \mu(X_r) + \frac{\sigma^2}{2}\mu_n''(X_r))\right] dr\nonumber\\
&+ O\left(u \tilde\EE^\theta |\mu_n(X_s)-\mu_n(X_{s-u})| \right).\nonumber
\end{align}

Let us estimate the third term in the right hand side of \eqref{eq.LatentPrice.MainThm.Pf.Le2.dmu.dW}:
\begin{align*}
& \tilde\EE^\theta |\mu_n(X_s)-\mu_n(X_{s-u})|
= \int_0^1 q(s-u,x) \int_\RR |\mu_n(y)-\mu_n(x)| \bar q(x,u,y) dy dx\\
& \leq C_2 \left[\int_0^1 \left(\int_\RR |\mu_n(y)-\mu_n(x)| \bar q(x,u,y) dy\right)^2 dx\right]^{1/2},
\end{align*}
where $q(t,\cdot)$ is the density of $X_t\,\mmod\,1$ and $\bar q(x,t,\cdot)$ is the density of $X_t$ started from $x$.
In the above, we used the fact that $q(t,\cdot)$ is bounded in $L^2([0,1],\RR)$ uniformly over $t\geq1$ (see Proposition \ref{le:ergod}).
In addition, recalling the Gaussian estimates \eqref{eq.Gaussian.est} on $\bar q$, we conclude that the right hand side of the above vanishes as $u\downarrow0$, uniformly over $n\geq1$. Therefore, as $u\downarrow0$,
\begin{align*}
&O\left(u \tilde\EE^\theta |\mu_n(X_s)-\mu_n(X_{s-u})| \right) = o(u),
\end{align*}
uniformly over all $s\geq2$ and $n\geq1$.

Next, we estimate the second term in the right hand side of \eqref{eq.LatentPrice.MainThm.Pf.Le2.dmu.dW}:
\begin{align*}
&\tilde\EE^\theta \left[(X_r-X_{s-u}) (\mu_n'(X_r) \mu(X_r) + \frac{\sigma^2}{2}\mu_n''(X_r))\right]\\
&= \int_0^1 q(s-u,x) \int_\RR \left[(y-x) (\mu_n'(y) \mu(y) + \frac{\sigma^2}{2}\mu_n''(y))\right] \bar q(x,r-s+u,y) dy dx,\\
& \left|\int_\RR (y-x) (\mu_n'(y)-1+1) \mu(y) \bar q(x,r-s+u,y) dy\right|\\
&\leq C_3 \sum_{m\in\ZZ} \sup_{z\in[m-1/n,m+1/n]}|z-x| |\bar q(x,r-s+u,z)|
+ \int_\RR |y-x| |\bar q(x,r-s+u,y)| dy,\\
& \left|\int_\RR (y-x) \mu_n''(y) \bar q(x,r-s+u,y) dy\right|
= \left|\int_\RR (\mu_n'(y)-1+1) \partial_y\left[(y-x) \bar q(x,r-s+u,y)\right] dy\right|\\
&\leq C_3 \sum_{m\in\ZZ} \sup_{z\in[m-1/n,m+1/n]}\left[\bar q(x,r-s+u,z) + |z-x| |\partial_z \bar q(x,r-s+u,z)|\right].
\end{align*}
Using the above inequalities and the Gaussian estimates \eqref{eq.Gaussian.est}, we obtain:
\begin{align*}
&\left|\int_\RR (y-x) (\mu_n'(y)-1+1) \mu(y) \bar q(x,r-s+u,y) dy\right|
\leq C_4 (r-s+u)^{-1/2},\\
&\phantom{??????????????????????????} \forall\, r-s+u\in(0,1],\,\,x\in[0,1],\,\,n\geq1,\\
& \limsup_{n\rightarrow\infty}\left|\int_\RR (y-x) (\mu_n'(y)-1+1) \mu(y) \bar q(x,r-s+u,y) dy\right|
\leq C_5 (r-s+u)^{1/2},\\
&\phantom{??????????????????????????} \forall\, r-s+u\in(0,1],\,\,x\in(0,1).
\end{align*}
The above and the dominated convergence theorem yield
\begin{align*}
&\left|\tilde\EE^\theta \left[(X_r-X_{s-u}) (\mu_n'(X_r) \mu(X_r) + \frac{\sigma^2}{2}\mu_n''(X_r))\right]\right|
\leq C_4 (r-s+u)^{-1/2},\\
&\phantom{??????????????????????????} \forall\, r-s+u\in(0,1],\,\,n\geq1,\\
&\limsup_{n\rightarrow\infty} \left|\tilde\EE^\theta \left[(X_r-X_{s-u}) (\mu_n'(X_r) \mu(X_r) + \frac{\sigma^2}{2}\mu_n''(X_r))\right]\right|
\leq C_5 (r-s+u)^{1/2},\\
&\phantom{??????????????????????????} \forall\, r-s+u\in(0,1],\\
&\limsup_{n\rightarrow\infty} \left|\int_{s-u}^s \tilde\EE^\theta \left[(X_r-X_{s-u}) (\mu_n'(X_r) \mu(X_r) + \frac{\sigma^2}{2}\mu_n''(X_r))\right] dr\right| \leq C_6 u^{3/2},\quad \forall\, u\in(0,1].
\end{align*}

Next, we consider the first term in the right hand side of \eqref{eq.LatentPrice.MainThm.Pf.Le2.dmu.dW}:
\begin{align*}
&\sigma \int_{s-u}^s \tilde\EE^\theta \mu_n'(X_r) dr
=\sigma \int_{s-u}^s \int_0^1 q(r,x) \mu_n'(x)dx dr
=-\sigma \int_{s-u}^s \int_0^1 \partial_x q(r,x) \mu_n(x)dx dr\\
&\rightarrow -\sigma \int_{s-u}^s \int_0^1 \partial_x q(r,x) \mu(x)dx dr,
\end{align*}
as $n\rightarrow\infty$, where we used the fact that $\int_{s-u}^{s}\|q(r,\cdot)\|_{H^1([0,1],\RR)} dr<\infty$ (see the second part of Proposition \ref{le:ergod}). 
Collecting the above, recalling \eqref{eq.LatentPrice.MainThm.Pf.Le2.dmu.dW}, and using the second part of Proposition \ref{le:ergod} once more, we obtain:
\begin{align*}
&\tilde\EE^\theta (\mu(X_s)-\mu(X_{s-u}))(W_s-W_{s-u}) 
= \lim_{n\rightarrow\infty} \tilde\EE^\theta (\mu_n(X_s)-\mu_n(X_{s-u}))(W_s-W_{s-u})\\
& = \sigma u \left(-\int_0^1 \partial_x \chi(x) \mu(x)dx + g(s,u) + h(s,u)\right) = \sigma u\left(1 - \chi(1)+ g(s,u) + h(s,u)\right),
\end{align*}
for some bounded functions $g,h:\RR_+\times(0,1]\rightarrow\RR$, s.t. $\lim_{s\rightarrow\infty}\sup_{u\in(0,1]}|g(s,u)|=0=\lim_{u\downarrow0}\sup_{s\in\RR_+}|h(s,u)|$.
Recalling \eqref{eq.LatentPrice.MainThm.Pf.Le2.mu.times.W}, we deduce that
\begin{align*}
&\lambda^2 \tilde\EE^\theta \int_0^t e^{-\lambda(t-s)} \mu(X_s) ds\, \int_0^t e^{-\lambda(t-s)} dW_s\\
&=\lambda^3 \int_0^t e^{-\lambda(t-s)} \int_0^{1/\sqrt{\lambda}} e^{-\lambda u} \left[ \sigma u(1 - \chi(1)) + u g(s,u) + u h(s,u) \right] du ds\\
&+ O\left(\lambda^2\left(e^{-\sqrt{\lambda}} + (1+t)^2 e^{-\lambda t}\right)\right)\\
& = \sigma \lambda \int_0^t e^{-\lambda(t-s)} \int_0^{\sqrt{\lambda}} e^{-u} u \left[ 1 - \chi(1) + g(s,u/\lambda) + h(s,u/\lambda) \right] du ds\\
&+ O\left(\lambda^2\left(e^{-\sqrt{\lambda}} + (1+t)^2 e^{-\lambda t}\right)\right)
 = \sigma (1 - \chi(1)) \int_0^{\sqrt{\lambda}} e^{-u} u du\, \lambda \int_0^t e^{-\lambda(t-s)} ds\\
&+\sigma \lambda \int_{t/2}^t e^{-\lambda(t-s)} \int_0^{\sqrt{\lambda}} e^{-u} u \left[ g(s,u/\lambda) + h(s,u/\lambda) \right] du ds
+ O\left((1+\lambda^2)\left(e^{-\sqrt{\lambda}} + (1+t)^2 e^{-\lambda t/2}\right)\right).
\end{align*}
As $t\rightarrow\infty$, the upper limit of the absolute value of the second term in the right hand side of the above is bounded by
\begin{align*}
&\lim_{t\rightarrow\infty}\sigma \left(\sup_{s\in[t/2,t],\,u\in(0,1]}|g(s,u)| + \sup_{s\in\RR_+,\,u\in(0,1/\sqrt{\lambda}]}|h(s,u)|\right)\lambda \int_{t/2}^t e^{-\lambda(t-s)} ds
= \sigma \sup_{s\in\RR_+,\,u\in(0,1/\sqrt{\lambda}]}|h(s,u)|,
\end{align*}
which vanishes as $\lambda\rightarrow\infty$.

Finally, we can analyze the second term in the right hand side of \eqref{eq.LatentPrice.MainThm.Pf.Le2.largeLambda.1}:
\begin{align}
&2 \alpha\beta \sigma \lim_{\lambda\rightarrow\infty} \lim_{T\rightarrow\infty} \frac{1}{T} \int_0^T \lambda^2 \tilde\EE^\theta \int_0^t e^{-\lambda(t-s)} \mu(X_s) ds\, \int_0^t e^{-\lambda(t-s)} dW_s\, dt\label{eq.LatentPrice.MainThm.Pf.Le2.subseqEq.2}\\
& =2 \alpha\beta \sigma \lim_{\lambda\rightarrow\infty} \sigma (1 - \chi(1)) \int_0^{\sqrt{\lambda}} e^{-u} u du
= 2 \alpha\beta \sigma^2 (1 - \chi(1)) = \frac{2 \alpha^2 \beta^2}{\gamma} (1 - 1/\psi(\gamma)).
\nonumber
\end{align}

\smallskip

Collecting \eqref{eq.LatentPrice.MainThm.Pf.Le2.muX.to.X2}, \eqref{eq.LatentPrice.MainThm.Pf.Le2.largeLambda.1}, \eqref{eq.LatentPrice.MainThm.Pf.Le2.subseqEq} and \eqref{eq.LatentPrice.MainThm.Pf.Le2.subseqEq.2}, we conclude that
\begin{align*}
&\frac{\beta}{\alpha}\lim_{\lambda\rightarrow\infty}\lambda \lim_{T\rightarrow\infty} \frac{1}{T} \int_0^T \tilde\EE^\theta \mu(X_t) X^\lambda_t dt
= \frac{1}{\alpha^2}\lim_{\lambda\rightarrow\infty} \lim_{T\rightarrow\infty} \frac{1}{T} \int_0^{T} \lambda\tilde\EE^\theta (\lambda(X^\lambda_t)^2 - \sigma^2/2) dt\\
& = \frac{1}{\alpha^2}\lim_{\lambda\rightarrow\infty} \lim_{T\rightarrow\infty} \frac{1}{T} \int_0^{T} \alpha^2 \beta^2  \lambda^2 \tilde\EE^\theta \left(\int_0^t e^{-\lambda(t-s)} \mu(X_s) ds\right)^2\,dt\\
&+ \frac{1}{\alpha^2}\lim_{\lambda\rightarrow\infty}\lambda \lim_{T\rightarrow\infty} \frac{1}{T} \int_0^T 2 \alpha\beta \sigma \lambda^2 \tilde\EE^\theta \int_0^t e^{-\lambda(t-s)} \mu(X_s) ds\, \int_0^t e^{-\lambda(t-s)} dW_s\,dt\\
& = \frac{\beta^2}{2\gamma} (1/\psi(\gamma) - 1)
+ \frac{2 \beta^2}{\gamma} (1 - 1/\psi(\gamma)) = \frac{3 \beta^2}{2\gamma} (1 - 1/\psi(\gamma)).
\end{align*}
Repeating the above for $\theta'$ in place of $\theta$ and recalling \eqref{eq.LatentPrice.MainThm.Pf.Le2.eq1}, we obtain:
\begin{align}
& \frac{\beta^2}{\gamma(\theta)} (1 - 1/\psi(\gamma(\theta)))
= \frac{(\beta')^2}{\gamma(\theta')} (1 - 1/\psi(\gamma(\theta'))),
\label{eq.Le2.Step1.main}
\end{align}
which is the main result of Step 1.

\medskip

{\bf Step 2}. Herein, we analyze $\lambda\downarrow0$.
It is easy to see that, for any finite $t>0$, we have
\begin{align}\label{eq.mainThm.pf.exp.Xlambda.via.X}
\lim_{\lambda\downarrow0} \tilde\EE^\theta \mu(X_t) X^\lambda_t
=\tilde\EE^\theta \mu(X_t) X_t,\quad
\lim_{\lambda\downarrow0} \tilde\EE^{\theta'} \mu(X_t) X^\lambda_t
=\tilde\EE^{\theta'} \mu(X_t) X_t.
\end{align}

\smallskip

The following lemma shows that we can interchange the limits in $t\rightarrow\infty$ and in $\lambda\downarrow0$ in the above expressions.

\begin{lemma}\label{le:mainThm.pf.unif.Conv}
For any $\theta\in\Theta$, we have
\begin{align*}
\lim_{\lambda \downarrow 0} \sup_{t\geq1} \left| \tilde\EE^\theta \mu(X_t) X^\lambda_t - \tilde\EE^{\theta} \mu(X_t) X_t \right|=0
\end{align*}
\end{lemma}
\begin{proof}
\begin{align*}
& \tilde\EE^\theta \mu(X_t) X^\lambda_t - \tilde\EE^{\theta} \mu(X_t) X_t
= -\lambda \int_0^t e^{-\lambda(t-s)} \tilde\EE^\theta \mu(X_t) X_s ds
= -\lambda \int_0^t e^{-\lambda(t-s)} \tilde\EE^\theta \left( \tilde\EE^\theta \left[ \mu(X_t)\,\vert\, X_s \right] X_s\right) ds\\
&=: -\lambda \int_0^t e^{-\lambda(t-s)} \tilde\EE^\theta \left( \tilde g(t-s,X_s) X_s\right) ds.
\end{align*}
Then, to prove the statement of the lemma it suffices to show that
$$
\lim_{u\rightarrow\infty}\sup_{s\geq1} |\tilde\EE^\theta\left( \tilde g(u,X_s) X_s\right)| = 0.
$$
To this end we notice that $\tilde g(u,\cdot)$ is 1-periodic and that $\tilde g(u,x)=-\tilde g(u,1-x)$, $\tilde g(u,0)=\tilde g(u,1)=0$, and recall from Lemma \ref{le:ergod} that 
\begin{align}
\lim_{u\rightarrow\infty}\sup_{x\in[0,1]}|\tilde g(u,x)|=0.\label{eq.mainThm.pf.Le2.Step2.Le.eq1}
\end{align}

\smallskip

Next, we recall the scale transform of $X$:
\begin{align*}
&Z_t=S(X_t),\quad S(x):=\int_0^x \exp\left(-2\int_{1/2}^y \mu(z)/\sigma^2 dz\right) dy
=\int_0^x \exp\left(-\mu^2(y)/\sigma^2\right) dy,\\
& dZ_t=\exp\left(-2\int_{1/2}^{X_t} \mu(z)/\sigma^2(z) dz\right) dX_t 
- \exp\left(-2\int_{1/2}^{X_t} \mu(z)/\sigma^2(z) dz\right) \mu(X_t) dt\\
& = \tilde\sigma(Z_t) dW_t,
\quad \tilde\sigma(z)= S'(S^{-1}(z)) \sigma.
\end{align*}
Notice that
\begin{align*}
\tilde g(u,X_s) X_s = g(u,Z_s) S^{-1}(Z_s),\quad g(u,z):=\tilde g(u, S(z)),
\end{align*}
where $g(u,\cdot)$ satisfies the same properties as $\tilde g(u,\cdot)$, except that its period is $p=S(1)$.
Thus, our goal is to show that
\begin{align*}
\lim_{u\rightarrow\infty}\sup_{s\geq1} |\tilde\EE^\gamma\left( g(u,Z_s) S^{-1}(Z_s)\right)| = 0.
\end{align*}

To show the above, we recall some classical function-analytic results.
Denote by $H^2_0([0,p],\RR)$ the Sobolev space of functions $v$ with square integrable first and second derivatives and such that $v(x)=-v(p-x)$, $v(0)=v(p)=0$. This space is endowed with its natural Sobolev norm. 
The generator
\begin{align*}
\mathcal{L} := \frac{1}{2} \tilde\sigma^2(x) \partial^2_{xx}
\end{align*}
of $Z$ is well defined on $H^2_0([0,p],\RR)$
which is compactly and densely embedded in the Hilbert space $L^2([0,p],\tilde\sigma^{-2},\RR)$. It is easy to see that $\mathcal{L}$ is self-adjoint w.r.t. the cross-product of the latter space.
It is also well known (and can be verified directly) that the inverse $\mathcal{L}^{-1}$ is well defined on $L^2([0,p],\tilde\sigma^{-2},\RR)$ (notice that $\tilde\sigma(x)=\tilde\sigma(1-x)$). Then, $\mathcal{L}^{-1}$ is compact and the Riesz-Schauder theorem implies that there exists an orthonormal basis $\{v_i\}_{i=0}^\infty$ in $L^2([0,p],\tilde\sigma^{-2},\RR)$ and a sequence $\{-\lambda_i\}$, s.t. $\{|\lambda_i|\}$ is non-decreasing and converges to $\infty$, $\mathcal{L} v_i = -\lambda_i v_i$ (in particular, $v_i\in H^2_0([0,p],\RR)$). It is easy to deduce from the definition of $\mathcal{L}$ that $\lambda_i>0$.

\smallskip

As $g(u,\cdot)\in L^2([0,p],\tilde\sigma^{-2},\RR)$, we have:
\begin{align*}
& g(u,z) = \sum_{i=1}^\infty c_i(u) v_i(z),\quad \tilde\EE^\theta g(u,Z_t) S^{-1}(Z_t) = \sum_{i=1}^\infty c_i(u) \tilde\EE^\theta v_i(Z_t\,\mmod\,p) S^{-1}(Z_t).
\end{align*}
Extending $v_i$ and $\tilde\sigma$ to the $p$-periodic functions on $\RR$, we recall the definition of $H^2_0$ to conclude that the extended $v_i$ has a weak second derivative that is locally square integrable. Then, using It\^o's formula, we obtain:
\begin{align*}
& d\tilde\EE^\theta v_i(Z_t) S^{-1}(Z_t) =\tilde\EE^\theta \left( \frac{1}{2}\tilde\sigma^2(Z_t)\partial^2_{zz} \left[v_i(z) S^{-1}(z)\right](Z_t) \right) dt\\
& =\tilde\EE^\theta \left(\frac{1}{2}\tilde\sigma^2(Z_t) v_i''(Z_t) S^{-1}(Z_t) + \tilde\sigma^2(Z_t) v_i'(Z_t) (S^{-1})'(Z_t)
+ \frac{1}{2}\tilde\sigma^2(Z_t) v_i(Z_t) (S^{-1})''(Z_t) \right) dt\\
& = \left[-\lambda_i\tilde\EE^\theta v_i(Z_t) S^{-1}(Z_t) + \tilde\EE^\theta \left(\tilde\sigma^2(Z_t) v_i'(Z_t) (S^{-1})'(Z_t)
+ \frac{1}{2}\tilde\sigma^2(Z_t) v_i(Z_t) (S^{-1})''(Z_t) \right)\right] dt.
\end{align*}
Denote by $\tilde q(s,\cdot)$ the density of $Z_s\,\mmod\,p$. It is shown in the proof of Proposition \ref{le:ergod} that $\tilde q\in L^2_{loc}((0,\infty),H^{1}([0,p],\RR))$ and $\tilde q(s,0)=\tilde q(s,p)$.
In addition, we notice that $(S^{-1})'$, $(S^{-1})''$ are $p$-periodic and that $S^{-1}(0)=S^{-1}(p)$, $\tilde\sigma(0)=\tilde\sigma(p)$.
Using these observations, we continue:
\begin{align*}
& \tilde\EE^\theta v_i(Z_t) S^{-1}(Z_t) = e^{-\lambda_i (t-1)}\tilde\EE^\theta v_i(Z_1) S^{-1}(Z_1)\\
&+ \int_1^t e^{-\lambda_i(t-s)} \tilde\EE^\theta \left(\tilde\sigma^2(Z_s) v_i'(Z_s) (S^{-1})'(Z_s)
+ \frac{1}{2}\tilde\sigma^2(Z_s) v_i(Z_s) (S^{-1})''(Z_s) \right) ds\\
& = e^{-\lambda_i (t-1)}\tilde\EE^\theta v_i(Z_1) S^{-1}(Z_1)\\
&+ \int_1^t e^{-\lambda_i(t-s)} \int_0^p v_i(x)\left[\frac{1}{2}\tilde\sigma^2(x) (S^{-1})''(x)\tilde q(s,x)
- \partial_x\left(\tilde\sigma^2(x) (S^{-1})'(x) \tilde q(s,x)\right)\right] dx ds.
\end{align*}
Denote
\begin{align*}
u(s,x):= \frac{1}{2}\tilde\sigma^4(x) (S^{-1})''(x)\tilde q(s,x)
- \tilde\sigma^2(x)\partial_x\left(\tilde\sigma^2(x) (S^{-1})'(x) \tilde q(s,x)\right),\quad x\in[0,p].
\end{align*}
The second part of Proposition \ref{le:ergod} implies that $\tilde q(s,\cdot)$ is bounded in $L^2$ uniformly over all $s\geq1$.
It is also shown in the proof of Proposition \ref{le:ergod} (recall \eqref{eq.Le1.EnergyEst} and notice that $\partial_y\left[\tilde\sigma^2(y) g(r,y)\right]=\partial_y\left[\tilde\sigma^2(y) \tilde q(r,y)\right]$) that 
\begin{align*}
\sup_{t\geq1}\int_1^t \int_0^p \left(\partial_y\left[\tilde\sigma^2(y) \tilde q(r,y)\right]\right)^2 dy ds <\infty.
\end{align*}
Then, for any $\lambda>0$,
\begin{align*}
&\sup_{t\geq1}\int_1^t e^{-\lambda(t-s)} \|u(s,\cdot)\|_{L^2([0,p],\tilde\sigma^{-2},\RR)} ds<\infty.
\end{align*}
In addition, denoting by $\hat q(t,\cdot)$ the density of $Z_t$ and using the Gaussian estimates \eqref{eq.Gaussian.est}, we obtain:
\begin{align*}
\sum_{n\in\ZZ} \| \hat q(1,np+\cdot) S^{-1}(np+\cdot) \tilde\sigma^{2}(x)\|_{L^2([0,p],\tilde\sigma^{-2},\RR)} <\infty.
\end{align*}
Using the above observations, we conclude:
\begin{align*}
& \left|\tilde\EE^\theta g(u,Z_t) S^{-1}(Z_t) \right|
\leq \left|\sum_{i=1}^\infty c_i(u) e^{-\lambda_i (t-1)} \sum_{n\in\ZZ} \int_0^p v_i(x) S^{-1}(x+np) \hat q(1,np+x)dx\right|\\
&+\left|\int_1^t \sum_{i=1}^\infty c_i(u) e^{-\lambda_i(t-s)} \int_0^p v_i(x) u(s,x) dx ds\right|\\
&\leq \sum_{n\in\ZZ} \left(\sum_{i=1}^\infty c^2_i(u) \right)^{1/2} \left(\sum_{i=1}^\infty \left( \int_0^p v_i(x)S^{-1}(x+np) \hat q(1,np+x) \tilde\sigma^{2}(x) \tilde\sigma^{-2}(x)dx\right)^2\right)^{1/2}\\
& + \int_1^t e^{-\lambda_i (t-s)} \left(\sum_{i=1}^\infty c^2_i(u) \right)^{1/2} \left(\sum_{i=1}^\infty \left(\int_0^p v_i(x) u(s,x) \tilde\sigma^{-2}(x)dx\right)^2 \right)^{1/2}\\
&\leq \left(\sum_{i=1}^\infty c^2_i(u) \right)^{1/2} \sum_{n\in\ZZ} \| \hat q(1,np+\cdot) S^{-1}(np+\cdot) \tilde\sigma^{2}(x)\|_{L^2([0,p],\tilde\sigma^{-2},\RR)}\\
&+ \int_1^t e^{-\lambda_0(t-s)} \left(\sum_{i=1}^\infty c^2_i(u) \right)^{1/2} \|u(s,\cdot)\|_{L^2([0,p],\tilde\sigma^{-2},\RR)} ds\\
& \leq C \|g(u,\cdot)\|_{L^2([0,p],\tilde\sigma^{-2},\RR)},\quad \forall\, t\geq1.
\end{align*}
It remains to recall that the right hand side of the above converges to zero as $u\rightarrow\infty$ (see \eqref{eq.mainThm.pf.Le2.Step2.Le.eq1}).
\qed
\end{proof}

\medskip

Next, we compute the limits of the right hand sides in \eqref{eq.mainThm.pf.exp.Xlambda.via.X}, as $t\rightarrow\infty$.
We claim that
\begin{align}\label{eq.mainThm.pf.exp.XmuX.lim.t}
\lim_{t\rightarrow\infty} \frac{1}{t}\int_0^t \tilde\EE^\theta \mu(X_s) X_s ds
= \frac{\Sigma(\gamma)-\sigma^2}{2\alpha\beta}.
\end{align}
Indeed, applying It\^o's lemma to $X^2_t$, we obtain
\begin{align*}
d\tilde\EE^\theta X^2_t = \left( 2\alpha\beta\tilde\EE^\theta \mu(X_t) X_t + \sigma^2 \right) dt.
\end{align*}
In addition, it follows from the proof of Proposition \ref{prop:Sigma} that
\begin{align*}
\lim_{t\rightarrow\infty} \frac{1}{t} \tilde\EE^\theta X^2_t = \Sigma(\theta),
\end{align*}
which altogether yields \eqref{eq.mainThm.pf.exp.XmuX.lim.t}.

Combining Lemma \ref{le:mainThm.pf.unif.Conv} and \eqref{eq.mainThm.pf.exp.XmuX.lim.t}, we obtain
\begin{align*}
&\lim_{\lambda\downarrow0}\lim_{t\rightarrow\infty}\frac{1}{t}\int_0^t \tilde \EE^\theta \mu(X_s) X^\lambda_s ds = \frac{\Sigma(\theta)-\sigma^2}{2\alpha\beta}.
\end{align*}
Analogous equality holds for $\theta'$ in place of $\theta$.
Then, using \eqref{eq.LatentPrice.MainThm.Pf.Le2.eq1}, we obtain the main result of Step 2:
\begin{align}
& \frac{\sigma^2 - \Sigma(\theta)}{\alpha^2} = \frac{(\sigma')^2 - \Sigma(\theta')}{(\alpha')^2}>0.
\label{eq.Le2.Step2.main}
\end{align}

\medskip

We denote $\Sigma_0:=\Sigma(\theta)=\Sigma(\theta')$ and treat it as a constant. Then, treating the right hand sides of \eqref{eq.Le2.Step1.main} and \eqref{eq.Le2.Step2.main} as given constants, we obtain
\begin{align*}
&\beta^2 = C_7\frac{\gamma(\theta)}{1 - 1/\psi(\gamma(\theta))},
\quad \alpha^2 = C_8 (\sigma^2/\Sigma_0-1)=C_8 (\phi(\gamma(\theta))-1).
\end{align*}
Recalling that
\begin{align*}
\gamma^2=\alpha^2\beta^2/\sigma^4 = \alpha^2\beta^2/(\Sigma_0^2\phi^2(\gamma)),
\end{align*}
we obtain the equation
\begin{align}
\frac{1-1/\phi(\gamma)}{\gamma(1/\psi(\gamma)-1) \phi(\gamma)} = C_9.
\label{eq.LatentPrice.MainThm.Pf.Le2.Pf.last}
\end{align}
Recalling that the constant $C_9$ is in fact equal to the left hand side of \eqref{eq.LatentPrice.MainThm.Pf.Le2.Pf.last} with $\theta$ replaced by $\theta'$, we conclude that $\gamma=\gamma(\theta')$ solves the above equation. Assumption \ref{ass:1} implies that such solution is unique and, hence, $\gamma(\theta)=\gamma(\theta')$. The latter, along with $\sigma^2=\Sigma_0\phi(\gamma(\theta))=\Sigma_0\phi(\gamma(\theta'))=(\sigma')^2$ and \eqref{eq.Le2.Step1.main}, \eqref{eq.Le2.Step2.main}, yields $\theta=\theta'$.
\qed
\end{proof}

\section{Implementation and empirical analysis}
\label{se:implem}


Let us discuss a numerical algorithm for approximating $L_T(\theta)$ in the financial example of Section \ref{se:latentprice}. Note that the observed data is always discrete, even if it is observed at a very high frequency. Therefore, we aim to construct an approximation method that converges to the true $L_T(\theta)$ as the observation frequency increases. 
The main component in the computation of $L_T(\theta)$ (recall \eqref{eq.LatentPrice.L.def}) is 
\begin{align}
\mu^\theta_t = \tilde\EE^\theta\left(\mu(X_t)\,\vert\,\mathcal{F}^Y_t\right)
=  \tilde\EE^\theta\left[ \mu\left(\kappa Y_t + \tilde X_t\right)\,\vert\,\mathcal{F}^Y_t\right],
\label{eq.Implem.mu.via.tildeX.1}
\end{align}
where $\tilde X_t:=X_t - \kappa Y_t$ and $\kappa:=\frac{\sigma^2}{\alpha\bar\sigma^2}$.
It is easy to see that
\begin{align}
d\tilde X_t = \beta \frac{\alpha^2\bar\sigma^2 - \sigma^2}{\alpha\bar\sigma^2} \mu(\kappa Y_t + \tilde X_t)dt
+ \sigma \frac{\alpha^2\bar\sigma^2 - \sigma^2}{\alpha^2\bar\sigma^2} dB_t
- \sigma^2\frac{\sqrt{\alpha^2\bar\sigma^2-\sigma^2}}{\alpha^2\bar\sigma^2} d\tilde W_t,
\label{eq.Implem.dTildeX} 
\end{align}
that the quadratic covariation $\langle \tilde X,Y \rangle$ between $\tilde X$ and $Y$ is zero,
and that $\langle \tilde X, \tilde X \rangle_t = t \sigma^2 \left(1-\frac{\sigma^2}{\alpha^2\bar\sigma^2}\right)$.

Then, the standard results in stochastic filtering (see \cite[Section~5.6.1]{LototskyRozovsky2017Book}, and \cite{BainCrisan} for more) imply that the conditional distribution of $\tilde X_t$ given $\mathcal{F}^Y_t$ has a density that can be computed by normalizing the function $u(t,\cdot)$ which solves the Zakai equation:
\begin{align*}
&du = \left[\frac{1}{2} A^2 \partial^2_{xx} u - C \partial_x\left(\mu(x+\kappa Y_t) u\right) \right] dt
+ \frac{\beta}{\bar{\sigma}^2}\mu(x+\kappa Y_t) u dY_t,\\
& A^2:= \sigma^2 \frac{\alpha^2\bar\sigma^2-\sigma^2}{\alpha^2\bar\sigma^2},
\quad C:= \beta \frac{\alpha^2\bar\sigma^2 - \sigma^2}{\alpha\bar\sigma^2}.
\end{align*}
Note that, to compute $L_T$, it suffices to have the conditional density of $\tilde X_t\,\mmod\,1$ given $\mathcal{F}^Y_t$. Using the 1-periodicity of the coefficients in the above Zakai equation, we easily deduce that the latter density satisfies the same equation, with periodic boundary conditions at $x=0,1$. With a slight abuse of notation, we denote by $u(t,\cdot)$ the conditional density of $\tilde X_t\,\mmod\,1$ given $\mathcal{F}^Y_t$.

\smallskip

Next, we need to approximate $u$ numerically. 
To avoid stability issues and keep the implementation simple, we use the operator splitting method to approximate $u$. Namely, we consider two auxiliary equations
\begin{align}
&du = \left[\frac{1}{2} A^2 \partial^2_{xx} u - C \partial_x\left(\mu(x+\kappa Y_t) u\right) \right] dt,\label{eq.Implem.SPDEsplit.1}\\
&du = \frac{\beta}{\bar{\sigma}^2}\mu(x+\kappa Y_t) u dY_t,\label{eq.Implem.SPDEsplit.2}
\end{align}
equipped with periodic boundary conditions.
Equation \eqref{eq.Implem.SPDEsplit.2} can be solved for each $x$ separately via the operator:
\begin{align*}
G^{\Delta t}_2[u_0](x):= u_0(x) \exp\left[ -\frac{\beta^2}{2\bar\sigma^2} \int_0^{\Delta t} \mu^2(x+\kappa Y_t) dt
+ \frac{\beta}{\bar{\sigma}^2} \int_0^{\Delta t} \mu(x+\kappa Y_t) dY_t \right],
\end{align*}
where we fix $\Delta t=1$ (with the unit of time being $1$ second) and approximate the above integrals via the associated single-term Riemann sums.

To approximate the solution to \eqref{eq.Implem.SPDEsplit.1}, we notice that it is the Fokker-Planck equation for a diffusion $Z$ on the unit-length circle with the generator
\begin{align*}
\mathcal{L}:= \frac{1}{2} A^2 \partial^2_{xx} + C\mu(\cdot+\kappa Y_t)\partial_x.
\end{align*}
To approximate the transition kernel of $Z$, we first denote by $K^{\Delta t}(x,y)=\tilde K^{\Delta t}(|x-y|)$ the transition kernel of Brownian motion scaled by $A$ on the unit-length circle (i.e., the transition kernel of $(A W)\,\mmod\,1$, where $W$ is a Brownian motion) over a time interval of length $\Delta t$. It is easy to compute this kernel numerically by summing up the values of the associated Gaussian kernel. Then, we approximate the transition kernel of $Z$ via
\begin{align*}
\Gamma^{\Delta t, Y_t}(z,x):= K^{\Delta t}\left(z,\left(x-C\mu(x+\kappa Y_t)\Delta t\right)\,\mmod\,1\right)
=\tilde K^{\Delta t}\left(\left|z-\left(x-C\mu(x+\kappa Y_t)\Delta t\right)\,\mmod\,1\right|\right),
\end{align*}
and approximate the solution to \eqref{eq.Implem.SPDEsplit.1} via
\begin{align*}
&G^{\Delta t, Y_t}_1[u_0](x):= \int_0^1 \tilde K^{\Delta t}\left(\left|z-b(x,\Delta t,Y_t)\right|\right) u_0\left(z\right) dz,\\
& b(x,\Delta t,Y_t):=\eta + e^{-C\Delta t} (x-\eta),\quad \eta:= (1/2-\kappa Y_t)\,\mmod\,1.
\end{align*}

The proposed approximation of $u$ is then given by
\begin{align*}
\left(\prod_{i=T/\Delta t - 1}^0 G^{\Delta t}_2 G^{\Delta t,Y_{i\Delta t}}_1\right)[u_0],
\end{align*}
where $u_0$ is the density of $X_0\,\mmod\,1$ and where we use an equidistant grid with mesh $\Delta x:=0.01$ (with the unit of space being $\$0.01$) to discretize the space domain $[0,1]$.
Recalling \eqref{eq.Implem.mu.via.tildeX.2}, we obtain the following approximation of $\mu^\theta_t$:
\begin{align}
\mu^\theta_t =  \tilde\EE^\theta\left[ \mu\left(\kappa Y_t + \tilde X_t\right)\,\vert\,\mathcal{F}^Y_t\right]
= \int_0^1 \mu(\kappa Y_t + x) u(t,x) dx,
\label{eq.Implem.mu.via.tildeX.2}
\end{align}
and compute $L_T(\theta)$ via replacing the integrals in \eqref{eq.LatentPrice.L.def} by the associated Riemann sums.
Once $L_T(\cdot)$ is computed, we find its maximizer by an exhaustive search over the chosen grid for $\theta$.

\medskip

The computation of $\hat \Sigma$ is straightforward as given by Proposition \ref{prop:Sigma}. The constant $\bar\sigma$ is estimated via \eqref{eq.sigmaBar.est} by approximating the quadratic variation by a sum of squared increments of the order flow over $1$-second time intervals. The unknown $\beta$ is computed from the equation $\Sigma(\alpha,\beta,\sigma^2)=\hat\Sigma$ for each pair $(\alpha,\sigma^2)$ by a simple inversion (it is easy to see that $\Sigma(\alpha,\cdot,\sigma^2)$ is strictly monotone). The set of values for $(\alpha,\sigma^2)$ is defined by an equidistant rectangular grid for $(\alpha^2,\sigma^2)$ spanning the corresponding rectangle with $100$ points in each dimension (i.e., $10^4$ points in total).

\subsection{Empirical results}
\label{subse:empirical}

The proposed algorithm is tested using the price and orders data for three NASDAQ stocks\footnote{We thank S. Jaimungal for providing the financial data.} with tickers INTC, MSFT and MU, over the period Nov 3-26, 2014. Each computation of $L_T$ and of its maximizer is performed using the data for a single ticker over a single day, from 10:30am to 3pm Eastern Standard Time. Thus, we conduct $24$ experiments for each ticker. The choice of these specific assets is motivated by (i) their high liquidity and (ii) the fact that their bid-ask spread is equal to one tick ($\$0.01$) most of the time (recall that the latter is a necessary condition for the proposed latent price model to make sense).

\smallskip

The graphs of $\hat\Sigma$ and of estimated $\bar\sigma^2$ are given in Figures \ref{fig:Sigma.INTC}--\ref{fig:Sigma.MU}. The examples of normalized log-likelihood (i.e., $\frac{1}{T}\log L_T$) as a function of $(\alpha^2,\sigma^2)$ (recall that $\beta$ is uniquely determined by $(\alpha^2,\sigma^2)$ via the condition $\Sigma(\theta)=\hat\Sigma$, provided $\sigma^2>\hat\Sigma$) are given in Figures \ref{fig:L.INTC}--\ref{fig:L.MU}. For a better visualization, we provide cross-sectional plots, where each curve corresponds to a fixed $\alpha^2$ and varying $\sigma^2$ (right sides of Figures \ref{fig:L.INTC}--\ref{fig:L.MU}), and vice versa (left sides of Figures \ref{fig:L.INTC}--\ref{fig:L.MU}). The graphs of the resulting MLEs $(\hat\alpha^2,\hat\beta,\hat\sigma^2)$ are given in Figures \ref{fig:MLE.INTC}--\ref{fig:MLE.MU}.

\smallskip

Recall that the latent price model \eqref{eq.mainDyn.def}--\eqref{eq.A.def} contains another unknown parameter $\epsilon$, which determines when the bid-ask spread widens from one to two ticks (i.e., when $X$ is closer than $\epsilon$ to an integer). The estimation procedure was designed so that this parameter played no role in it. Nevertheless, once all the pother parameters are estimated, using the ergodicity of $X\,\mmod\,1$, one can easily construct a consistent estimator of $\epsilon$:
\begin{align*}
\hat\epsilon := \text{argmin}_{\varepsilon\in(0,1/2)}\left| \frac{1}{T} \int_0^T \bone_{[2,\infty)}(P^a_t-P^b_t) dt
\,-\, \int_{[0,1]\setminus [\epsilon,1-\epsilon]} \chi(x) dx \right|,
\end{align*}
where $\chi$ is the stationary density of $X\,\mmod\,1$ (see \eqref{eq.LatentPrice.mainThm.Pf.chi.def}).
These estimators are given in Figure \ref{fig:epsilon}. Not surprisingly, they are quite small, as the bid-ask spread of the chosen stocks remains at one tick most of the time.

\smallskip

In fact, the estimated values of $\epsilon$ are made even smaller by the fact that $\chi$ is U-shaped: see Figure \ref{fig:chi}. The latter property is very important and is used in \cite{N} to explain the concavity of expected price impact of meta-orders. Namely, it is shown in \cite{N} that the deviation of the wings of $\chi$ from one determines the relative change in the slope of the expected price impact curve, viewed as a function of executed volume, for a large meta-order. Figure \ref{fig:chi} shows that this relative difference can be significant, providing empirical justification for having the micro-drift $\mu$ in the dynamics \eqref{eq.mainDyn.def}. In addition, the values of $\hat\alpha\hat\beta$, plotted on Figure \ref{fig:alpha.beta} show that this quantity (which determines the strength of the micro-drift in $X$) remains non-negligible (to wit, $\alpha\beta=1$ means that the force created by the micro-drift, in the absence of any other forces, would move the latent price by around $\$0.0025$ per second), which also supports the presence of micro-drift in \eqref{eq.mainDyn.def}.

\section{Appendix}

\emph{Proof of Proposition \ref{prop:Sigma}}.
Throughout this proof, we fix $\bar\PP^\theta$ and assume that all expectations and ``a.s." statements hold w.r.t. this measure even though it is not explicitly cited.
Consider the two auxiliary continuous-time discrete-space processes $\tilde X$ and $\hat X$, defined recursively as
\begin{align*}
&\tilde X_t := X_{\tau(t)},\quad \tau(t):=\max_{\tau_i\leq t}\tau_i,
\quad\tau_i := \inf\{t\geq \tau_{i-1}:\, X_t\in\ZZ\setminus\{X_{\tau_{i-1}}\}\},
\quad i\geq1,\quad\tau_0=0,\\
&\hat X_t = \frac{1}{2}(B(X_t)+A(X_t)),
\end{align*}
where the functions $B$ and $A$ are defined in \eqref{eq.B.def}--\eqref{eq.A.def}.
Note that
\begin{align*}
|\tilde X_t - \hat X_t| \leq 1.
\end{align*}

\medskip

Next, we notice that
\begin{align*}
\frac{1}{t} (\tilde X_t - \tilde X_0)^2 = \frac{1}{t} \left(\xi_1 + \sum_{i=1}^{N_t} \xi_{i+1}\right)^2,\quad \xi_i:= \tilde X_{\tau_i} - \tilde X_{\tau_{i-1}},
\end{align*}
where $N_t$ is the largest $i\geq0$ s.t. $\tau_{i+1}\leq t$ (we set $N_t=0$ if there is no such $i$).
Notice also that $\{\xi_i\}$ are i.i.d. taking values $\pm1$ with probability $1/2$.
The reflection principle applied to $X$ on $[\tau_i,\tau_{i+1}]$ yields that $\{\xi_i\}_{i=2}^\infty$ is independent of $(N,\xi_1)$.\footnote{The argument goes as follows. For a given realization of the process $X$, we modify this process on the interval $[\tau_i,\tau_{i+1}]$, recursively over $i=1,2,\ldots$, so that the path $X-X_{\tau_i}$ remains the same on this interval with probability $1/2$ or becomes equal to $X_{\tau_i}-X$ with probability $1/2$, independent of everything else (the paths on the sub-intervals are then glued together so that the new process is continuous). Then, the symmetry of the drift $\mu$ yields that the new process satisfies the same SDE. The uniqueness of the solution to this SDE in law yields that the new process has the same distribution as $X$. As the joint distribution of $N_t$ and $\{\xi_i\}_{i=2}^\infty)$ depends only on the distribution of $X$, and the two are clearly independent for the new process, we obtain the desired conclusion.}
In addition, using the strong Markov property of $X$, we conclude that $\{\tau_{i+1}-\tau_i\}_{i=1}^\infty$ are i.i.d.. It is also easy to see that $\lim_{t\rightarrow\infty}N_t=\infty$ a.s.. Then, as $t\rightarrow\infty$, we have, a.s., that
$$
\frac{t}{N_t} = \frac{t-\tau_{N_t} + \tau_1}{N_t} +\frac{1}{N_t} \sum_{i=1}^{N_t} (\tau_{i+1}-\tau_{i})
\rightarrow \EE (\tau_2 - \tau_{1}),
$$
due to the strong law of large numbers. In the above, we also used the fact that 
$$
\frac{t-\tau_{N_t}}{N_t}\leq \frac{\tau_{N_t+1}-\tau_{N_t}-\EE (\tau_{2}-\tau_1)}{N_t} + \frac{\EE (\tau_{2}-\tau_1)}{N_t}
=: M_{N_t} + \frac{\EE (\tau_{2}-\tau_1)}{N_t},
$$
and that $(M_n)$ converges to zero a.s., as $n\rightarrow\infty$ (which can be shown via Borel-Cantelli, using $\EE (\tau_{n+1}-\tau_n)=\EE (\tau_2-\tau_1)<\infty$).
Thus, for any bounded continuous function $\phi$, by conditioning on $(N,\xi_1)$ and applying the central limit theorem, we obtain
$$
\lim_{t\rightarrow\infty}\EE \phi\left( \frac{1}{t} (\tilde X_t - \tilde X_0)^2\right) = \EE \phi\left(\left(\EE (\tau_2 - \tau_{1})\right)^{-1}  \xi^2 \right),
\quad \xi\sim N(0,1).
$$

Next, we use Taylor's formula and Fubini's theorem to deduce that, for any $\delta>0$ there exists $\varepsilon>0$ s.t.
$$
\EE \exp\left[-\varepsilon(\tau_2-\tau_1 - \EE(\tau_2-\tau_1))\right] < \exp(\varepsilon \delta).
$$
Using this observation, along with $\EE (\tau_2 - \tau_{1})>0$ and Chebyshev's inequality, we obtain:
\begin{align}
&\EE \frac{1}{t^2} (\tilde X_t - \tilde X_0)^4
= \frac{1}{t^2} \left(\EE \xi^4_1 + 6\EE \xi^2_1\left(\sum_{i=1}^{N_t} \xi_{i+1}\right)^2 + \EE \left(\sum_{i=1}^{N_t} \xi_{i+1}\right)^4\right)\nonumber\\
&= \frac{1}{t^2} \left(1 + 6\EE N_t + \EE N^2_t\right)
\leq \frac{C}{t^2} (1+\EE N^2_t) = \frac{C}{t^2} \left(1 + \sum_{i=2}^{\infty}i\PP(\tau_i\leq t)\right)\nonumber\\
&= \frac{C}{t^2} \left(1 + \EE\sum_{i=2}^{\infty}i\PP\left[-\left(\tau_i-\tau_1-i\EE(\tau_2-\tau_1)\right)\geq i\EE(\tau_2-\tau_1)-t+\tau_1\,\vert\,\tau_1\right]\right)\nonumber\\
&\leq \frac{C}{t^2} \left(1 + Ct^2 + \sum_{i\geq 2t/\EE(\tau_2-\tau_1)} i\PP\left[-\left(\tau_i-\tau_1-i\EE(\tau_2-\tau_1)\right)\geq i\EE(\tau_2-\tau_1)/2\right]\right)\label{eq.MME.tildeX.4.est}\\
&\leq \frac{C}{t^2} \left(1 + Ct^2 + C\sum_{i=1}^{\infty} i\EE\exp\left[-\varepsilon(\tau_i-\tau_1-i\EE(\tau_2-\tau_1))\right]
\exp\left[-i\varepsilon\EE(\tau_2-\tau_1)/2\right] \right)\nonumber\\
&= \frac{C}{t^2} \left(1 + Ct^2 + C\sum_{i=1}^{\infty} i\left[\EE\exp\left(-\varepsilon\left(\tau_2-\tau_1-\EE(\tau_2-\tau_1)\right)\right)
/\exp\left(\varepsilon\EE\left(\tau_2-\tau_1\right)/2\right)\right]^i \right),\nonumber
\end{align}
and notice that the right hand side of the above is bounded uniformly over $t\geq1$, provided $\varepsilon>0$ is sufficiently small.
This observation allows us to use $\phi(x)=x$:
\begin{equation}\label{eq.MME.tildeX.2.conv.const}
\lim_{t\rightarrow\infty}\EE \frac{1}{t} (\tilde X_t - \tilde X_0)^2 = 1/\EE (\tau_2 - \tau_{1}).
\end{equation}

\smallskip

Next, we notice that the estimate \eqref{eq.MME.tildeX.4.est} is uniform over all admissible $\tilde X_0=X_0$.
In addition, the strong Markov property of $X$ allows us to apply \eqref{eq.MME.tildeX.4.est} to $\EE(\tilde X_{\tau_1+t} - \tilde X_{\tau_1})^4/t^2$ and, in particular, to obtain
$$
\lim_{t\rightarrow\infty}\EE \frac{1}{t} (\tilde X_{\tau_1+t} - \tilde X_{\tau_1})^2 = 1/\EE (\tau_2 - \tau_{1}),
$$
where the rate of the latter convergence does not depend on $X_0$.
Let us show that the rate of convergence in \eqref{eq.MME.tildeX.2.conv.const} can also be estimated uniformly over all admissible $X_0$.
To this end, we observe:
\begin{align*}
&\left|\EE \frac{1}{t} \left((\tilde X_t - \tilde X_0)^2-(\tilde X_{\tau_1+t} - \tilde X_{\tau_1})^2\right)\right|\\
&\leq \frac{1}{t}\EE \left(|\tilde X_{\tau_1+t} - \tilde X_t| + |\tilde X_{\tau_1}-\tilde X_0|\right)
\left(|\tilde X_t - \tilde X_0| + |\tilde X_{\tau_1+t} - \tilde X_{\tau_1}|\right)\\
&\leq \frac{C}{\sqrt{t}}\left[\EE \left(1+ (X_{\tau_1+t} - X_t)^2 + (X_{\tau_1}-X_0)^2\right)\right]^{1/2}
\left[\left(\left[\EE\frac{1}{t^2}(\tilde X_t - \tilde X_0)^4\right]^{1/2} + \left[\EE\frac{1}{t^2}(\tilde X_{\tau_1+t} - \tilde X_{\tau_1})^4\right]^{1/2} \right)\right]^{1/2}\\
&\leq \frac{C}{\sqrt{t}}\left[1+ \EE(X_{\tau_1+t} - X_t)^2 + \EE(X_{\tau_1}-X_0)^2\right]^{1/2}.
\end{align*}
To estimate the second expectation in the right hand side of the above, we apply It\^o's formula to $(X_{t\wedge\tau_1} - X_0)^2$ and obtain
\begin{align*}
& \EE(X_{t\wedge\tau_1} - X_0)^2 = \EE \int_0^{t\wedge\tau_1} \left(\alpha^2\bar\sigma^2  + \sigma^2
+ 2 (X_{s} - X_0)\mu(X_s)\right) ds
\leq \left(\alpha^2\bar\sigma^2  + \sigma^2 + 1\right) \EE (t\wedge\tau_1).
\end{align*}
Considering $t\rightarrow\infty$, using the dominated convergence in the left hand side of the above, and the monotone convergence in the right hand side, and applying the analogous estimate to $\EE(X_{\tau_1+t} - X_{t})^2$, we conclude
\begin{align*}
&\left|\EE \left(\frac{1}{t} \left((\tilde X_t - \tilde X_0)^2-(\tilde X_{\tau_1+t} - \tilde X_{\tau_1})^2\right)\right)\right|
\leq \frac{C}{\sqrt{t}}\left[1+ \EE \tau_1\right]^{1/2}.
\end{align*}
As the conditional expectation $\EE (\tau_1\,\vert\,X_0=x)$ is computed explicitly below, we easily deduce that the right hand side of the above is bounded uniformly over all admissible $X_0$, which in turn yields the desired uniformity of the convergence in \eqref{eq.MME.tildeX.2.conv.const}.

\smallskip

Next, we notice that, as $t\rightarrow\infty$,
\begin{align*}
&\left|\EE \left(\frac{1}{t} (\tilde X_t - \tilde X_0)^2\right) - \EE \left(\frac{1}{t} (\hat X_t - \hat X_0)^2\right) \right|
\leq \frac{2}{t} \EE (|\tilde X_t - \tilde X_0| + |\hat X_t - \hat X_0|)\\
&\leq \frac{4}{t} \EE |X_t-X_0| + O(1/t) 
\leq 4 \left[\EE \left(\frac{1}{t}\int_0^t \mu(X_s) ds - \EE \frac{1}{t}\int_0^t \mu(X_s) ds \right)^2\right]^{1/2}\\
& + 4\left| \EE \frac{1}{t}\int_0^t \mu(X_s) ds \right| + O(1/\sqrt{t})
\rightarrow 0,
\end{align*}
due to the ergodicity of $X\,\text{mod}\,1$ (see Proposition \ref{le:ergod} and Step 2 of its proof), the dominated convergence theorem, and the observation $\int_0^1 \mu(x) \chi(x) dx = 0$. 
Notice also that the above convergence is uniform over all distributions of $X_0$ (see Proposition \ref{le:ergod} and Step 2 of its proof).
Thus, we conclude that
\begin{equation}\label{eq.MME.hatX.2.conv.const}
\lim_{t\rightarrow\infty}\EE \left(\frac{1}{t} (\hat X_t - \hat X_0)^2\right) = 1/\EE (\tau_2 - \tau_{1}),
\end{equation}
uniformly over all $X_0$.

\medskip

Let us now complete the proof of Proposition \ref{prop:Sigma}, by establishing
\begin{equation}\label{eq.sepZ.MainEq}
\lim_{T\rightarrow\infty}\frac{1}{T} \sum_{k=1}^{M(T)} \left(\hat X_{kT/M(T)} - \hat X_{(k-1)T/M(T)}\right)^2
= 1/\EE (\tau_i - \tau_{i-1}),
\end{equation}
and showing that the right hand side of the above equals $\Sigma(\theta)$.

To show that the convergence \eqref{eq.sepZ.MainEq} holds in probability, we proceed as follows:
\begin{align}
&\EE\left( \frac{1}{M} \sum_{k=1}^{M} \left[\frac{M}{T}\left(\hat X_{kT/M} - \hat X_{(k-1)T/M}\right)^2 - \EE_{X_{(k-1)T/M}} \frac{M}{T}\left(\hat X_{kT/M} - \hat X_{(k-1)T/M}\right)^2\right]\right)^2\nonumber\\
&=\frac{1}{T^2} \sum_{k,j=1}^M \EE\left(\left[\left(\hat X_{kT/M} - \hat X_{(k-1)T/M}\right)^2 - \EE_{X_{(k-1)T/M}} \left(\hat X_{kT/M} - \hat X_{(k-1)T/M}\right)^2\right]\cdot\right.\nonumber\\
&\left.\phantom{?????????????????????}
\left[\left(\hat X_{jT/M} - \hat X_{(j-1)T/M}\right)^2 - \EE_{X_{(j-1)T/M}} \left(\hat X_{jT/M} - \hat X_{(j-1)T/M}\right)^2\right]\right)\nonumber\\
&=\frac{1}{M^2} \sum_{k=1}^M \frac{M^2}{T^2} \EE\left[\left(\hat X_{kT/M} - \hat X_{(k-1)T/M}\right)^2 - \EE_{X_{(k-1)T/M}} \left(\hat X_{kT/M} - \hat X_{(k-1)T/M}\right)^2\right]^2,\label{eq.MME.MC.est.varEst}
\end{align}
where we eliminated the terms with $k\neq j$ by using the tower property of conditional expectation.
To estimate the remaining terms, we notice that 
\begin{align*}
&\EE \frac{1}{t^2} (\hat X_t - \hat X_0)^4
\leq  C \EE \frac{1}{t^2} (1 + (\tilde X_t - \tilde X_0)^4),
\end{align*}
and the right hand side of the above is estimated in \eqref{eq.MME.tildeX.4.est} uniformly over $t\geq 1$ and over all admissible $X_0$. This implies that the right hand side of \eqref{eq.MME.MC.est.varEst} converges to zero as $M\rightarrow\infty$.
It only remains to apply \eqref{eq.MME.hatX.2.conv.const}, to conclude that, a.s.,
\begin{align*}
&\frac{1}{M} \sum_{k=1}^{M} \EE_{X_{(k-1)T/M}} \frac{M}{T}\left(\hat X_{kT/M} - \hat X_{(k-1)T/M}\right)^2
\rightarrow 1/\EE (\tau_2 - \tau_{1}),
\end{align*}
as $T/M\rightarrow\infty$, which yields \eqref{eq.sepZ.MainEq}.

It only remains to compute $1/\EE (\tau_2 - \tau_{1})$.
Denote by $\iota$ the first exit time of $X$ from the interval $(-1,1)$ and consider $u(x):=\EE_x\iota$ for $x\in[-1,1]$. Then, it suffices to compute $u(0)=\EE_0\tau_1=\EE (\tau_2 - \tau_{1})$. The Feynman-Kac formula implies that $u$ satisfies
\begin{equation*}
\frac{1}{2}(\alpha^2\bar\sigma^2 + \sigma^2) u_{xx} +\alpha\beta \mu(x) u_x = -1,\quad x\in(-1,1),
\end{equation*}
subject to $u(-1)=u(1)=0$. Solving this equation explicitly, we obtain
\begin{align*}
& u(x) = - \frac{2}{\sigma^2}\int_{-1}^x\exp\left[-\frac{2\alpha\beta}{\sigma^2} \int_0^z \mu(r) dr\right] \int_0^z \exp\left[\frac{2\alpha\beta}{\sigma^2} \int_0^y \mu(r) dr\right] dy dz,\\
& u(0) = \frac{2}{\sigma^2} \int_{0}^1\exp\left[-\frac{\alpha\beta}{\sigma^2} (z-1/2)^2\right] \int_0^z \exp\left[\frac{\alpha\beta}{\sigma^2} (y-1/2)^2\right] dy dz
= 1/\Sigma(\theta).
\end{align*}
\qed







	
	


\begin{figure}
\begin{center}
    \includegraphics[width = 0.5\textwidth]{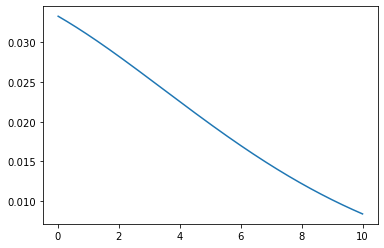}
  \caption{Graph of the function \eqref{eq.TheFunc}.}
    \label{fig:1}
\end{center}
\end{figure}

\begin{figure}
\begin{center}
  \begin{tabular} {cc}
    {
    \includegraphics[width = 0.45\textwidth]{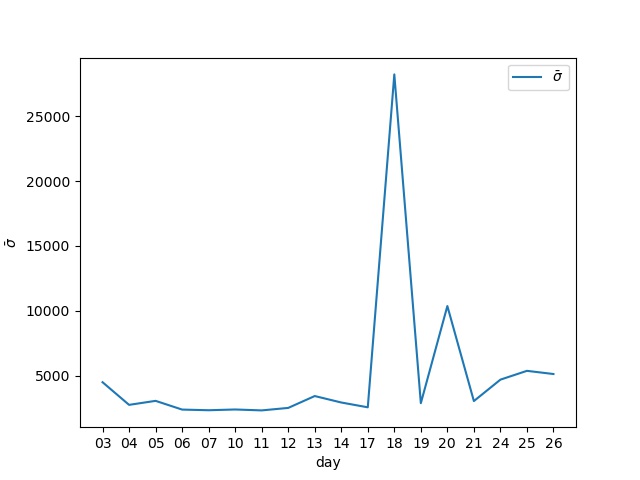}
    } & {
   \includegraphics[width = 0.45\textwidth]{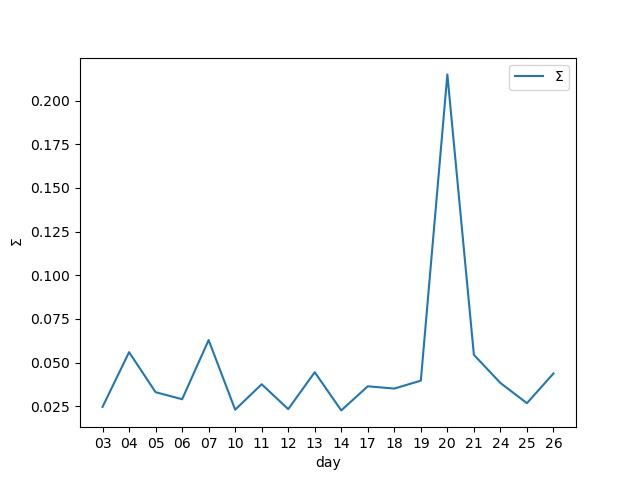}
    }\\
  \end{tabular}
  \caption{Estimated $\bar\sigma^2$ and $\hat\Sigma$ for INTC, across the estimation days.}
    \label{fig:Sigma.INTC}
\end{center}
\end{figure}

\begin{figure}
\begin{center}
  \begin{tabular} {cc}
    {
    \includegraphics[width = 0.45\textwidth]{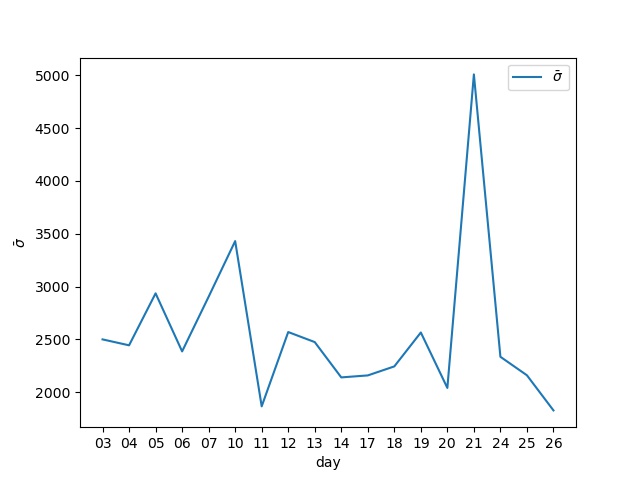}
    } & {
   \includegraphics[width = 0.45\textwidth]{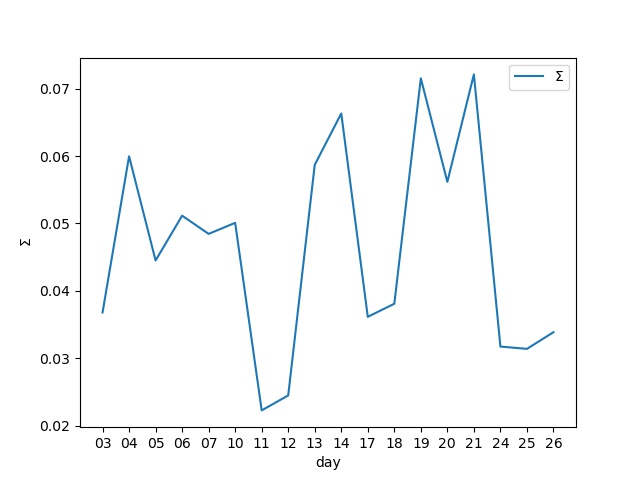}
    }\\
  \end{tabular}
  \caption{Estimated $\bar\sigma^2$ and $\hat\Sigma$ for MSFT, across the estimation days.}
    \label{fig:Sigma.MSFT}
\end{center}
\end{figure}

\begin{figure}
\begin{center}
  \begin{tabular} {cc}
    {
    \includegraphics[width = 0.45\textwidth]{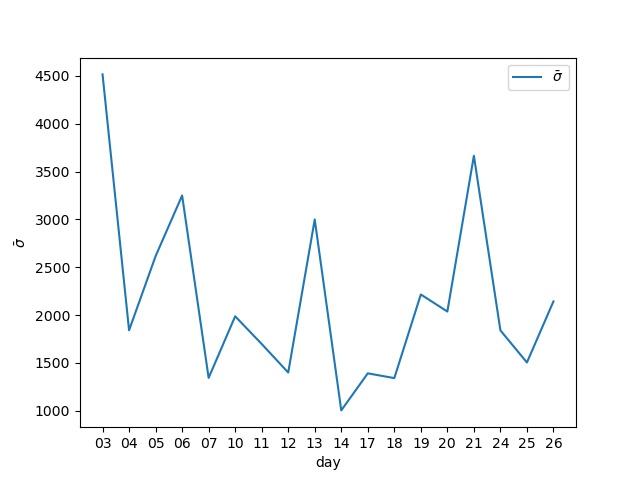}
    } & {
   \includegraphics[width = 0.45\textwidth]{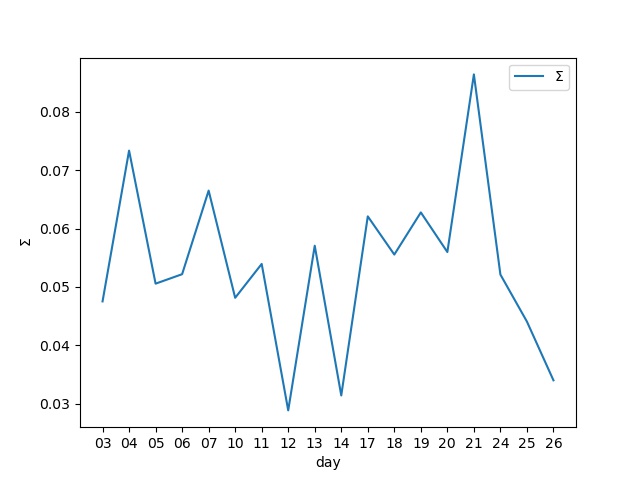}
    }\\
  \end{tabular}
  \caption{Estimated $\bar\sigma^2$ and $\hat\Sigma$ for MU, across the estimation days.}
    \label{fig:Sigma.MU}
\end{center}
\end{figure}

\begin{figure}
\begin{center}
  \begin{tabular} {cc}
    {
    \includegraphics[width = 0.45\textwidth]{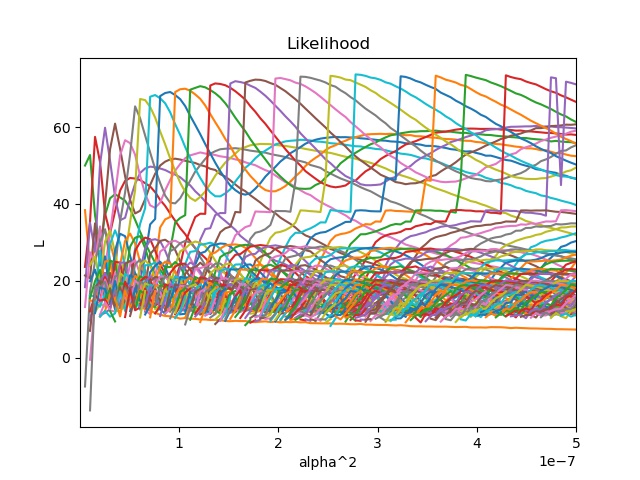}
    } & {
   \includegraphics[width = 0.45\textwidth]{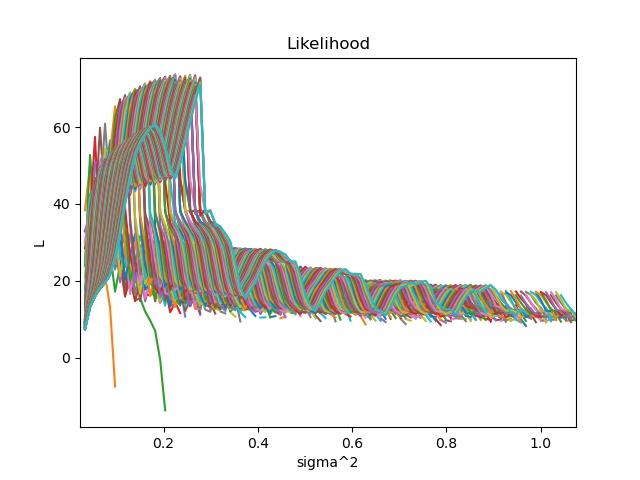}
    }\\
  \end{tabular}
  \caption{Cross-sectional plots of $\frac{1}{T}\log L_T(\cdot)$ for INTC computed on Nov 3, 2014. Left: $\frac{1}{T}\log L_T(\cdot)$ as a function of $\alpha^2$ (for each fixed value of $\sigma^2$). Right: $\frac{1}{T}\log L_T(\cdot)$ as a function of $\sigma^2$ (for each fixed value of $\alpha^2$).}
    \label{fig:L.INTC}
\end{center}
\end{figure}

\begin{figure}
\begin{center}
  \begin{tabular} {cc}
    {
    \includegraphics[width = 0.45\textwidth]{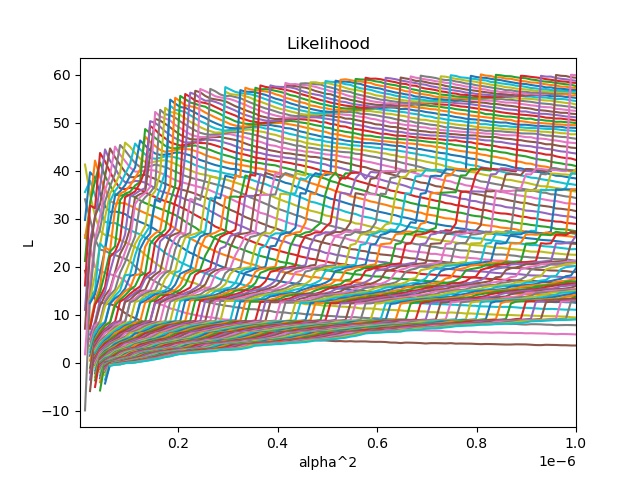}
    } & {
   \includegraphics[width = 0.45\textwidth]{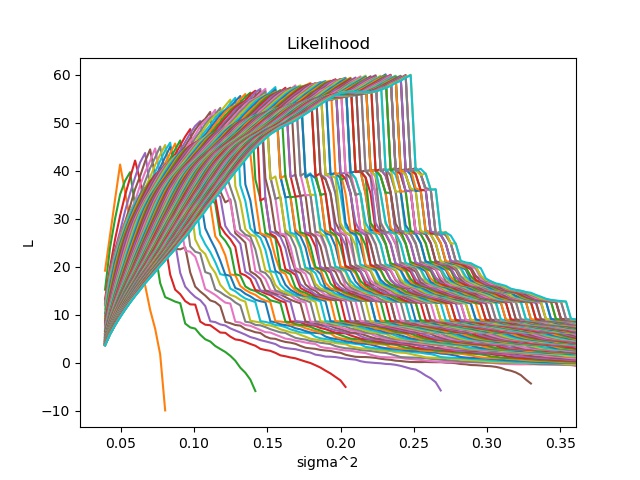}
    }\\
  \end{tabular}
  \caption{Cross-sectional plots of $\frac{1}{T}\log L_T(\cdot)$ for MSFT computed on Nov 3, 2014. Left: $\frac{1}{T}\log L_T(\cdot)$ as a function of $\alpha^2$ (for each fixed value of $\sigma^2$). Right: $\frac{1}{T}\log L_T(\cdot)$ as a function of $\sigma^2$ (for each fixed value of $\alpha^2$).}
    \label{fig:L.MSFT}
\end{center}
\end{figure}

\begin{figure}
\begin{center}
  \begin{tabular} {cc}
    {
    \includegraphics[width = 0.45\textwidth]{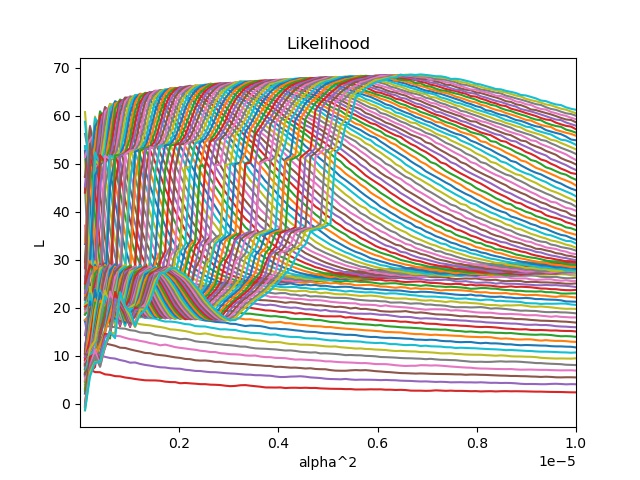}
    } & {
   \includegraphics[width = 0.45\textwidth]{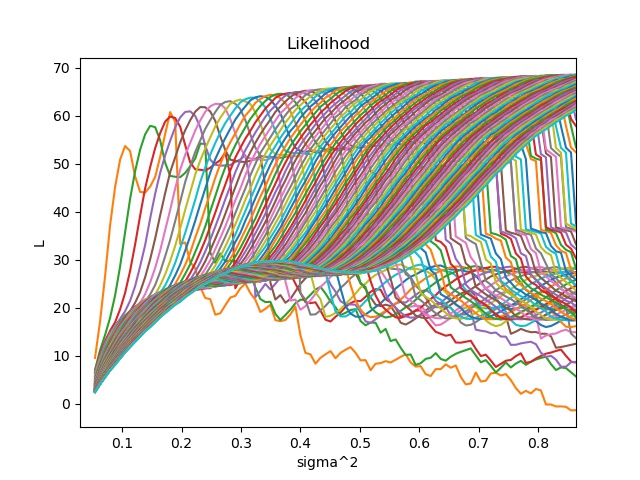}
    }\\
  \end{tabular}
  \caption{Cross-sectional plots of $\frac{1}{T}\log L_T(\cdot)$ for MU computed on Nov 3, 2014. Left: $\frac{1}{T}\log L_T(\cdot)$ as a function of $\alpha^2$ (for each fixed value of $\sigma^2$). Right: $\frac{1}{T}\log L_T(\cdot)$ as a function of $\sigma^2$ (for each fixed value of $\alpha^2$).}
    \label{fig:L.MU}
\end{center}
\end{figure}

\begin{figure}
\begin{center}
  \begin{tabular} {cc}
    {
    \includegraphics[width = 0.45\textwidth]{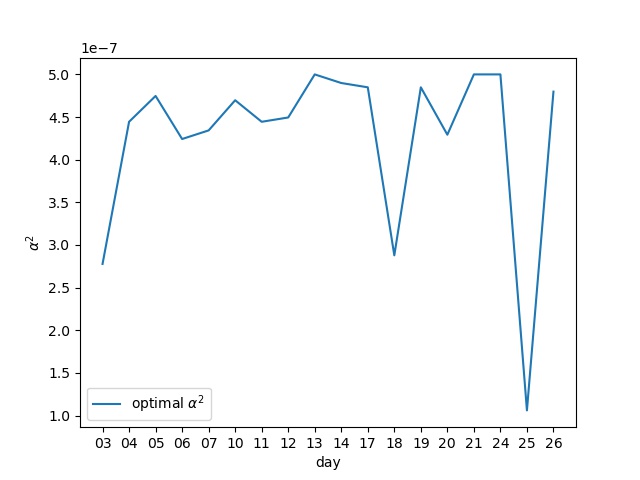}
    } & {
   \includegraphics[width = 0.45\textwidth]{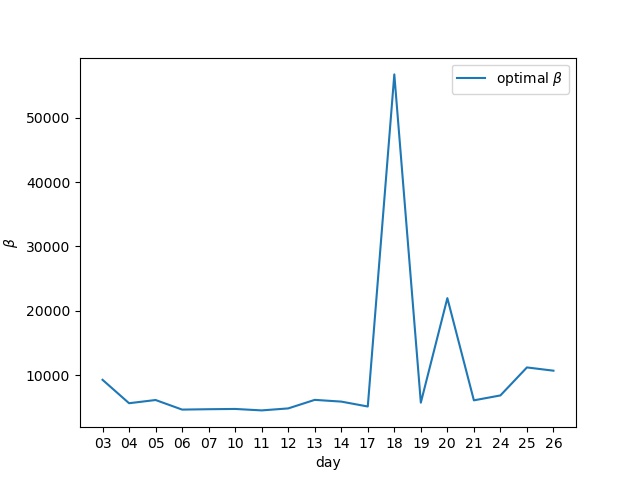}
    }\\
    {
    \includegraphics[width = 0.45\textwidth]{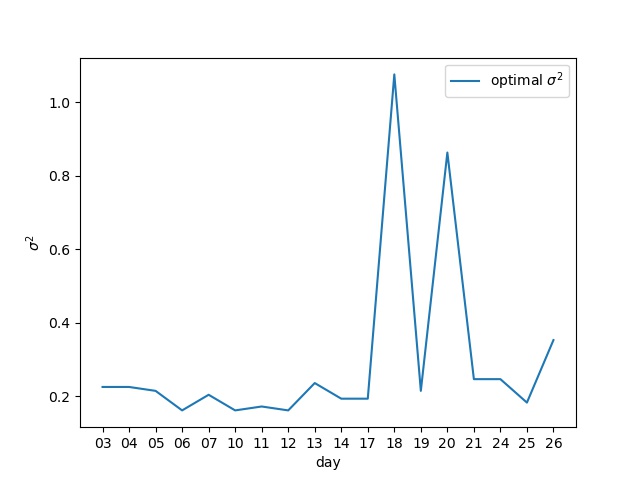}
    } & {}
  \end{tabular}
  \caption{MLE of $(\alpha^2,\beta,\sigma^2)$ for INTC, across the estimation days.}
    \label{fig:MLE.INTC}
\end{center}
\end{figure}

\begin{figure}
\begin{center}
  \begin{tabular} {cc}
    {
    \includegraphics[width = 0.45\textwidth]{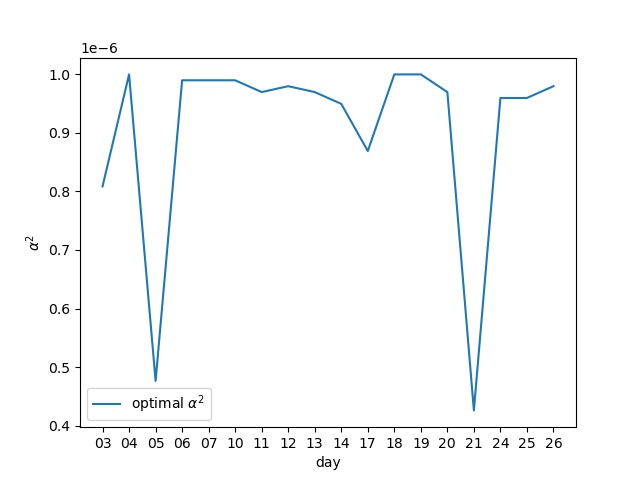}
    } & {
   \includegraphics[width = 0.45\textwidth]{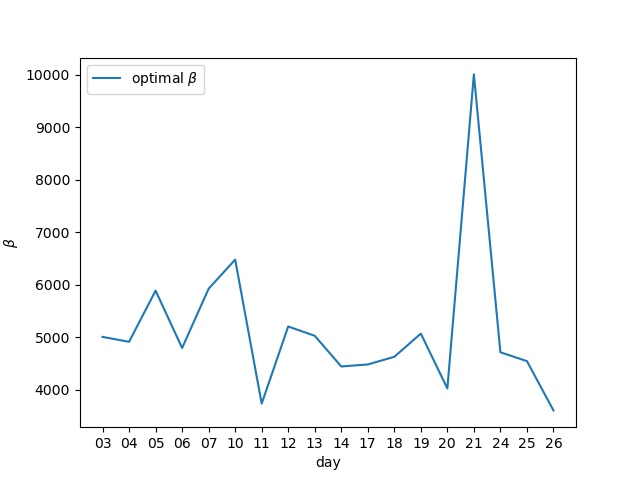}
    }\\
    {
    \includegraphics[width = 0.45\textwidth]{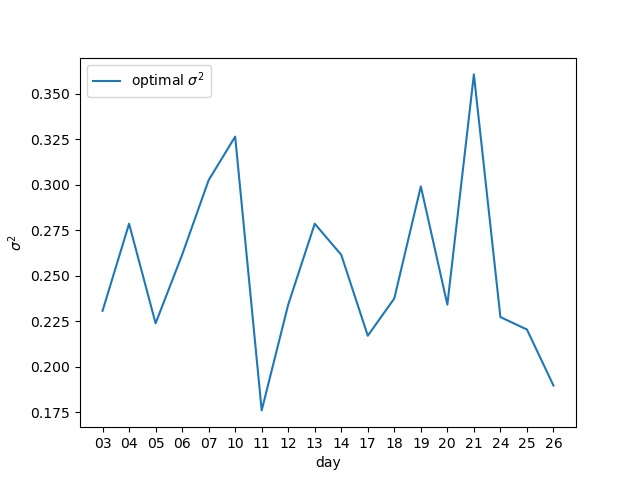}
    } & {}
  \end{tabular}
  \caption{MLE of $(\alpha^2,\beta,\sigma^2)$ for MSFT, across the estimation days.}
    \label{fig:MLE.MSFT}
\end{center}
\end{figure}

\begin{figure}
\begin{center}
  \begin{tabular} {cc}
    {
    \includegraphics[width = 0.45\textwidth]{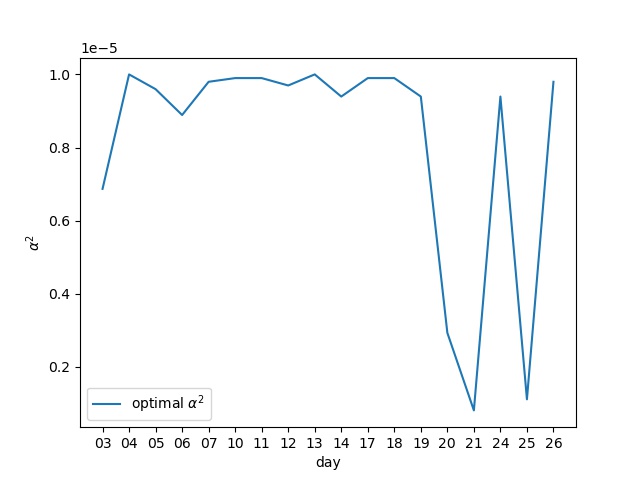}
    } & {
   \includegraphics[width = 0.45\textwidth]{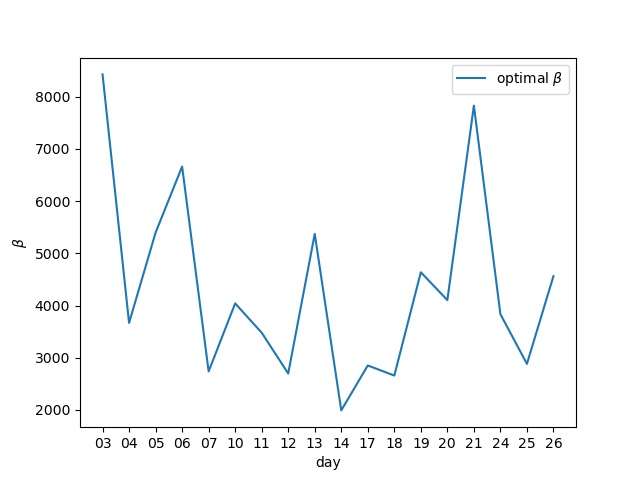}
    }\\
    {
    \includegraphics[width = 0.45\textwidth]{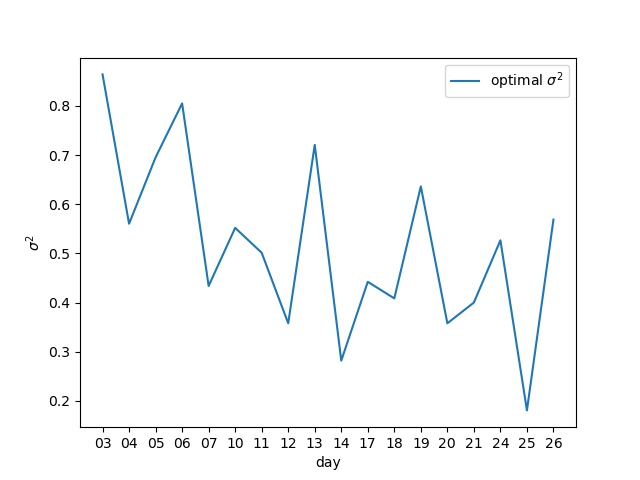}
    } & {}
  \end{tabular}
  \caption{MLE of $(\alpha^2,\beta,\sigma^2)$ for MU, across the estimation days.}
    \label{fig:MLE.MU}
\end{center}
\end{figure}

\begin{figure}
\begin{center}
  \begin{tabular} {cc}
    {
    \includegraphics[width = 0.45\textwidth]{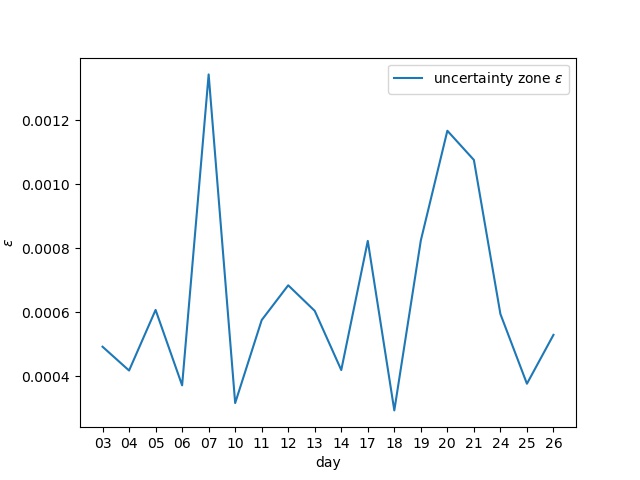}
    } & {
   \includegraphics[width = 0.45\textwidth]{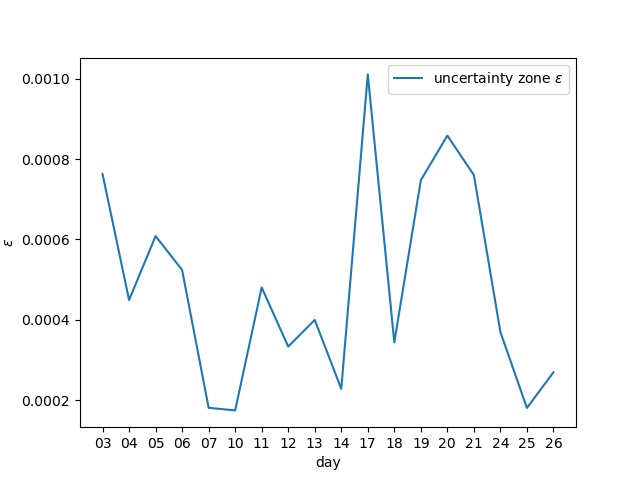}
    }\\
    {
    \includegraphics[width = 0.45\textwidth]{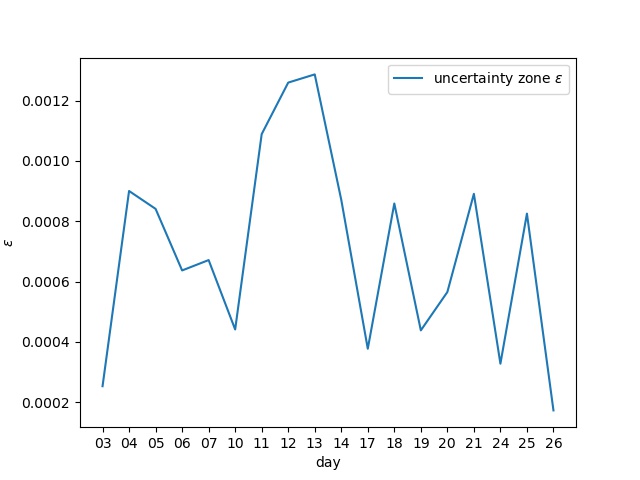}
    } & {}
  \end{tabular}
  \caption{Estimated values of $\epsilon$ for INTC (upper-left), MSFT (upper-right) and MU (lower-left), across the estimation days.}
    \label{fig:epsilon}
\end{center}
\end{figure}

\begin{figure}
\begin{center}
  \begin{tabular} {cc}
    {
    \includegraphics[width = 0.45\textwidth]{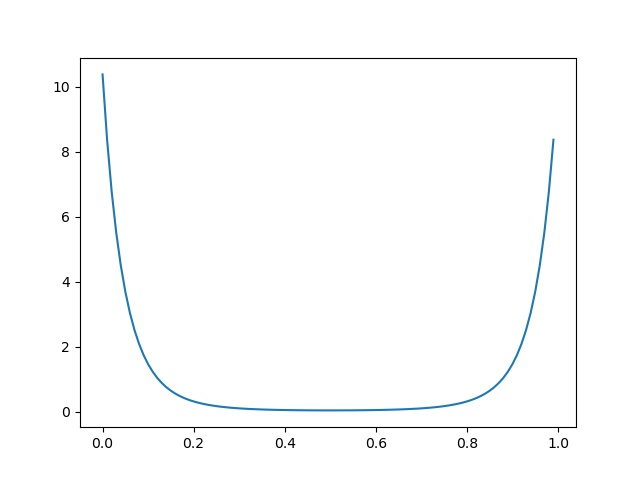}
    } & {
   \includegraphics[width = 0.45\textwidth]{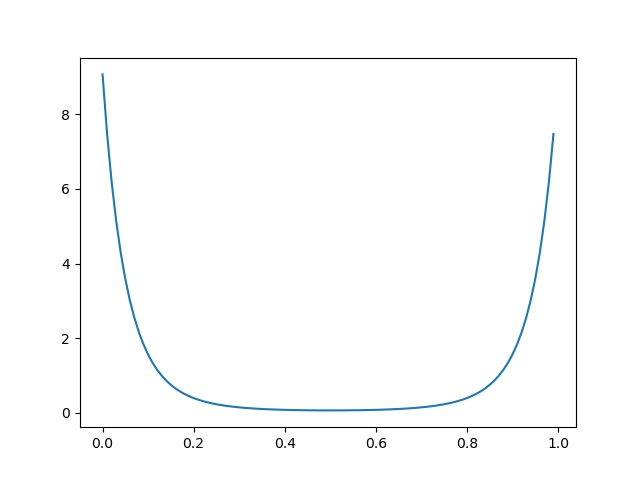}
    }\\
    {
    \includegraphics[width = 0.45\textwidth]{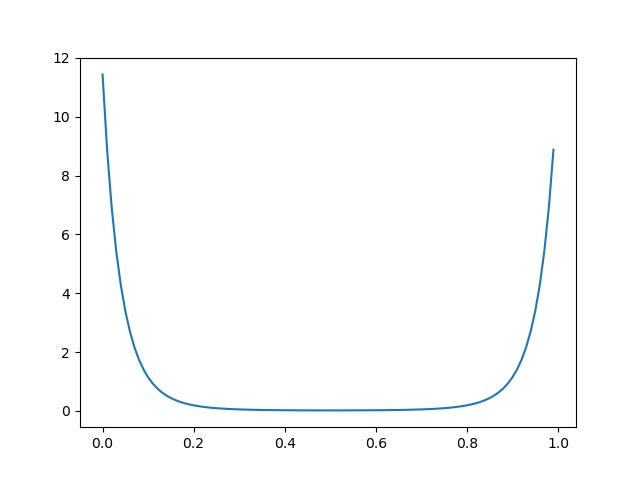}
    } & {}
  \end{tabular}
  \caption{The plots of the stationary density $\chi$ corresponding to the parameters estimated on Nov 3, 2014, for INTC (upper-left), MSFT (upper-right) and MU (lower-left).}
    \label{fig:chi}
\end{center}
\end{figure}

\begin{figure}
\begin{center}
  \begin{tabular} {cc}
    {
    \includegraphics[width = 0.45\textwidth]{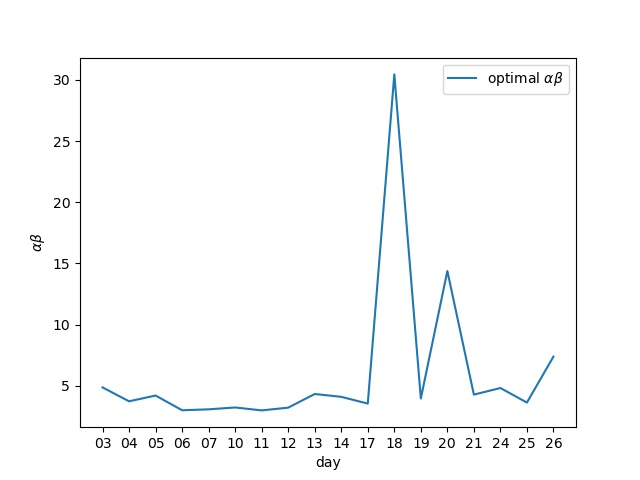}
    } & {
   \includegraphics[width = 0.45\textwidth]{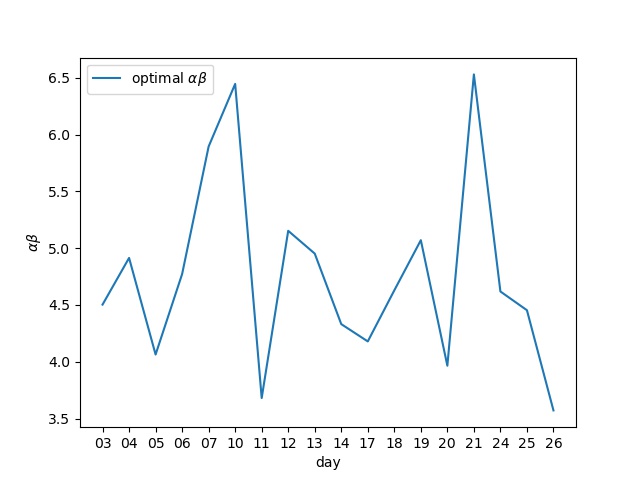}
    }\\
    {
    \includegraphics[width = 0.45\textwidth]{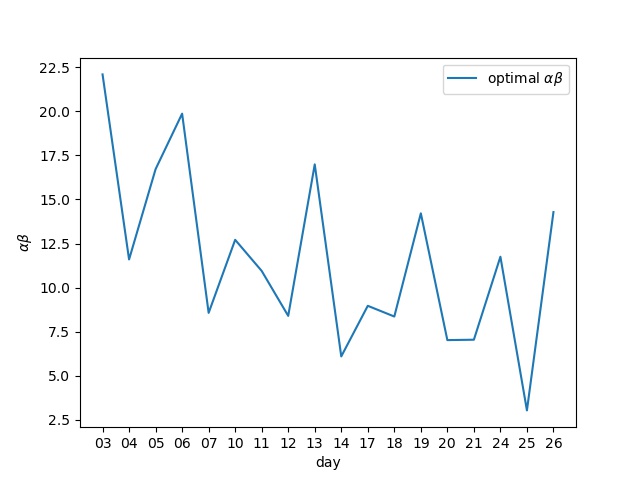}
    } & {}
  \end{tabular}
  \caption{The plots of estimated $\alpha\beta$ for INTC (upper-left), MSFT (upper-right) and MU (lower-left), across the estimation days.}
    \label{fig:alpha.beta}
\end{center}
\end{figure}

\bibliographystyle{amsalpha}
\bibliography{MLE}

\end{document}